\documentclass[11pt]{article}%
\usepackage{amsfonts}
\usepackage{mathrsfs}
\usepackage{amsmath}
\usepackage{amssymb}
\usepackage{graphicx}
\usepackage{latexsym}
\usepackage{indentfirst}%
\usepackage{color,todonotes,ulem} %yy
\hyphenpenalty=1500  \numberwithin{equation}{section}
\newtheorem{thm}{Theorem}[section]
\newtheorem{lema}[thm]{Lemma}
\newtheorem{prop}[thm]{Proposition}
\newtheorem{rmk}[thm]{Remark}

\title%[Shock profiles for full Navier--Stokes--Poisson equations]
{Existence and stability of shock profiles for a one-dimensional full compressible Navier--Stokes--Poisson equations}
\author{\sc Yeping Li$^{1}$ and Yuan Yuan$^{2}$\thanks{Corresponding author. {\it E-mail address:
yyuan2102@m.scnu.edu.cn}}\\
$^1${\footnotesize \it School of Mathematics and Statistics, Nantong
University,
Nantong 226019, P.R. China.}\\
$^2${\footnotesize \it School of Mathematical Sciences, South China
Normal University, Guangzhou 510631,  P. R. China.}}

\date{}

\begin{document}

\maketitle

\noindent{\bf Abstract:}  This paper studies the existence and asymptotic stability of traveling shock waves for the one-dimensional full compressible Navier--Stokes--Poisson equations, which model viscous heat-conducting ions in a self-consistent electrostatic field. We first prove the existence and uniqueness of a smooth shock profile of small amplitude by the center manifold theorem, and then establish its large-time asymptotic stability under suitably small smooth perturbations. The proof is based on an energy estimate for a specially constructed integrated system derived from the full compressible Navier--Stokes--Poisson equations. To carry out this estimate, we introduce suitable anti-derivative variables that incorporate the effects of the self-consistent electrostatic potential and heat conduction, thereby reformulating the original system as a dissipative integrated one. The subsequent energy estimates are completed by carefully handling the induced couplings.

\

\noindent{\bf Keywords:} Full compressible Navier-Stokes-Poisson
equation, Shock profile, Stability, Integrated system, Energy method.

\

\noindent{\bf AMS subject classifications:} 35B35, 35B40, 76N15.

\section{Introduction}
This article is concerned with the one-dimensional full compressible
Navier-Stokes-Poisson (denoted as NSP in the sequel) system for
viscous heat-conductivity ions motion, which reads as
\begin{eqnarray}
\left\{
\begin{array}{lcr}
\rho_t+(\rho
u)_x=0,\\
(\rho u)_t+(\rho u^2+P)_x=\mu
u_{xx}-\rho\phi_x,\\
(\rho(e+\frac{1}{2}u^2))_t+(\rho
u(e+\frac{1}{2}u^2)+Pu)_x=(\kappa\theta_x+\mu uu_x)_x-\rho
u\phi_x,\\
-\lambda^2\phi_{xx}=\rho-e^{\phi}.
\end{array}\label{model11}\right.
\end{eqnarray}
where $t\geq0$ is the time variable and $x\in\mathbb{R}$ is spatial
variable. The functions $\rho(t,x)>0, u(t,x),\theta(t,x)>0$ and
$\phi=\phi(t,x)$ stand for the density, the velocity, the absolute
temperature and the electrostatic potential, respectively. The
positive constants $\nu,\kappa$ and $\lambda$ are the viscosity
coefficients of the fluid, the heat conductivity coefficient and the
Debye length, respectively.  $P$ is the pressure, and $e$ is the
internal energy.

\vskip .1in The compressible NSP system has several physical applications. In the ion model considered here, the electron density is determined by the electrostatic potential through the relation $\rho_e=e^{\phi}$. Related NSP models are used for charged-particle transport in semiconductor devices and plasmas, and also arise in models of self-gravitating viscous gases. If the ions are fixed, the corresponding NSP system describes the motion of electrons. If heat conduction is neglected, one obtains the isentropic compressible NSP system, while a two-fluid NSP model is used when both ions and electrons are treated dynamically. We refer to \cite{C2,D,J,SM} for further physical background. In this paper, we consider an ideal polytropic gas with
\begin{equation}
	P=R\rho \theta,\quad e=\frac{R}{\gamma-1}\theta,\label{relation11}
\end{equation}
where $R>0$ is the gas constant and $\gamma>1$ is the adiabatic exponent.

\vskip .1in 
The existence theory for compressible NSP equations has been extensively studied in many settings. We refer to \cite{CHS,DWYZ,HL,LMZ,LLZ,LXZ,SX1,TYZZ} for strong or classical solutions and to \cite{D2,DF,KS} for weak solutions. Since the present paper concerns large-time behavior, we next recall results that are more directly related to asymptotic dynamics. For the NSP system describing electrons (with fixed ions) near a non-vacuum constant state, Li, Matsumura and Zhang \cite{LMZ} obtained global existence and algebraic decay for the three-dimensional isentropic system. Optimal decay estimates were later derived by Li and Zhang \cite{LZ1}, while Wang \cite{W} obtained the optimal decay by energy estimates. Pointwise estimates based on the Green function in $\mathbb{R}^n(n \geq3)$ were established by Wang and Wu \cite{WW}, and further decay results in higher dimensions were obtained in \cite{WW1}. Related results for the full NSP system of electrons and the bipolar NSP system can be found in \cite{LTZ,TX,WW2,ZLZ}.

\vskip .1in There are also substantial developments on the stability and large-time behavior around nonlinear wave patterns for the compressible NSP system, such as the stationary wave, the viscous shock wave, the rarefaction wave and the contact discontinuity. 
Duan and Liu \cite{DL} obtained the stability of rarefaction waves for the one-dimensional one-fluid isentropic NSP system, and Duan, Liu and Zhang \cite{DLZ} further established the stability of treated viscous shock waves for the same ion model. 
The stability of rarefaction waves with large initial perturbations were proved by Zhang, Zhao and Zhao \cite{ZZZ,Z1} recently. Shock profiles and rarefaction waves under space-periodic perturbations were studied in \cite{LMY}, while Kang, Kwon and Shim \cite{KKS} established stability of isothermal shock profiles by the $a$-contraction method; see also \cite{S} for a shock--rarefaction composite wave. 
Li, Luo and Wu \cite{LLW} and Wu \cite{Wuxiaochun} considered the time-asymptotically nonlinear stability of the planar rarefaction wave and the planar shock wave of the initial value problem for the three-dimensional isentropic compressible NSP system for the ions, respectively.
For the out-flow problem of the compressible NSP system, the existence and the stability of stationary solutions to the system of electrons were shown by Hong and Kim \cite{HK}, and Wang, Zhang and Zhang \cite{WZZ}, and Li and Zhu \cite{LZ} considered a superposition of a stationary solution and a rarefaction wave to the system of ions.

\vskip .1in However, for the full compressible NSP equations for ions, results on the large-time behavior toward hyperbolic waves are much more limited. To the best of our knowledge, apart from the independent work of Duan and Li \cite{Duan.L2026} discussed below, there are four closely related results, and \textit{none concerns shock profiles}. Hong \cite{H} studied the existence and stability of stationary solutions for an inflow problem of the full compressible NSP system for electrons. Liu, Yin and Zhu \cite{LYZ} treated a contact discontinuity, and Ruan, Yin and Zhu \cite{RYZ} studied a superposition of a rarefaction wave and a contact discontinuity for the ion system with free boundary. Li, Chen and Chen \cite{LCC} considered a planar rarefaction wave for the three-dimensional full compressible NSP system. 
In an independent work appearing at essentially the same time, Duan and Li \cite{Duan.L2026} also established the existence of small-amplitude shock profiles by the center manifold method. Their profile construction, like ours, exploits the conservative structure induced by the Poisson equation to reduce the traveling-wave equations to a finite-dimensional first-order system. The approaches to nonlinear stability, however, are substantially different: they employ the $a$-contraction method with shifts and relative entropy without imposing a zero-mass condition, whereas our analysis is based on a specially constructed anti-derivative system and energy estimates under the zero-integral condition.
Related results for two-fluid compressible NSP systems can be found in \cite{CGYZ,DLYZ,DY,HSW,YZZ}.

\vskip .1in The main purpose of this paper is to study the existence
and the asymptotic stability of shock profiles of the full
compressible NSP system (\ref{model11}).%for viscous and
%heat-conductivity ions motion. That is, we consider the effect of
%the variable temperature for the full compressible NSP system with
%the additional evolution equation of the temperature function, and
%we are going to study the asymptotic stability of traveling waves
%with shock profile of the one-dimensional full NSP equations .
Now, we prescribe the initial values to the model (\ref{model11}) as
follows:
\begin{eqnarray}
(\rho,u,\theta)(0,x)=(\rho_0,u_0,\theta_0)(x),\label{initial11}
\end{eqnarray}
with
\begin{eqnarray}
\lim_{x\rightarrow\pm\infty}(\rho_0,u_0,\theta_0)(x)=(\rho_\pm,
u_\pm, \theta_\pm),~~\rho_\pm>0,\theta_\pm>0.\label{initial12}
\end{eqnarray}
The far-field data of $\phi$ are given by
\begin{eqnarray}
\lim\limits_{x\to\pm\infty}\phi(t,x)=\phi_\pm\label{initial13}
\end{eqnarray}
with the \textit{quasi-neutral condition} %$\phi_\pm=\ln\rho_\pm$, namely, $\rho_\pm=e^{\phi_\pm}$ at $x=\pm\infty$.
\begin{equation}
    \label{quasi-neutral}
    \phi_\pm=\ln\rho_\pm.
\end{equation}

For convenience, by setting $v=\frac1\rho,v_\pm=\frac{1}{\rho_\pm}$ and
$p=R\frac{\theta}{v}$, we can formulate the one-dimensional full
compressible NSP system (\ref{model11}) in the Lagrangian
coordinates:
\begin{eqnarray}
\left\{
\begin{array}{lcr}
v_t-u_x=0, \\
u_t+p_x=\mu(\frac{u_x}{v})_x-\frac{\phi_x}{v}, \\
(e+\frac{1}{2}u^2)_t+(pu)_x=(\frac{\kappa}{v}\theta_x+\frac{\mu}{v}uu_x)_x-u\frac{\phi_x}{v}, \\
-\lambda^2(\frac{\phi_x}{v})_x=1-ve^\phi,
\end{array}\right.\label{model12}
\end{eqnarray}
with the corresponding initial data and the far-field states
\begin{eqnarray}
\left\{
\begin{array}{lcr}
(v,u,\theta)(0,x)=(v_0,u_0,\theta_0)(x)\rightarrow(v_\pm,u_\pm,\theta_\pm)~{\rm
as}~x\rightarrow\pm\infty,\\
\lim\limits_{x\to\pm\infty}\phi(t,x)=\phi_\pm.\end{array}\right.\label{initial14}
\end{eqnarray}

The existence and dynamical stability of viscous shock waves are basic problems in conservation laws and viscous fluid equations. Gilbarg \cite{G1} constructed shock profiles for the one-dimensional compressible Navier--Stokes equations and studied their structure for small viscosity and heat conductivity. Conley and Smoller \cite{CS} used a topological method to construct weak shock profiles for viscous conservation laws. Majda and Pego \cite{MP} later connected shock admissibility with the structure of the viscosity matrix and constructed weak shock profiles for a broad class of viscous conservation laws by the center manifold theorem; see also Dafermos \cite{D0} for a detailed account of the shock structure problem. For dynamical stability, Matsumura and Nishihara \cite{MN} treated the isentropic Navier--Stokes equations, and Kawashima and Matsumura \cite{KM} extended the analysis to the heat-conductive case. Further results for general viscous conservation laws can be found in \cite{G2,LZ2,LZ3,MZ,P,SX}.

Motivated by \cite{DLZ,KM,MP,MN}, we are going to study the
existence and the asymptotic stability of traveling shock profiles of the full
compressible NSP system (\ref{model12}) with (\ref{initial14}).
General speaking, the large time behavior of solutions to the
one-dimensional Cauchy problem (\ref{model12}) and (\ref{initial14})
is determined by the far-field data $(v_\pm, u_\pm,
\theta_\pm,\phi_\pm)$. In the paper, we are concerned with only the
shock profile under the quasi-neutral condition
$\phi_\pm=\ln\frac1{v_\pm}$. We first recall the definition of shock
profiles briefly. Let $(v,u,\theta,\phi)$ be a smooth travelling
wave solution of (\ref{model12}) which depends only on the variable
$y=x-st$ with the wave speed $s$ to be determined later and connects
the far-fields $(v_\pm, u_\pm, \theta_\pm,\phi_\pm)$ at
$x=\pm\infty$ respectively. The profile is thereby governed by the
following system of ODEs:
\begin{eqnarray}
\left\{
\begin{array}{lcr}
-s\frac{\mathrm{d}v}{\mathrm{d}y}-\frac{\mathrm{d}u}{\mathrm{d}y}=0,\\
-s\frac{\mathrm{d}u}{\mathrm{d}y}+\frac{\mathrm{d}p}{\mathrm{d}y}
=\mu\frac{\mathrm{d}}{\mathrm{d}y}(\frac{1}{v}\frac{\mathrm{d}u}{\mathrm{d}y})-\frac1v\frac{\mathrm{d}\phi}{\mathrm{d}y},\\
-s\frac{\mathrm{d}(e+\frac{1}{2}u^2)}{\mathrm{d}y}+\frac{\mathrm{d}(pu)}{\mathrm{d}y}=\frac{\mathrm{d}}{\mathrm{d}y}(\frac\kappa
v\frac{\mathrm{d}\theta}{\mathrm{d}y}+\frac{\mu u}
{v}\frac{\mathrm{d}u}{\mathrm{d}y})-\frac uv\frac{\mathrm{d}\phi}{\mathrm{d}y},\\
-\lambda^2\frac{\mathrm{d}}{\mathrm{d}y}(\frac{1}{v}\frac{\mathrm{d}\phi}{\mathrm{d}y})=1-ve^{\phi},
\end{array}\right.\label{model13}
\end{eqnarray}
with
\begin{eqnarray}
\lim\limits_{y\to\pm\infty}(v,u,\theta,\phi)(y)=(v_\pm,u_\pm,\theta_\pm,\phi_\pm).\label{initial15}
\end{eqnarray}
Note that by using the fourth and the first equation of
(\ref{model13}), we see
\begin{eqnarray*}
-\frac1v\frac{\mathrm{d}\phi}{\mathrm{d}y}
=\frac{\mathrm{d}}{\mathrm{d}y}\big(\frac{\lambda^2}{2}(\frac{1}{v}\frac{\mathrm{d}\phi}{\mathrm{d}y})^2-\frac1v
-\frac{\lambda^2}v\frac{\mathrm{d}}{\mathrm{d}y}(\frac{1}{v}\frac{\mathrm{d}\phi}{\mathrm{d}y})\big),
\end{eqnarray*}
and
\begin{eqnarray*}
-\frac
uv\frac{\mathrm{d}\phi}{\mathrm{d}y}=\frac{\mathrm{d}}{\mathrm{d}y}\big(\frac{\lambda^2}{2}(\frac{1}{v}\frac{\mathrm{d}\phi}{\mathrm{d}y})^2u
-ue^\phi-sve^\phi+s\phi
+\frac{\lambda^2s}{2v}(\frac{\mathrm{d}\phi}{\mathrm{d}y})^2\big).
\end{eqnarray*}
Then we can rewrite  system (\ref{model13}) as
\begin{eqnarray}
\left\{
\begin{array}{lcr}
-s\frac{\mathrm{d}v}{\mathrm{d}y}-\frac{\mathrm{d}u}{\mathrm{d}y}=0,\\
-s\frac{\mathrm{d}u}{\mathrm{d}y}+\frac{\mathrm{d}}{\mathrm{d}y}(p+\frac1v)
=\frac{\mathrm{d}}{\mathrm{d}y}\big(\frac{\mu}{v}\frac{\mathrm{d}u}{\mathrm{d}y}
+\frac{\lambda^2}{2}(\frac{1}{v}\frac{\mathrm{d}\phi}{\mathrm{d}y})^2
-\frac{\lambda^2}v\frac{\mathrm{d}}{\mathrm{d}y}(\frac{1}{v}\frac{\mathrm{d}\phi}{\mathrm{d}y})\big),\\
-s\frac{\mathrm{d}}{\mathrm{d}y}\big(e+\frac{1}{2}u^2-ve^\phi+\phi
+\frac{\lambda^2}{2v}(\frac{\mathrm{d}\phi}{\mathrm{d}y})^2\big)
+\frac{\mathrm{d}}{\mathrm{d}y}\big((p+\frac1v)u\big)\\
~~~~~~~~~~~~~~~~~~=\frac{\mathrm{d}}{\mathrm{d}y}\big(\frac\kappa
v\frac{\mathrm{d}\theta}{\mathrm{d}y}+\frac{\mu u}
{v}\frac{\mathrm{d}u}{\mathrm{d}y}+\frac{\lambda^2}{2}(\frac{1}{v}\frac{\mathrm{d}\phi}{\mathrm{d}y})^2u\big),\\
-\lambda^2\frac{\mathrm{d}}{\mathrm{d}y}(\frac{\frac{\mathrm{d}\phi}{\mathrm{d}y}}{v})=1-ve^{\phi},
\end{array}\right.\label{model14}
\end{eqnarray}
which consists of three conservation laws together with the Poisson
equation for the electric potential. By integrating the first three
equations in (\ref{model14}) over $-\infty<y<+\infty$, then using
$\phi_\pm=-\ln v_\pm$, and further assuming that all the first-order
derivative terms vanish as $y=\pm\infty$, we obtain the
Rankine-Hugoniot (R-H) condition as follows:
\begin{eqnarray}
\left\{
\begin{array}{lcr}
-s(v_+-v_-)-(u_+-u_-)=0, \\
-s(u_+-u_-)+\frac{R\theta_++1}{v_+}-\frac{R\theta_-+1}{v_-}=0, \\
-s(\frac{R}{\gamma-1}(\theta_+-\theta_-)+\frac{1}{2}(u^2_+-u^2_-)
+\ln\frac1{v_+}-\ln\frac1{v_-})+\frac{(R\theta_++1)u_+}{v_+}-\frac{(R\theta_-+1)u_-}{v_-}=0.
\end{array}\right.\label{relation12}
\end{eqnarray}
We point out that the R-H condition (\ref{relation12}) is the same
as the one for the following full quasi-neutral type Euler system:
\begin{eqnarray}
\left\{
\begin{array}{lcr}
v_t-u_x=0, \\
u_t+(p+\frac1v)_x=0, \\
(e+\frac12u^2+\ln\frac1v)_t+((p+\frac1v)u)_x=0,
\end{array}\right.\label{model15}
\end{eqnarray}
which can be formally obtained by letting $\mu=\kappa=\lambda=0$ in
(\ref{model12}). Moreover, we assume that Lax's condition
\begin{eqnarray}
{\rm either}~\lambda_1^+<s<\lambda_1^-~{\rm
or}~\lambda_3^+<s<\lambda_3^-, \label{relation13}
\end{eqnarray}
where 
\begin{equation*}
\lambda_1^\pm=-\frac{\sqrt{R\gamma\theta^\pm+1}}{v^\pm},
~\lambda_2^\pm=0,~\lambda_3^\pm=\frac{\sqrt{R\gamma\theta^\pm+1}}{v^\pm} 
\end{equation*}
is three eigenvalues of (\ref{model15}). Note that
(\ref{relation13}) is equivalent to the shock profile with
compressibility, namely,
\begin{eqnarray*}
u_+<u_-,~{\rm or}~s(v_+-v_-)>0.
\end{eqnarray*}

In this paper, we focus on the $3$-shock profile with
$0<\lambda^+_3<s<\lambda^-_3$ and $v_+>v_-$. Now we state the result
about the existence of shock profile of (\ref{model12}) with fixed
constants $\mu>0, \kappa>0$ and $\lambda>0$ by the following
Theorem.
\begin{thm}
For given data $(v_-,u_-,\theta_-)$ with $v_-,\theta_->0$, there
exists a  positive constant $\delta_0$ such that if
$(v_+,u_+,\theta_+)$ satisfies (\ref{relation12}) and
$0<v_+-v_-<\delta_0$, then the problem (\ref{model13}) or
equivalently (\ref{model14}) has a unique (up to a shift) smooth
traveling wave solution of the form $(\bar{v}, \bar{u},
\bar{\theta}, \bar{\phi})(x-st)$ connecting the far fields $(v_\pm,
u_\pm, \phi_\pm, \theta_\pm)$ with \eqref{quasi-neutral}.
Moreover, there exist positive constants $ c, C, a$ and
$C_k(k=0,1,2,\cdots)$, such that for each $t>0$ and
$x\in\mathbb{R}$, it holds that
\begin{eqnarray}
s\bar{v}_x=-\bar{u}_x>0,~
-C\bar{v}_x\le\bar{\phi}_x,\bar\theta_x\le-c\bar{v}_x<0,\label{inequality11}
\end{eqnarray} and
\begin{eqnarray}
\big|\frac{\mathrm{d}^k}{\mathrm{d}x^k}(\bar{v}-v_\pm,
\bar{u}-u_\pm, \bar{\theta}-\theta_\pm, \bar{\phi}-\phi_\pm)\big|
\le C_k|v_+-v_-|^{k+1}e^{-a|v_+-v_-||x-st|}\label{inequality12}
\end{eqnarray}
for $x-st\lessgtr0$ and $k=0,1,2,\cdots$.
\end{thm}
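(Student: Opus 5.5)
We will reduce the profile system (\ref{model14}) to a finite-dimensional first-order ODE system and apply the center manifold theorem, in the spirit of Majda--Pego \cite{MP}. Integrating the three conservation laws in (\ref{model14}) from $-\infty$ to $y$ and using (\ref{relation12}) and $\phi_\pm=-\ln v_\pm$ gives the following. The first equation yields $u=u_--s(v-v_-)$, which eliminates $u$. The second equation gives
\begin{equation*}
\frac{\mu}{v}u'+\frac{\lambda^2}{2}\Big(\frac{\phi'}{v}\Big)^2-\frac{\lambda^2}{v}\Big(\frac{\phi'}{v}\Big)'=-s(u-u_-)+p+\frac1v-p_--\frac1{v_-}.
\end{equation*}
The third equation gives a similar first-order relation for $\kappa\theta'/v$, whose right side also contains $\phi$, $ve^\phi$ and $(\phi')^2$. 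Together with the Poisson equation $-\lambda^2(\phi'/v)'=1-ve^{\phi}$, this gives an autonomous first-order system for $U=(v,\theta,\phi,w)$ with $w=\phi'/v$. Here $\mu u'=-s\mu v'$, the term $(\phi'/v)'$ is replaced through the Poisson equation, and $w'$ is given by the Poisson equation. The state $U_-=(v_-,\theta_-,-\ln v_-,0)$ is an equilibrium, and so is $U_+$ by (\ref{relation12}).

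Next we linearize at $U_-$ with the shock strength $\varepsilon=v_+-v_-$ as a parameter, treating the R-H curve through $(v_-,u_-,\theta_-)$ as parametrized by $\varepsilon$ with $s=s(\varepsilon)\to\lambda_3^-$. The $(\phi,w)$ block is hyperbolic: it has eigenvalues $\approx\pm\sqrt{e^{\phi_-}/\lambda^2}$ up to coupling, i.e.\ one positive and one negative. For the $(v,\theta)$ block we will use the quasi-neutral reduction. Setting $\phi\approx-\ln v$ on the slow manifold, which solves $1=ve^\phi$ at leading order, the $(v,\theta)$ equations become those of the heat-conducting Navier--Stokes profile with the modified pressure $p+1/v$ and energy $e+\ln(1/v)$, exactly (\ref{model15}). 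At $\varepsilon=0$, $s=\lambda_3^-$, its linearization has one zero eigenvalue, because of genuine nonlinearity of the third field of (\ref{model15}), and one nonzero eigenvalue (as in \cite{G1,KM}), since $\mu,\kappa>0$. So the full linearization at $U_-$, $\varepsilon=0$, has a one-dimensional center subspace and three hyperbolic directions. The main technical obstacle is to verify this eigenvalue count and the nondegeneracy rigorously for the coupled $4\times4$ matrix. We will compute its characteristic polynomial and check that $0$ is a simple root exactly when $s=\lambda_3^-$, using that $\lambda_3^2=(R\gamma\theta+1)/v^2$ comes from the quasi-neutral Euler system. We also check that no other root lies on the imaginary axis; the Debye coupling only shifts the hyperbolic roots, which will be checked by a continuity/sign argument in $\lambda$.

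We then apply the center manifold theorem to the extended system with $\varepsilon'=0$. This gives a locally invariant two-dimensional manifold (one state direction plus $\varepsilon$) containing all small bounded orbits. On it the reduced flow is a scalar ODE $z'=g(z,\varepsilon)$ vanishing at the two equilibria corresponding to $U_\pm$. By genuine nonlinearity, $g=c_0(z-z_-)(z-z_+)(1+O(\varepsilon))$ with $c_0\neq0$, so there is a unique heteroclinic orbit up to shift. Its direction (from $U_-$ to $U_+$) is consistent with Lax's condition $\lambda_3^+<s<\lambda_3^-$. Uniqueness holds because any profile is a small bounded orbit, hence lies on the center manifold.

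Finally we derive the bounds. The reduced scalar equation gives $|z-z_\pm|\le C\varepsilon e^{-a\varepsilon|y|}$ and $z'$ of one sign with $|z'|\sim\varepsilon^2$. Differentiating the equation repeatedly gives $|d^kz|\le C_k\varepsilon^{k+1}e^{-a\varepsilon|y|}$. Since $(v,\theta,\phi,w)$ are smooth functions of $(z,\varepsilon)$ on the manifold, (\ref{inequality12}) follows. Monotonicity (\ref{inequality11}) follows from the same argument. The tangent of the center direction at $\varepsilon=0$ is the right eigenvector $r_3$ of (\ref{model15}) expressed in $(v,\theta,\phi)$ components with $\phi\approx-\ln v$. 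Its $v$-component is positive and its $\theta$- and $\phi$-components are negative multiples of it ($\phi_v\approx-1/v$, $\theta_v<0$ along the $3$-wave). Therefore $\bar\phi_x,\bar\theta_x$ are comparable to $-\bar v_x$ for $\varepsilon$ small, and $s\bar v_x=-\bar u_x$ comes from the integrated first equation.
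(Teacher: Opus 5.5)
Your proposal follows the same route as the paper. Both integrate the conservative form of the profile system to an autonomous first-order system in four unknowns, append the shock parameter, and apply the Majda--Pego center manifold reduction. Both then read off monotonicity and the bounds $C_k\varepsilon^{k+1}e^{-a\varepsilon|y|}$ from the scalar reduced equation. The differences are cosmetic:
\begin{itemize}
\item The paper works in Eulerian variables $(\rho,\theta,e^{\phi},\phi_\xi)$ with the speed $\tau$ appended, and appeals to the Eulerian--Lagrangian equivalence of profiles. You stay in Lagrangian variables with $\varepsilon=v_+-v_-$ as parameter.
\item The paper simply exhibits the kernel vectors $(1,\gamma-1,1,0,0)$ and $(0,0,0,0,1)$.
\item Your chain-rule derivation of the higher-derivative bounds on the center manifold is more explicit than the paper's.
\end{itemize}

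The one step you assert rather than prove is the orientation of the reduced flow. Since $g(\cdot,\varepsilon)$ has a fixed sign between its two zeros, some heteroclinic between $z_-$ and $z_+$ always exists. The content of the existence claim is that it runs from $z_-$ to $z_+$, i.e.\ that $g_z(z_-,\varepsilon)>0$. With the non-viscous Poisson coupling this does not follow from the Lax condition alone; in the Majda--Pego framework it is tied to the sign of an effective viscosity in the center direction. The paper settles it by computing $\dot g(0)>0$.

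Within your plan you can close it with the characteristic polynomial you already intend to compute. The center eigenvalue of $dF(U_-;s)$ equals $\det dF(U_-;s)$ divided by the product of the three hyperbolic eigenvalues. That product has a fixed sign near $s=\lambda_3^-$, and $\det dF(U_-;s)$ has a simple zero at $s=\lambda_3^-$. So you only need to check that the resulting sign on the side $s<\lambda_3^-$ is positive. The spectral count you flag as the main obstacle is routine and needs no continuity argument in $\lambda$: at $s=\lambda_3^-$ the characteristic polynomial is $x$ times a cubic whose real part is strictly negative on the imaginary axis. Your block heuristics are not reliable there anyway, because the coupling between $v$ and $(\phi,w)$ is $O(1)$.

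Two smaller corrections:
\begin{itemize}
\item The zero eigenvalue at $\varepsilon=0$ comes from $s=\lambda_3^-$ being a characteristic speed, not from genuine nonlinearity. Genuine nonlinearity is what gives $s'(0)\neq0$, hence $|g_z(z_\pm,\varepsilon)|\sim\varepsilon$ and the decay rate $a\varepsilon$.
\item ``Any profile is a small bounded orbit'' is not automatic. Your argument gives uniqueness only among profiles staying near $U_-$, which is also all the paper's argument provides.
\end{itemize}
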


Next, let us turn to the asymptotic stability of the smooth
traveling shock profile $(\bar{v}, \bar{u}, \bar{\theta},
\bar{\phi})$ obtained in Theorem 1.1. First, assume
\begin{eqnarray}
(v_0-\bar v,u_0-\bar u,\theta_0-\bar\theta)(x)\in
H^1(\mathbb{R}),\label{assumption11}
\end{eqnarray}
and the integrals
\begin{eqnarray}
&&\Phi_0(x)=\int_{-\infty}^{x}(v_0-\bar
v)(y)\mathrm{d}y,~\Psi_0(x)=\int_{-\infty}^{x}(u_0-\bar
u)(y)\mathrm{d}y,\notag\\
&&\tilde{W}_{10}(x)=\int_{-\infty}^{x}\big(\frac{R}{\gamma-1}\theta_0+\frac12u_0^2-\frac{R}{\gamma-1}
\bar\theta-\frac12\bar u^2\big)(y)\mathrm{d}y,\label{initial16}
\end{eqnarray}
exist for any $x\in\mathbb{R}$, and satisfy
\begin{eqnarray}
(\Phi_0,\Psi_0,{W}_{10})(x)\in L^2(\mathbb{R}),\label{assumption12}
\end{eqnarray}
where $W_{10}=\frac{\gamma-1}{R}(\tilde{W}_{10}-\bar u\Psi_0)$. Note
that (\ref{assumption11}) and (\ref{assumption12}) imply that the initial perturbations have zero integrals:
\begin{eqnarray*}
&&\int_{-\infty}^{+\infty}(v_0-\bar
v)(x)\mathrm{d}x=0,~\int_{-\infty}^{+\infty}(u_0-\bar
u)(x)\mathrm{d}x=0,\notag\\
&&\int_{-\infty}^{+\infty}\big(\frac{R}{\gamma-1}\theta_0+\frac12u_0^2-\frac{R}{\gamma-1}
\bar\theta-\frac12\bar u^2\big)(x)\mathrm{d}x=0.
\end{eqnarray*}
The second result of this paper, i.e., the asymptotic stability of
shock profile $(\bar{v}, \bar{u}, \bar{\theta}, \bar{\phi})$ is
stated as follows.
\begin{thm}
Let $(v_\pm, u_\pm,\theta_\pm)$ satisfy (\ref{relation12}) and \eqref{quasi-neutral} holds, and $v_1\leq v_\pm\leq v_2,|u_\pm|\leq
u_1,\theta_1\leq \theta_\pm\leq \theta_2$. We also choose a positive
constant  $\varepsilon_0<\min\{v_1,\theta_1\}$. Let the shock
profile $(\bar v,\bar u,\bar\theta,\bar\phi)$ with the shock speed
$s> 0$ and with $v_+>v_->0$ be obtained as in Theorem 1.1. Assume
the initial data $(v_0,u_0,\theta_0)(x)$ satisfy
(\ref{assumption11}) and (\ref{assumption12}). Then there exist
positive constants $\delta_1(\leq\delta_0)$ and
$\varepsilon_1(\le\varepsilon_0)$ such that if
$|v_+-v_-|\le\delta_1$, and
\begin{eqnarray*}
\|(v_0-\bar v,u_0-\bar u,\theta_0-\bar\theta)\|_1+\|(\Phi_0, \Psi_0,
W_{10})\|\le\varepsilon_1
\end{eqnarray*}
then the problem (\ref{model12}) and (\ref{initial14}) has a unique
global solution $(v,u,\theta,\phi)$ satisfying
\begin{eqnarray*}
&&v-\bar v\in C([0,\infty);H^1(\mathbb{R}))\cap
L^2([0,\infty);H^1(\mathbb{R})),\\
&&u-\bar u,\theta-\bar\theta\in C([0,\infty);H^1(\mathbb{R}))\cap L^2([0,\infty);H^2(\mathbb{R}))\notag\\
&&\phi-\bar \phi\in C([0,\infty);H^2(\mathbb{R}))\cap L^2([0,\infty);H^2(\mathbb{R})),\notag\\
&&\inf_{t\geq0,x\in\mathbb{R}}\{v(t,x),\theta(t,x)\}>0.
\end{eqnarray*}
Moreover, the solution tends to the traveling wave solution in the
maximum norm:
\begin{eqnarray}
\sup_{x\in\mathbb{R}}|(v,u,\theta,\phi)(t,x)-(\bar v,\bar
u,\bar\theta,\bar\phi)(x-st)|\rightarrow0~{\rm
as}~t\rightarrow\infty.\label{inequality13}
\end{eqnarray}
\end{thm}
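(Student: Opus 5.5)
We will follow the anti-derivative approach of Matsumura--Nishihara \cite{MN} and Kawashima--Matsumura \cite{KM}, adapted to the Poisson coupling as in \cite{DLZ}. We work in the moving frame $\xi=x-st$ and set
\begin{equation*}
(\Phi,\Psi)(t,\xi)=\int_{-\infty}^{\xi}(v-\bar v,\,u-\bar u)(t,y)\,\mathrm{d}y .
\end{equation*}
For the energy we use the conserved form in the third equation of (\ref{model14}), which absorbs the electric work term, and define $\tilde W$ as the anti-derivative of the total-energy perturbation. We then take $W=\frac{\gamma-1}{R}(\tilde W-\bar u\Psi)$, so that $W_\xi\approx\theta-\bar\theta$ to leading order; this is consistent with $W_{10}$ in (\ref{assumption12}). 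The electrostatic part is handled by $\sigma=\phi-\bar\phi$. We never integrate $\sigma$; instead we use the Poisson equation to write $\Phi_{\xi}=v-\bar v$ in terms of $\sigma,\sigma_{\xi\xi}$ and $\Phi$, plus quadratic terms. Linearizing $1-ve^\phi$ gives
\begin{equation*}
\Phi_\xi\approx -\bar v^2\sigma+\lambda^2\bar v(\sigma_\xi/\bar v)_\xi+\cdots .
\end{equation*}

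Because the data have zero integrals, the conservation form of (\ref{model14}) yields a closed integrated system for $(\Phi,\Psi,W,\sigma)$. It has the schematic form
\begin{equation*}
\Phi_t-s\Phi_\xi-\Psi_\xi=0,\qquad
\Psi_t-s\Psi_\xi+\Big(\frac{R}{\bar v}W_\xi-\frac{R\bar\theta+1}{\bar v^2}\Phi_\xi\Big)-\frac{\mu}{\bar v}\Psi_{\xi\xi}=N_1+(\text{Poisson terms}),
\end{equation*}
with an analogous equation for $W$ carrying the heat-conduction term $\frac{\kappa}{\bar v}W_{\xi\xi}$. Here $N_i$ are quadratic, and the Poisson terms are $\frac{\lambda^2}{2}(\phi_x/v)^2-\frac{\lambda^2}{v}(\phi_x/v)_x$ minus their profile values. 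The local existence of solutions with $(\Phi,\Psi,W)\in C([0,T];H^2)$ and $\sigma\in C([0,T];H^3)$ is standard, by an iteration that uses elliptic estimates for the Poisson equation. Global existence then follows by continuation from the a priori estimate
\begin{equation*}
\sup_{t\le T}\big(\|(\Phi,\Psi,W)(t)\|_2^2+\|\sigma(t)\|_3^2\big)+\int_0^T\Big(\int|\bar v_\xi|(\Psi^2+W^2)\,\mathrm{d}\xi+\|\Phi_\xi\|_1^2+\|(\Psi_\xi,W_\xi)\|_2^2+\|\sigma\|_3^2\Big)\mathrm{d}t\le C\big(\text{initial norm}\big),
\end{equation*}
which we prove under the a priori smallness $N(T)\le\varepsilon$ together with $\delta=|v_+-v_-|\le\delta_1$.

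The core of the argument is the basic $L^2$ estimate. We multiply the $\Phi$, $\Psi$, $W$ equations by weighted multipliers, following \cite{KM}: $\Psi$, $W$ with weight $\bar\theta^{-1}$, and a multiple of $\Phi$ weighted by $\frac{R\bar\theta+1}{\bar v^2}$, chosen so that the hyperbolic cross terms cancel up to a total derivative. What remains is a good term $\int|\bar v_\xi|(\cdot)$ coming from the compressibility condition (\ref{inequality11}) and from $s>\lambda_3^+$, with the $+1/v$ pressure from the Poisson field included in the sound speed. The Poisson terms are treated by pairing $\Psi$ times the electric term with the Poisson equation. After integration by parts, the product $\Phi_\xi\cdot$(Poisson contribution) produces the positive quantity
\begin{equation*}
\frac{d}{dt}\int\big(\bar v\sigma^2+\lambda^2\sigma_\xi^2/\bar v\big)\,\mathrm{d}\xi ,
\end{equation*}
and an analogous term appears in the energy equation through the $s\phi$ and $ve^\phi$ terms. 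Error terms carrying $\bar v_\xi$ or $\bar\phi_\xi$ are bounded by $C\delta\int|\bar v_\xi|(\cdot)$ or by $C\delta\|\cdot\|^2$ and are absorbed. Controlling $\Phi_\xi$ and $\Phi$ needs an extra estimate: we multiply the $\Psi$ equation by $-\Phi_\xi$ and use the Poisson relation to trade $\Phi_\xi$ for $\sigma$, obtaining $\int_0^T\big(\|\Phi_\xi\|^2+\|\sigma\|_2^2\big)\,\mathrm{d}t$. Higher derivatives follow by differentiating the system and using elliptic regularity for $\sigma$; the $\|\sigma\|_3$ bound in particular comes from this elliptic regularity.

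We expect the main obstacle to be the basic estimate with the three-way coupling among heat conduction, the $W$ variable and the electric field. The difficulty is that the electric terms in the energy equation do not appear naturally in conservative form in $(\Psi,W)$. The multipliers must therefore be tuned so that the cross terms between $\sigma$ and $W_\xi$ either cancel or carry a $\delta$ factor. We would first try to verify that the quadratic form in $(\Phi_\xi,\Psi_\xi,W_\xi,\sigma)$ is positive definite for small $\delta$; if it is not, we would adjust the weight on $W$ by a multiple of $\bar\phi$. Once the a priori estimate holds, we have $\|(v-\bar v,u-\bar u,\theta-\bar\theta)_\xi\|^2\in W^{1,1}(0,\infty)$. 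Combined with Sobolev interpolation, this gives the decay (\ref{inequality13}), and the convergence of $\phi$ follows from the $H^2$ bound on $\sigma$.
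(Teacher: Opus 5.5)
Your overall architecture is the paper's. You take anti-derivatives of $v-\bar v$, $u-\bar u$ and of the \emph{total} energy perturbation, and leave $\sigma$ un-integrated. You extract the $\sigma$-energy from the Poisson part of the momentum equation, add a separate $\Phi_\xi$-estimate coupled to the Poisson equation, then do $H^1$ estimates and the $W^{1,1}(0,\infty)$ decay argument. However, the central construction, the integrated energy variable, is wrong as you define it.

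Once $\tilde W$ contains the electrostatic energy, its density contains $\sigma-(\bar v+\varphi)e^{\bar\phi+\sigma}+\bar v e^{\bar\phi}$. Its linear part is $(1-\bar v e^{\bar\phi})\sigma-e^{\bar\phi}\varphi=-e^{\phi_\pm}\varphi+O(\delta)(|\sigma|+|\varphi|)$, an $O(1)$ multiple of $\Phi_\xi$. Hence your $W=\frac{\gamma-1}{R}(\tilde W-\bar u\Psi)$ satisfies $W_\xi\approx w-\frac{\gamma-1}{R}e^{\phi_\pm}\Phi_\xi$, not $W_\xi\approx\theta-\bar\theta$. In particular your schematic $\Psi$-equation, with $\frac{R}{\bar v}W_\xi-\frac{R\bar\theta+1}{\bar v^2}\Phi_\xi$, does not match your own $W$.

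The heat flux in the integrated energy equation then becomes $\frac{\kappa}{\bar v}\big(W_\xi+\frac{\gamma-1}{R}e^{\phi_\pm}\Phi_\xi\big)_\xi$. Testing with $W$ produces a cross term proportional to $\frac{\kappa}{\bar v}e^{\phi_\pm}\Phi_\xi W_\xi$, with no $\delta$ in front.
\begin{itemize}
\item The basic estimate has no $\Phi_\xi^2$ dissipation, so the quadratic form you intend to check is genuinely indefinite.
\item Borrowing from the $\Phi_\xi$-estimate is circular. That estimate is paid for by $C\int_0^t\|(\Psi_\xi,w)\|^2$, and $w$ again contains $\frac{\gamma-1}{R}e^{\phi_\pm}\Phi_\xi$.
\item Even the hyperbolic symmetrization degenerates in your variables. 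A diagonal symmetrizer needs $k_1(\bar v)>(\gamma-1)e^{\phi_\pm}$, i.e.\ roughly $\gamma<2+R\theta_\pm$.
\end{itemize}

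The missing idea, which is the paper's main new ingredient, is the shift $W=\frac{\gamma-1}{R}(\tilde W-\bar u\Psi+e^{\phi_\pm}\Phi)$ of \eqref{relation35}. It removes the $-e^{\bar\phi}\varphi$ part of the electrostatic energy, with the following consequences:
\begin{itemize}
\item $W_\xi$ is the temperature perturbation to leading order.
\item The heat flux is $\frac{\kappa}{\bar v}W_{\xi\xi}$ up to $\delta$-small and quadratic terms.
\item The electrostatic terms of the $W$-equation ($F_{21}$, $F_{22}$) become perturbative.
\item The coefficient of $\Psi_\xi$ in that equation becomes $k_1(\bar v)-e^{\phi_\pm}>0$, which is essentially the pressure $R\theta/v$.
\end{itemize}
Your fallback of changing the weight on $W$ by a multiple of $\bar\phi$ cannot fix this. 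The offending coupling is between $\Phi$ and $W$, not between $\sigma$ and $W$; the $\sigma$--$W$ couplings already carry $\delta$ or are quadratic.

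The multipliers you propose also need to change. Because $\Phi$ itself is never dissipated, a weight $a$ on $\Phi$ generates $\frac{s}{2}a_\xi\Phi^2+a_\xi\Phi\Psi$, and nothing absorbs these unless the signs are right. Your $a=\frac{R\bar\theta+1}{\bar v^2}$ has $a_\xi<0$ (since $\bar v_\xi>0>\bar\theta_\xi$), which gives a growth term. The paper puts the constant weight $1$ on $\Phi$. Exact cancellation of the hyperbolic couplings then forces the weight $\frac{\bar v}{k_1(\bar v)}$ on $\Psi$ and $\frac{R}{k_1(\bar v)(k_1(\bar v)-e^{\phi_\pm})}$ on $W$, not $\bar\theta^{-1}$. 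To get these weights you must use the profile equation to split $\frac{R\bar\theta+1-\mu\bar u_\xi}{\bar v}=k_1(\bar v)+k_2(\bar v,\bar\phi_\xi)$ as in \eqref{def-k}.
\begin{itemize}
\item $k_1$ is affine in $\bar v$, so the R--H relations give $H_1\ge c(\bar v_\xi\Psi^2+\bar v_\xi W^2+\Psi_\xi^2+W_\xi^2)$ as in Kawashima--Matsumura.
\item $k_2=O(\delta^3)$ is treated after an integration by parts.
\end{itemize}

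Smaller points:
\begin{itemize}
\item The linearized Poisson relation is $\Phi_\xi\approx-\bar v\sigma+\lambda^2\bar v(\sigma_\xi/\bar v)_\xi$, not $-\bar v^2\sigma$.
\item With the correct $W$, no $\sigma$-energy comes from the energy equation; it comes only from the momentum equation.
\item Convergence of $\phi$ needs $\|\sigma_\xi(t)\|\to0$, not merely an $H^2$ bound on $\sigma$. The paper gets this by including $\sigma_\xi$ in the same $W^{1,1}$ argument.
\end{itemize}
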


\begin{rmk}
In this article, we only study the existence and the asymptotic
stability of shock profiles of the full compressible NSP system
(\ref{model11}). We should mention that the corresponding initial
boundary value problem such as the out-flow problem and the inflow
problem for the one-dimensional and multi-dimensional full
compressible NSP system is surely more difficult, thus more
interesting. These are expected to be done in the forthcoming
papers.
\end{rmk}

Now we briefly state the ideas and main difficulties for the proof of Theorems 1.1 and 1.2. First, let us recall that, in the context of gas dynamics and kinetic theory, the equivalence of shock profiles in the Eulerian coordinates and Lagrangian coordinates has been shown in \cite{HWWY}. 
Then we only need to establish the existence of the traveling wave solution $(\bar{\rho}, \bar{u}, \bar{\theta}, \bar{\phi})(x-\bar{s}t)$, to the system (\ref{model11}). 
Moreover, we can rewrite the equations of the shock profile into a $4\times 4$ autonomous system of the first-order ordinary differential equations. 
Then, through a subtle analysis, it is found there is zero eigenvalue and two corresponding linear-independent eigenvectors of the Jacobi matrix of the linearized equations at the critical point $(\rho_-, \theta_-, z_-, w_-, \sqrt{R\gamma\theta_-+1})$. Therefore, the proof of Theorem 1.1 is finished by the center manifold theorem in \cite{MP}.

The stability analysis of the traveling wave is complicated by the incomplete parabolicity of the system and, more importantly, by the coupling with the self-consistent electric field. The former is a classical difficulty for the compressible Navier--Stokes equations: the equations for the velocity $u$ and the absolute temperature $\theta$ have a parabolic character, whereas the equation for the specific volume $v$ is hyperbolic, and this difficulty can be handled by the energy arguments developed in \cite{KM,KN,MN,MN1}. The electric field, however, changes both the construction of the integrated system and its energy structure. Using the Poisson equation, we first derive an integrated equation for the velocity perturbation as in \cite{DLZ}. For the third equation, the natural conserved quantity is the total energy containing the internal, kinetic, and electrostatic contributions; see \eqref{relation32}. And the resulting integrated equation is no longer directly parabolic in the temperature variable. To overcome this difficulty, we introduce $W$ in \eqref{relation35}, a carefully chosen integrated-energy variable that simultaneously preserves the underlying conservation structure, cancels the leading kinetic--electrostatic couplings, and recovers the heat-conductive principal part, so that $W_y$ represents the temperature perturbation at the leading order. This construction is different from that for the one-dimensional full Navier--Stokes equations in \cite{KM}.

A further key point in the basic energy estimate is to recover a coercive dissipation from the hyperbolic--parabolic coupling. For this purpose, we identify the following decomposition of the corresponding coefficient:
\[
\frac{R\bar\theta+1-\mu\bar u_y}{\bar v}
=
k_1(\bar v)+k_2(\bar v,\bar\phi_y).
\]
Here $k_1(\bar v)$ plays the role of the coefficient $k(\bar v)$ that generates the key positive energy structure in \cite{KM} and satisfies the same positivity properties, while $k_2(\bar v,\bar\phi_y)$ is an electrostatic correction with favorable decay along the traveling-wave profile. This decomposition allows the main energy framework of [23] to remain applicable. The remaining electrostatic coupling terms, in particular $F_{12}$, $F_{13}$, and $F_{22}$, require additional estimates. Thus, compared with the one-dimensional isentropic NSP system in \cite{DLZ} and the full Navier--Stokes system in \cite{KM}, the main additional issues lie in the construction of the integrated energy variable and the treatment of the electrostatic coupling.

\

The rest of this paper is organized as follows. In Section 2, we
shall prove Theorem 1.1 for the construction of the shock profile in
terms of (\ref{model11}) for the fixed viscosity coefficient $\mu >
0 $, heat-conductive $\kappa>0$ and the fixed Debye length $\lambda
> 0$. Next we reformulate the original problem in terms of the
perturbed functions and give the proof of Theorem 1.2 in Section~3.
We also collect the a priori estimates needed in the proof of our
main theorem, in Proposition 3.2. And Section~4 is used to
present the a-priori estimate. %Finally, we complete the proof of our
%main theorem in Section~5.

\noindent{\bf Notations.} Throughout this paper, $C$ denotes some
generic positive (generally large) constant and $c$ (generally
small) denotes two generic positive constants, where both $C$ and
$c$ may take different values in different places. $L^p(\mathbb{R})$
stands for the $L^p$-norm $(1 \leq p \leq +\infty)$, and for
simplicity we denote $\|\cdot\|_{L^2}$ by $\|\cdot\|$.  For a
non-negative integer $k$, $H^k(\mathbb{R})$ denotes the standard
Hilbert spaces of order $k$, and we write $\|\cdot\|_k$ for the
usual norm of $H^k(\mathbb{R})$. In addition, we denote by $C([0,
T]; H^k(\mathbb{R}))$ (resp. $L^2(0, T; H^k(\mathbb{R}))$) the space
of continuous (resp. square integrable) functions on $[0, T]$ taking
values in the space $H^k(\mathbb{R})$.

\section{Existence of shock profile with small amplitude}
In this section, we construct the shock profile solution $(\bar{v},
\bar{u}, \bar{\theta}, \bar{\phi})(x-st)$, to the system
(\ref{model12}), for the fixed constant viscosity $\mu>0$,
heat-conductive $\kappa>0$ and Debye length $\lambda>0$. First, let
us recall that, in the context of gas dynamics and kinetic theory,
the equivalence of shock profiles in the Eulerian coordinates and
Lagrangian coordinates has been shown in \cite{HWWY}. Then, the case
of the compressible NSP system is similar. Therefore, we only need
to consider the existence of the shock profile solution
$(\bar{\rho}, \bar{u}, \bar{\theta}, \bar{\phi})(x-\bar{s}t)$ with
the wave speed $\bar{s}$, to the system (\ref{model11}). For this,
let $(\rho,u,\theta,\phi)$ be a smooth traveling wave solution of
(\ref{model11}) which depends only on the variable $\xi=x-\bar{s}t$
with the wave speed $\bar{s}$ to be determined later and connects
the far-fields $(\rho_\pm, u_\pm, \theta_\pm,\phi_\pm)$ at
$x=\pm\infty$ respectively. The profile is thereby governed by the
following system of ODEs:
\begin{eqnarray}
\left\{
\begin{array}{lcr}
-\bar{s}\frac{\mathrm{d}\rho}{\mathrm{d}\xi}+\frac{\mathrm{d}(\rho
u)}{\mathrm{d}\xi}=0,\\
-\bar{s}\frac{\mathrm{d}(\rho
u)}{\mathrm{d}\xi}+\frac{\mathrm{d}(\rho
u^2+p)}{\mathrm{d}\xi}=\mu\frac{\mathrm{d}^2u}{\mathrm{d}\xi^2}-\rho\frac{\mathrm{d}\phi}{\mathrm{d}\xi},\\
-\bar{s}\frac{\mathrm{d}(\rho(e+\frac{1}{2}u^2))}{\mathrm{d}\xi}+\frac{\mathrm{d}(\rho
u(e+\frac{1}{2}u^2)+pu)}{\mathrm{d}\xi}=\kappa\frac{\mathrm{d}^2\theta}{\mathrm{d}\xi^2}
+\mu\frac{\mathrm{d}}{\mathrm{d}\xi}(u\frac{\mathrm{d}u}{\mathrm{d}\xi})-\rho
u\frac{\mathrm{d}\phi}{\mathrm{d}\xi},\\
-\lambda^2\frac{\mathrm{d}^2\phi}{\mathrm{d}\xi^2}=\rho-e^{\phi},
\end{array}\label{model21}\right.
\end{eqnarray}
with
\begin{eqnarray}
\lim\limits_{\xi\to\pm\infty}(\rho,u,\theta,\phi)(\xi)=(\rho_\pm,u_\pm,\theta_\pm,\phi_\pm).\label{initial21}
\end{eqnarray}

Similar to (\ref{model14}), we can rewrite  system (\ref{model21})
as
\begin{eqnarray}
\left\{
\begin{array}{lcr}
-\bar{s}\frac{\mathrm{d}\rho}{\mathrm{d}\xi}+\frac{\mathrm{d}(\rho
u)}{\mathrm{d}\xi}=0,\\
-\bar{s}\frac{\mathrm{d}(\rho
u)}{\mathrm{d}\xi}+\frac{\mathrm{d}(\rho
u^2+p)}{\mathrm{d}\xi}=\mu\frac{\mathrm{d}^2u}{\mathrm{d}\xi^2}
+\frac{\mathrm{d}}{\mathrm{d}\xi}\big(\frac{\lambda^2}{2}(\frac{\mathrm{d}\phi}{\mathrm{d}\xi})^2-e^\phi\big),\\
-\bar{s}\frac{\mathrm{d}}{\mathrm{d}\xi}\big(\rho(e+\frac{1}{2}u^2)+\rho\phi
+\frac{\lambda^2}{2}(\frac{\mathrm{d}\phi}{\mathrm{d}\xi})^2-e^\phi\big)+\frac{\mathrm{d}}{\mathrm{d}\xi}(\rho
u(e+\frac{1}{2}u^2)+pu+\rho
u\phi)\\
~~~~~~~~~~~~~=\kappa\frac{\mathrm{d}^2\theta}{\mathrm{d}\xi^2}
+\mu\frac{\mathrm{d}}{\mathrm{d}\xi}\big(u\frac{\mathrm{d}u}{\mathrm{d}\xi}\big),\\
-\lambda^2\frac{\mathrm{d}^2\phi}{\mathrm{d}\xi^2}=\rho-e^{\phi}.
\end{array}\right.\label{model22}
\end{eqnarray}
In this procedure, we have used the following facts:
\begin{eqnarray*}
-\rho\frac{\mathrm{d}\phi}{\mathrm{d}\xi}=
\frac{\lambda^2}{2}\frac{\mathrm{d}}{\mathrm{d}\xi}(\frac{\mathrm{d}\phi}{\mathrm{d}\xi})^2-\frac{\mathrm{d}}{\mathrm{d}\xi}e^\phi,
\end{eqnarray*}
and
\begin{eqnarray*}
-\rho
u\frac{\mathrm{d}\phi}{\mathrm{d}\xi}=-\frac{\mathrm{d}}{\mathrm{d}\xi}\rho
u\phi+\frac{\mathrm{d}}{\mathrm{d}\xi}\big(\bar{s}\rho\phi
+\frac{\lambda^2}{2}\bar{s}(\frac{\mathrm{d}\phi}{\mathrm{d}\xi})^2-\bar{s}e^\phi\big).
\end{eqnarray*}
Further, we obtain the R-H condition in the Eulerian coordinates as
follows:
\begin{eqnarray}
\left\{
\begin{array}{lcr}
-\bar{s}(\rho_+-\rho_-)+(\rho_+u_+-\rho_-u_-)=0,\\
-\bar{s}(\rho_+u_+-\rho_+u_+)+(\rho_+u^2_++R\rho_+\theta_+-\rho_-u^2_--R\rho_-\theta_-)+(\rho_+-\rho_-)=0,\\
-\bar{s}\big((\rho_+(\frac{R}{\gamma-1}\theta_++\frac{1}{2}u^2_+)+\rho_+\phi_+-e^{\phi_+})
-(\rho_-(\frac{R}{\gamma-1}\theta_-+\frac{1}{2}u^2_-)+\rho_-\phi_--e^{\phi_-})\big)\\
~~+\rho_+u_+\big
(\frac{R}{\gamma-1}\theta_++\frac{1}{2}u^2_++R\theta_++\phi_+\big)
-\rho_-u_-\big(\frac{R}{\gamma-1}\theta_-+\frac{1}{2}u^2_-+R\theta_-+\phi_-\big)=0,
\end{array}\right.\label{condition21}
\end{eqnarray}
and it is the same as the one for the following full quasi-neutral
type Euler system:
\begin{eqnarray}
\left\{
\begin{array}{lcr}
\rho_t+(\rho u)_x=0,\\
(\rho u)_t+(\rho u^2+R\rho\theta+\rho)_x=0,\\
\big(\rho(\frac{R}{\gamma-1}\theta+\frac{1}{2}u^2)+\rho
\ln\rho-\rho\big)_t+(\rho
u(\frac{R}{\gamma-1}\theta+\frac{1}{2}u^2)+R\rho\theta u+\rho
u\ln\rho)_x=0.
\end{array}\right.\label{model23}
\end{eqnarray}
which has the following eigenvalues:
\begin{eqnarray*}
\bar{\lambda}_1=u-\sqrt{R\gamma\theta+1},~ \bar{\lambda}_2=u,~
\bar{\lambda}_3=u+\sqrt{R\gamma\theta+1}.
\end{eqnarray*}

In the following let us construct the traveling wave with
compressibility, namely, $\rho_+<\rho_-$ and $\bar{s}<0$. That is,
we have the following existence of shock profile of (\ref{model11}).
\begin{thm}
For given data $(\rho_-,u_-,\theta_-)$ with $\rho_-,\theta_->0$,
there exist positive constants $\bar{\varepsilon}_0, c, C, b$ and
$C_k(k=0,1,2,\cdots)$ such that if $(\rho_+,u_+,\theta_+)$ satisfies
(\ref{condition21}) and $0<\rho_--\rho_+\le\bar{\delta}_0$, the
problem (\ref{model21}) or equivalently (\ref{model22}), has a
smooth traveling wave solution of the form $(\bar{\rho}, \bar{u},
\bar{\theta}, \bar{\phi})(x-\bar{s}t)$ connecting the far fields
$(\rho_\pm, u_\pm, \phi_\pm, \theta_\pm)$ with
$\phi_\pm=\ln\rho_\pm$, which is unique up to a spatial shift and
satisfies the following properties:
\begin{eqnarray}
\bar{\rho}_x(x-st)=\frac{\bar{\rho}^2\bar{u}_x(x-\bar{s}t)}{\rho_-(u_--s)},
c\bar{\rho}_x\le\bar{\phi}_x,\bar{\theta}_x\le
C\bar{\rho}_x<0\label{inequality21}
\end{eqnarray}
for any $x\in R$ and $t>0$, and
\begin{eqnarray}
\big|\frac{\mathrm{d}^k}{\mathrm{d}x^k}(\bar{\rho}-\rho_\pm,
\bar{u}-u_\pm, \bar{\theta}-\theta_\pm, \bar{\phi}-\phi_\pm)\big|
\le
C_k|\rho_+-\rho_-|^{k+1}e^{-b|\rho_+-\rho_-||x-st|}\label{inequality22}
\end{eqnarray}
for $x-st\lessgtr0$ and $k=0,1,2,\cdots$.
\end{thm}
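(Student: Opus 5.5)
We reduce the profile system (\ref{model22}) to a first-order autonomous system on $\mathbb{R}^4$ and apply the center manifold theorem in the form of Majda--Pego \cite{MP}. Integrate the first three equations of (\ref{model22}) from $-\infty$ to $\xi$, using the far-field state at $-\infty$ and $\phi_-=\ln\rho_-$.

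\textbf{Reduction.} The mass equation gives $\rho(u-\bar s)=\rho_-(u_--\bar s)=:m$, so $u$ is an explicit function of $\rho$. Differentiating this identity yields the first relation in (\ref{inequality21}). The momentum equation becomes
\[
\mu u_\xi=m(u-u_-)+R\rho\theta-R\rho_-\theta_-+e^\phi-e^{\phi_-}-\tfrac{\lambda^2}{2}\phi_\xi^2 .
\]
The energy equation gives $\kappa\theta_\xi$ as an algebraic expression in $(\rho,u,\theta,\phi,\phi_\xi)$ after $\mu u u_\xi$ is substituted. Introduce $z=\phi_\xi$, so that $\lambda^2 z_\xi=e^\phi-\rho$. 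Since $u_\xi=-(m/\rho^2)\rho_\xi$, the unknowns $(\rho,\theta,\phi,z)$ satisfy a smooth system $Y'=F(Y;\rho_+,u_+,\theta_+,\bar s)$. The end states $Y_\pm=(\rho_\pm,\theta_\pm,\ln\rho_\pm,0)$ are equilibria exactly when (\ref{condition21}) holds.

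\textbf{Center manifold.} Linearize at $Y_-$ with $\bar s$ equal to the characteristic speed $\bar s_-=u_--\sqrt{R\gamma\theta_-+1}$. The Poisson block couples $\phi$ and $\rho$ through $\lambda^2 z'=\phi-\rho/\rho_-$ (to first order). One has to check the following:
\begin{itemize}
\item The Jacobian has a simple zero eigenvalue. Its eigenvector is the quasi-neutral direction $\phi=\ln\rho$, $z=0$, aligned with the genuinely nonlinear eigenvector $r_1$ of (\ref{model23}).
\item The remaining three eigenvalues have nonzero real parts. This should follow from $\mu,\kappa,\lambda>0$ and the strict subsonic/supersonic relations with respect to the other two characteristic speeds.
\end{itemize}
Viewing $(\rho_+,\bar s)$ as parameters, the center manifold theorem gives a locally invariant one-dimensional manifold, parametrized by $\rho$, containing all small bounded orbits and all nearby equilibria. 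On it the flow reduces to a scalar ODE of the form
\[
\rho'=g(\rho)=c_0(\rho-\rho_-)(\rho-\rho_+)\bigl(1+O(|\rho-\rho_-|+|\rho_+-\rho_-|)\bigr),
\]
whose leading coefficient has sign fixed by genuine nonlinearity, i.e. $\nabla\bar\lambda_1\cdot r_1\neq0$.

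\textbf{Orbit and estimates.} The Lax condition $\rho_+<\rho_-$, $\bar s<0$ selects the sign under which $\rho$ decreases monotonically from $\rho_-$ to $\rho_+$, and any bounded orbit must lie on the manifold. This gives existence and uniqueness up to shift. Along the manifold, $\theta,\phi,z$ are smooth functions of $\rho$ whose derivatives have definite sign at leading order, namely the components of $r_1$ and $\phi'(\rho)=1/\rho$. This yields $c\bar\rho_x\le\bar\phi_x,\bar\theta_x\le C\bar\rho_x<0$. Integrating the scalar ODE explicitly gives the decay rate $\delta e^{-b\delta|\xi|}$ with $\delta=|\rho_+-\rho_-|$; differentiating the ODE repeatedly gives the factor $\delta^{k+1}$ for the $k$-th derivative.

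\textbf{Main obstacle.} The hardest step is the spectral computation at $Y_-$: showing the zero eigenvalue is simple and that the Poisson coupling produces no additional center directions. The Poisson block alone has eigenvalues $\pm1/\lambda$ when decoupled, and the coupling must be tracked through the reduced characteristic polynomial. I would first compute the determinant of the linearization restricted to the hyperplane complementary to $r_1$. I expect it to be proportional to $(\bar s-\bar\lambda_1)$ times nonzero factors involving $\mu,\kappa,\lambda$.
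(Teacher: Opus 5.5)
Your overall route is the paper's: integrate (\ref{model22}) once to get a four-dimensional first-order system, append the speed as a parameter, apply the Majda--Pego center manifold theorem at $U_-$, and read existence, monotonicity and decay off the scalar reduced flow. The genuine gap is the choice of characteristic family. You linearize at $\bar s_-=u_--\sqrt{R\gamma\theta_-+1}=\bar\lambda_1(U_-)$. For (\ref{model23}), in the variables $(\rho,u,\theta)$ one has $r_1\propto(\rho,-c,(\gamma-1)\theta)$ with $c=\sqrt{R\gamma\theta+1}$. Hence $\nabla\bar\lambda_1\cdot r_1=-c-\frac{R\gamma(\gamma-1)\theta}{2c}<0$, so $\bar\lambda_1$ increases as $\rho$ decreases along the 1-Hugoniot curve. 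The Lax condition $\bar\lambda_1(U_+)<\bar s<\bar\lambda_1(U_-)$ therefore forces $\rho_+>\rho_-$.

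Under the hypothesis $\rho_+<\rho_-$, the second equilibrium on your center manifold is an expansive (non-Lax) state with $\bar s>\bar\lambda_1(U_-)$. Then $c_0<0$ in your factorization, so $g'(\rho_-)<0$ and $\rho'>0$ on $(\rho_+,\rho_-)$. The heteroclinic orbit thus runs from $U_+$ at $\xi=-\infty$ to $U_-$ at $\xi=+\infty$. Your step ``the Lax condition $\rho_+<\rho_-$, $\bar s<0$ selects the sign under which $\rho$ decreases'' is false, and the argument as set up rules out the required profile rather than producing it. The pair $\rho_+<\rho_-$, $\bar s<0$ is not a Lax condition; the sign of $\bar s$ is not even Galilean invariant. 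Genuine nonlinearity fixes the sign of $c_0$ only once the correct family is chosen.

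The paper actually works with the 3-family, despite the remark $\bar s<0$ in its text. After normalizing $(\rho_-,u_-,\theta_-,\phi_-)=(1,0,1,0)$, it linearizes at $\tau_0=\sqrt{R\gamma+1}=\bar\lambda_3(U_-)$. Lax then gives $0<\bar s<\sqrt{R\gamma+1}$, and $\dot g(0)$ has the sign of $(R\gamma+1-\bar s^2)/(\mu\bar s)>0$. So the origin is repelling, $\eta_+<0$, and the orbit goes from $U_-$ to $U_+$ with $\rho$ decreasing. This explicit sign check, tied to the correct family, is what your proposal is missing.

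By contrast, the spectral step you single out as the main obstacle is routine, and it is worth recording because the paper omits it. Let $J_4$ be the upper-left $4\times4$ block of the paper's $J$. Then $\det(J_4-xI)=x\,C(x)$ with $C(x)=x^3-(J_{11}+J_{22})x^2+(J_{11}J_{22}-J_{12}J_{21}-\lambda^{-2})x+\lambda^{-2}(J_{11}+J_{13}+J_{22})$. At $\tau_0$ one has $J_{11},J_{22}<0$ and $J_{11}+J_{13}=-R(\gamma-1)/(\mu\tau_0)<0$. Hence $C(0)\neq0$ and $\mathrm{Re}\,C(i\omega)=(J_{11}+J_{22})\omega^2+\lambda^{-2}(J_{11}+J_{13}+J_{22})<0$. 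So zero is simple and no other eigenvalue lies on the imaginary axis.

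Incidentally, differentiating $\rho(u-\bar s)=\rho_-(u_--\bar s)$ gives $\bar\rho_x=-\bar\rho^2\bar u_x/(\rho_-(u_--\bar s))$. The first relation in (\ref{inequality21}) as printed has a sign slip, so you cannot literally ``derive'' it.
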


\noindent{\bf Proof.} Without loss of generality, we can assume
$(\rho_-,u_-,\theta_-,\phi_-)=(1,0,1,0)$ in the following. First,
from the first equation of (\ref{model22}), one can solve $u$ by
$\rho$ as $u=\frac{\bar{s}(\rho-1)}{\rho}$. Replacing $u$ by $\rho$ in (\ref{model22}) and integrating
the resultant equation from $-\infty$ to $\xi$, one obtains the
following equivalent system
\begin{eqnarray} \left\{
\begin{array}{lcr}
\rho_\xi=\frac{\rho^2}{\mu
\bar{s}}((R\theta-\bar{s}^2)(\rho-1)+R(\theta-1)+\frac{\bar{s}^2}{\rho}(\rho-1)^2-\frac{\lambda^2}{2}w^2+z-1),\\
\theta_\xi=\frac{1}{\kappa}(-\frac{R\bar{s}}{\gamma-1}(\theta-1)-\bar{s}\log{z}+(z-1)\frac{\bar{s}}{\rho}
-\frac{\lambda^2}{2\rho}\bar{s}w^2+\frac{\bar{s}^3}{2\rho^2}(\rho-1)^2
+\frac{R\bar{s}(\rho-1)}{\rho}),\\
z_\xi=zw,\\
w_\xi=-\lambda^{-2}(\rho-z).
\end{array}\right.\label{model24}
\end{eqnarray}
with the far fields given by
\begin{eqnarray}
\lim_{\xi\rightarrow\pm\infty}(\rho,\theta,z,w)(\xi)=(\rho_{\pm},\theta_{\pm},\rho_{\pm},0).\label{initial23}
\end{eqnarray}
Here we have denoted $z=e^\phi$ and $w=\frac{\mathrm{d}\phi}{\mathrm{d}\xi}$. For convenience, we introduce 
$$U=(\rho, \theta, z, w),\quad  U_\pm=(\rho_\pm, \theta_\pm,z_\pm, w_\pm),$$ 
and write (\ref{model24}) as $\dot U=F(U)$, where $F(\cdot)$ denotes the vector field on the right hand of (\ref{model24}).

Once the autonomous ODE system (\ref{model24}) is derived, the rest of the proof follows the idea in \cite{MP}, and the subsequently similar arguments in \cite{DLZ}. We append $\dot\tau=0$ and consider the extended system 
\begin{eqnarray}
\left\{
\begin{array}{lcr}
\dot{\rho}=\frac{\rho^2}{\mu
\tau}((R\theta-\tau^2)(\rho-1)+R(\theta-1)+\frac{\tau^2}{\rho^2}(\rho-1)^2-\frac{\lambda^2}{2}w^2+z-1),\\
\dot{\theta}=\frac{1}{\kappa}(-\frac{R\tau}{\gamma-1}(\theta-1)-\tau\log{z}+(z-1)\frac{\tau}{\rho}-\frac{\lambda^2}{2\rho}\tau
w^2+\frac{\tau^3}{2\rho^2}(\rho-1)^2+\frac{R\tau(\rho-1)}{\rho}),\\
\dot{z}=zw,\\
\dot{w}=-\lambda^{-2}(\rho-z),\\
\dot{\tau}=0.
\end{array}\right.\label{model25}
\end{eqnarray}
It is obvious that $(\rho_\pm, \theta_\pm, z_\pm, w_\pm, \bar{s})$ are the only two critical points of (\ref{model25}).

Now we take $(\rho_-, \theta_-, z_-, w_-, \sqrt{R\gamma\theta_-+1})$$=(1,1,1,0,\sqrt{R\gamma+1})$ as a reference state and construct the center manifold of (\ref{model25}) around it.
Indeed, the Jacobian of
(\ref{model25}) at $=(1,1,1,0,\sqrt{R\gamma+1})$ is
\begin{eqnarray*}
J=
\begin{pmatrix}
\frac{1}{\mu  \tau_0 }(R- \tau_0 ^2) & \frac{R}{\mu  \tau_0 } &
\frac{1}{\mu
 \tau_0 } & 0 & 0 \\
\frac{R \tau_0 }{\kappa} & -\frac{R \tau_0 }{\kappa(\gamma-1)} & 0 & 0 & 0 \\
0 & 0 & 0 & 1 & 0 \\
-\lambda^{-2} & 0 & \lambda^{-2} & 0 & 0 \\
0 & 0 & 0 & 0 & 0
\end{pmatrix}, \quad \text{where~}\tau_0=\sqrt{R\gamma+1}.
\end{eqnarray*}
At the reference equilibrium $(1,1,1,0,\sqrt{R\gamma+1})$, the matrix $J$ has two linearly independent eigenvectors associated with the zero eigenvalue: $(1, \gamma-1, 1, 0, 0),~(0, 0, 0, 0, 1)$. 
Hence Proposition 3.2 of \cite{MP} yields a two-dimensional local center manifold $M^c$ invariant along the flow (\ref{model25}) of the form
\begin{eqnarray*}
U(\eta, \tau)=U_-+\eta(1, \gamma-1, 1, 0)+H(\eta, \tau),
\end{eqnarray*}
with $|\eta|+|\tau-\sqrt{R\gamma+1}|\le C$ for some constant $C> 0$.
Here $\tau$ is regarded as the parameter since $\dot{\tau}=0$ in the flow, and $
H(\eta, \tau)=(H^1, H^2, H^3,
H^4)(\eta, \tau)\in \mathbb{R}^4$
is the higher order term with
\begin{eqnarray}
H(0,\sqrt{R\gamma+1})=H_\eta(0,\sqrt{R\gamma+1})=H_\tau(0,\sqrt{R\gamma+1})=0.\label{relation23}
\end{eqnarray}
If $|\rho_+-1|$ is small enough, the equilibria $(U_-, \bar{s})$ and $(U_+, \bar{s})$ belong to $M^c$. Therefore, there exists $\eta_+ < 0$ such that $U_+=U_-+\eta_+(1, \gamma-1, 1, 0)+H(\eta_+, \bar{s})$, where $\eta_+<0$ follows from the compressibility. Restricting the flow to $\tau=\bar s$ gives the curve
\begin{eqnarray}
U(\eta, \bar{s})=(\rho(\eta), \theta(\eta), z(\eta),
w(\eta))=:U_-+\eta(1, \gamma-1, 1, 0)+H(\eta,
\bar{s})\label{relation24}
\end{eqnarray}
for $\eta\in(\eta_+, 0)$. 
$U(\eta, \bar{s})$ is invariant under the flow (\ref{model24}), and thus $\mathrm{d}U(\eta, \bar{s})/\mathrm{d}\eta$, the tangent vector of the curve, is parallel to $F(U(\eta, \bar{s}))$. Therefore,
the existence of the orbit of (\ref{model24}) connecting $U_-$ and $U_+$ is reduced to solve the ODE
\begin{eqnarray}
\left\{
\begin{array}{lcr}
\frac{\mathrm{d}\eta}{\mathrm{d}\xi}=g(\eta),  -\infty<\xi<+\infty, \\
\lim\limits_{\xi\to-\infty}\eta(\xi)=0,
\lim\limits_{\xi\to+\infty}\eta(\xi)=\eta_+
\end{array}\right.\label{relation25}
\end{eqnarray}
where $g(\eta)$ is determined by
\begin{eqnarray}
g(\eta)U_\eta(\eta, \bar{s})=F(U(\eta, \bar{s})).\label{relation26}
\end{eqnarray}

Now it suffices to prove the existence of the solution to
(\ref{relation25}). Note that since $\eta_+ < 0$, the standard ODE
theory shows that (\ref{relation25}) has a smooth solution if and
only if $g(\eta) < 0$ for all $\eta\in (\eta_+, 0)$. Since $U_+$ is
the unique state near $U_-$ such that $F(U_+) = 0$, it follows from
(\ref{relation26}) that $g(\eta)$ vanishes only at two end points of
$(\eta_+, 0)$. Moreover, 
$\dot{g}(0)$ is derived by differentiating (\ref{relation26}) and then taking
the inner product of the resulting equation with $U_\eta(0,
\bar{s})$ as
\begin{eqnarray}
\dot{g}(0)&=&\frac{U_\eta(0, \bar{s})\mathrm{d}F(U_-)U_\eta(0, \bar{s})^\top}{|U_\eta(0, \bar{s})|^2} \notag\\
&=&\frac{R\gamma+1-\bar{s}^2}{(\gamma+1)\mu
s}+O(H_\eta(0, \bar{s})) \notag\\
&=&\frac{\sqrt{2}(\gamma-1)\rho_+}{\mu(\gamma+1)\sqrt{1+\rho_+}\sqrt{R\gamma+1}}(1-\rho_+)+O((1-\rho_+)^2) %\notag\\
>0\label{relation27}
\end{eqnarray}
for $|\rho_+-1|$ small enough. Here in the last equality of
(\ref{relation27}), we have used (\ref{relation23}) and the
following facts:
\begin{eqnarray*}
\theta_+-1=\frac{\bar{s}^2}{R\rho^2_+}(\rho_+-1)-\frac{R+1}{R\rho_+}(\rho_+-1),
\end{eqnarray*}
and
\begin{eqnarray*}
\frac{1+\rho_+-\gamma(\rho_+-1)}{2(\gamma-1)\rho^2_+}\bar{s}^2=\frac{R\gamma+1}{(\gamma-1)\rho_+}
-\frac{1}{\rho_+}(\rho_+-1)+\frac{\rho_+-1-\ln{\rho_+}}{\rho_+-1},
\end{eqnarray*}
which can be derived by (\ref{condition21}). Therefore, it holds
that $g(\eta) < 0$ in $(\eta_+, 0)$, and thus (\ref{relation25})
admits a smooth solution $\eta(\xi)$ for all $\xi\in \mathbb{R}$.
By setting
\begin{eqnarray*}
(\bar{\rho}, \bar{u}, \bar{\theta}, \bar{\phi})(\xi):= (\rho,
\bar{s}(1-\rho^{-1}), \theta, \log{z})(\eta(\xi))
\end{eqnarray*}
where $\rho(\cdot),\theta(\cdot)$ and $z(\cdot)$ are defined in
(\ref{relation24}), it yields that
$(\bar{\rho},\bar{u},\bar{\theta},\bar{\phi})$ solves
(\ref{model21}) with (\ref{initial21}).

The monotonicity
(\ref{inequality21}) follows from the representation of the center manifold and the sign of $d\eta/d\xi$:
\begin{eqnarray*}
&&\frac{\mathrm{d}\bar{\rho}}{\mathrm{d}\xi}=\bar{\rho}(\eta(\xi))\frac{\mathrm{d}\eta}{\mathrm{d}\xi}
=(1+H^1_\eta(\eta(\xi), \bar{s}))\frac{\mathrm{d}\eta}{\mathrm{d}\xi}\sim\frac{\mathrm{d}\eta}{\mathrm{d}\xi}<0, \\
&&\frac{\mathrm{d}\bar{u}}{\mathrm{d}\xi}=\frac{\bar{s}}{\bar{\rho}^2}\frac{\mathrm{d}\bar{\rho}}{\mathrm{d}\xi}<0,\\
&&\frac{\mathrm{d}\bar{\theta}}{\mathrm{d}\xi}=\bar{\theta}(\eta(\xi))\frac{\mathrm{d}\eta}{\mathrm{d}\xi}
=(\gamma-1+H^2_\eta(\eta(\xi), \bar{s}))\frac{\mathrm{d}\eta}{\mathrm{d}\xi}\sim\frac{\mathrm{d}\bar\rho}{\mathrm{d}\xi}<0, \\
&&\frac{\mathrm{d}\bar{\phi}}{\mathrm{d}\xi}=z^{-1}\dot{z}(\eta(\xi))\frac{\mathrm{d}\eta}{\mathrm{d}\xi}
=z^{-1}(1+H^3_\eta(\eta(\xi),
\bar{s}))\frac{\mathrm{d}\eta}{\mathrm{d}\xi}\sim\frac{\mathrm{d}\bar{\rho}}{\mathrm{d}\xi}<0,
\end{eqnarray*}
provided that $|\rho_+-1|$ is sufficiently small. Furthermore, to
verify (\ref{inequality22}) for $k = 0$, it follows from
(\ref{relation27}) that
\begin{eqnarray*}
|\eta(\xi)|\le C|\eta_+|e^{-b(1-\rho_+)|\xi|}\le
C(1-\rho_+)e^{-b(1-\rho_+)|\xi|}, \qquad \xi<0,
\end{eqnarray*}
with some constants $C > 0$ and $b> 0$ independent of $\xi$. Then it
follows from (\ref{relation24}) that
\begin{eqnarray*}
(\bar{\rho}-1, \bar{u}, \bar{\theta}-1, \bar{\phi})(\xi)\le
C(1-\rho_+)e^{-b(1-\rho_+)|\xi|}, \qquad \xi<0.
\end{eqnarray*}
Similarly, one also has
\begin{eqnarray*}
(\bar{\rho}-1, \bar{u}, \bar{\theta}-1, \bar{\phi})(\xi)\le
C(1-\rho_+)e^{-b(1-\rho_+)|\xi|}, \qquad \xi>0.
\end{eqnarray*}
Hence, (\ref{inequality22}) for $k = 0$ is proved. Estimates on (\ref{inequality22}) with $k\geq1$ for those high order derivatives of $(\bar{\rho}, \bar{u}, \bar{\theta}, \bar{\phi})$ can be similarly obtained by differentiating (\ref{model21}), and the details of the proof are omitted for brevity. It is also straightforward to show the uniqueness (up to a shift) for the ODE system (\ref{model21}) and (\ref{initial21}). Therefore, we complete the proof of Theorem 2.1.

\section{Reformulation of the original problem}
In this section, we reformulate (\ref{model12})
and (\ref{initial14}) around the traveling wave $(\bar{v}, \bar{u}, \bar{\theta},
\bar{\phi})(x-st)$ and prepare the proof of Theorem 1.2. In the moving coordinate $y=x-st$, set
\[
(\varphi,\psi,w,\sigma)=(v,u,\theta,\phi)-(\bar v,\bar u,\bar\theta,\bar\phi).
\]
Subtracting the profile equations gives the following system
\begin{eqnarray}
\left\{
\begin{array}{lcr}
\varphi_t-s\varphi_y-\psi_y=0, \\
\psi_t-s\psi_y+R\big(\frac{\bar\theta+w}{\bar{v}+\varphi}-\frac{\bar{\theta}}{\bar{v}}\big)_y=\mu\big(\frac{\psi_y}{
\bar{v}+\varphi}\big)_y +\mu\big(\frac{\bar
u_y}{\bar{v}+\varphi}-\frac{\bar{u}_y}{\bar{v}}\big)_y-\big(\frac{\bar \phi_y+\sigma_y}{\bar{v}+\varphi}-\frac{\bar{\phi}_y}{\bar{v}}\big), \\
\big(\frac{R}{\gamma-1}w+\frac{1}{2}\psi^2+\bar{u}\psi\big)_t
-s\big(\frac{R}{\gamma-1}w+\frac{1}{2}\psi^2+\bar{u}\psi\big)_y
+R\big(\frac{\bar\theta+w}{\bar{v}+\varphi}\psi+(\frac{\bar\theta+w}{\bar{v}+\varphi}-\frac{\bar{\theta}}{\bar{v}})\bar{u}\big)_y\\
~~=\big(\frac{\kappa}{\bar{v}+\varphi}w_y+(\frac{\kappa}{\bar{v}+\varphi}-\frac{\kappa}{\bar{v}})\bar{\theta}_y\big)_y+\big(\frac{\mu}{
\bar{v}+\varphi}(\psi\psi_y+\bar u\psi_y+\bar u_y\psi)+(\frac{\mu}
{\bar{v}+\varphi}-\frac{\mu}{\bar
v})\bar u\bar u_y\big)_y\\
~~~~~~-\big(\frac{\bar u+\psi}{\bar{v}+\varphi}(\bar\phi_y+\sigma_y)-\frac{\bar u}{\bar v}\bar\phi_y\big), \\
-\lambda^2\big(\frac{\bar{\phi}_y+\sigma_y}{\bar{v}+\varphi}-\frac{\bar{\phi}_y}{\bar{v}}\big)_y
=\bar{v}e^{\bar{\phi}}(1-e^\sigma)-\varphi e^{\bar{\phi}}e^\sigma,
\end{array}\right.\label{model30}
\end{eqnarray}
and
\begin{eqnarray}
(\varphi, \psi, w)(0, y)=(\varphi_0, \psi_0, w_0)(y)=(v_0-\bar{v},
u_0-\bar{u}, \theta_0-\bar{\theta})(y).\label{initial31}
\end{eqnarray}
(\ref{model30})$_3$ is equivalent to
\begin{eqnarray}
&&\frac{R}{\gamma-1}(w_t-sw_y)+\frac{R(\bar\theta+w)}{\bar{v}+\varphi}\psi_y
+R\big(\frac{\bar\theta+w}{\bar{v}+\varphi}-\frac{\bar{\theta}}{\bar{v}}\big)\bar{u}_y\notag\\
&=&\big(\frac{\kappa}{\bar{v}+\varphi}w_y+(\frac{\kappa}{\bar{v}+\varphi}-\frac{\kappa}{\bar{v}})\bar{\theta}_y\big)_y
+\frac{\mu}{\bar{v}+\varphi}(\psi^2_y+2\bar{u}_y\psi_y)+\big(\frac{\mu}{\bar{v}+\varphi}-\frac{\mu}{\bar{v}}\big)\bar{u}^2_y.\label{equality30}
\end{eqnarray}
The momentum equation (\ref{model30})$_2$ and the temperature equation (3.3) have a parabolic form but are not in conservation form. 
Using the Poisson equation, the electrostatic force and work terms can be rewritten by the following identities.
\begin{eqnarray*}
-\big(\frac{\bar\phi_y+\sigma_y}{\bar{v}+\varphi}-\frac{\bar{\phi}_y}{\bar{v}}\big)
&=&-\big(\frac{1}{\bar{v}+\varphi}-\frac{1}{\bar{v}}\big)_y
+\frac{\lambda^2}{2}\big((\frac{\bar\phi_y+\sigma_y}{\bar{v}+\varphi})^2-(\frac{\bar{\phi}_y}{\bar{v}}\big)^2)_y\\
&&-\lambda^2\big(\frac{1}{\bar{v}+\varphi}(\frac{\bar\phi_y+\sigma_y}{\bar{v}+\varphi})_y
-\frac{1}{\bar{v}}(\frac{\bar{\phi}_y}{\bar{v}})_y\big)_y,
\end{eqnarray*}
and
\begin{eqnarray*}
&&-\big(\frac{\bar
u+\psi}{\bar{v}+\varphi}(\bar\phi_y+\sigma_y)-\frac{\bar u}{\bar
v}\bar\phi_y\big)=\big((\bar{v}+\varphi)e^{\bar\phi+\sigma}-\sigma
+\frac{\lambda^2(\bar\phi_y+\sigma_y)^2}{2(\bar{v}+\varphi)}\big)_t
-s\big((\bar{v}+\varphi)e^{\bar\phi+\sigma}-\bar{v}e^{\bar{\phi}}\notag\\
&-&\sigma
+\frac{\lambda^2(\bar\phi_y+\sigma_y)^2}{2(\bar{v}+\varphi)}-\frac{\lambda^2\bar{\phi}^2_y}{2\bar{v}}\big)_y
-\lambda^2\big(\frac{(\bar\phi_y+\sigma_y)\sigma_t}{\bar{v}+\varphi}\big)_y+\big(\frac{\lambda^2}{2}(\bar
u+\psi)(\frac{\bar\phi_y+\sigma_y}{\bar{v}+\varphi})^2\\
&-&\frac{\lambda^2}{2}\bar{u}(\frac{\bar{\phi}_y}{\bar{v}})^2-(\bar
u+\psi)e^{\bar\phi+\sigma}+\bar{u}e^{\bar{\phi}}\big)_y
+\lambda^2s\big(\frac{(\bar\phi_y+\sigma_y)^2}{\bar{v}+\varphi}-\frac{\bar{\phi}_y^2}{\bar{v}}\big)_y.
\end{eqnarray*}
Substituting these identities into the momentum and energy equations gives the conservation forms:
\begin{eqnarray}
&&\psi_t-s\psi_y+\big(\frac{R(\bar\theta+w)+1}{\bar{v}+\varphi}
-\frac{R\bar{\theta}+1}{\bar{v}}\big)_y=\mu\big(\frac{\psi_y}{
\bar{v}+\varphi}\big)_y +\mu\big(\frac{\bar
u_y}{\bar{v}+\varphi}-\frac{\bar{u}_y}{\bar{v}}\big)_y\notag\\
&+&\frac{\lambda^2}{2}\big((\frac{\bar{\phi}_y+\sigma_y}{\bar{v}+\varphi})^2
-(\frac{\bar{\phi}_y}{\bar{v}})^2\big)_y
-\lambda^2\big(\frac{1}{\bar{v}+\varphi}(\frac{\bar\phi_y+\sigma_y}{\bar{v}+\varphi})_y
-\frac{1}{\bar{v}}(\frac{\bar{\phi}_y}{\bar{v}})_y\big)_y,\label{relation31}
\end{eqnarray}
and
\begin{eqnarray}
&&\Big(\frac{R}{\gamma-1}w+\frac{1}{2}\psi^2+\bar{u}\psi+\sigma-(\bar{v}+\varphi)e^{\bar\phi+\sigma}
+\bar{v}e^{\bar{\phi}}-\frac{\lambda^2}{2}\frac{(\bar\phi_y+\sigma_y)^2}{\bar{v}+\varphi}+
\frac{\lambda^2}{2}\frac{\bar{\phi}^2_y}{\bar{v}}\Big)_t\notag\\
&&-s\Big(\frac{R}{\gamma-1}w+\frac{1}{2}\psi^2+\bar{u}\psi+\sigma-(\bar{v}+\varphi)e^{\bar\phi+\sigma}
+\bar{v}e^{\bar{\phi}}-\frac{\lambda^2}{2}\frac{(\bar\phi_y+\sigma_y)^2}{\bar{v}+\varphi}+
\frac{\lambda^2}{2}\frac{\bar{\phi}^2_y}{\bar{v}}\Big)_y\notag\\
&&+R\big(\frac{\bar\theta+w}{\bar{v}+\varphi}\psi+(\frac{\bar\theta+w}{\bar{v}+\varphi}
-\frac{\bar{\theta}}{\bar{v}})\bar{u}\big)_y
+\lambda^2\big(\frac{\bar\phi_y+\sigma_y}{\bar{v}+\varphi}(\sigma_t-s(\bar\phi_y+\sigma_y))
+\frac{s}{\bar{v}}\bar{\phi}^2_y\big)_y\notag\\
&&+\big((\bar
u+\psi)e^{\bar\phi+\sigma}-\bar{u}e^{\bar{\phi}}\big)_y
-\frac{\lambda^2}{2}\big((\bar
u+\psi)(\frac{\bar\phi_y+\sigma_y}{\bar{v}+\varphi})^2
-\bar{u}(\frac{\bar{\phi}_y}{\bar{v}})^2\big)_y\notag\\
&=&\big(\frac{\kappa}{\bar{v}+\varphi}w_y+(\frac{\kappa}{\bar{v}+\varphi}
-\frac{\kappa}{\bar{v}})\bar{\theta}_y\big)_y+\big(\frac{\mu}{
\bar{v}+\varphi}(\psi\psi_y+\bar u\psi_y+\bar u_y\psi)+(\frac{\mu}
{\bar{v}+\varphi}-\frac{\mu}{\bar v})\bar u\bar
u_y\big)_y.\label{relation32}
\end{eqnarray}

Furthermore, by setting
\begin{eqnarray}
\tilde{W}_{20}(x)=\int_{-\infty}^x\big(\phi_0-\bar{\phi}-v_0e^{\phi_0}+\bar{v}e^{\bar{\phi}}
-\frac{\lambda^2}{2}\frac{\phi^2_{0y}}{v_0}+\frac{\lambda^2}{2}\frac{\bar{\phi}^2_y}{\bar{v}}\big)\mathrm{d}y, \label{relation33}
\end{eqnarray}
together with (\ref{initial16}) and
(\ref{assumption12}), we have $\tilde{W}_{20}(x)\in
L^2(\mathbb{R})$, see Remark 4.2 below. Denote
$\tilde{W}_{0}:=\tilde{W}_{10}+\tilde{W}_{20}$, in which
$\tilde{W}_{10}$ is defined in \eqref{initial16}, and
\begin{eqnarray}
\tilde{w}_0=\frac{R}{\gamma-1}w_0+\frac{1}{2}\psi^2_0+\bar{u}\psi_0+\phi_0-\bar{\phi}-v_0e^{\phi_0}+\bar{v}e^{\bar{\phi}}
-\frac{\lambda^2}{2}\frac{\phi^2_{0y}}{v_0}+\frac{\lambda^2}{2}\frac{\bar{\phi}^2_y}{\bar{v}}.\label{relation34}
\end{eqnarray}
Then from (\ref{relation34}), we have $(\varphi_0, \psi_0,
\tilde{w}_0)=(\Phi_{0y}, \Psi_{0y}, \tilde{W}_{0y})$. Taking it into
account, we seek the solution of the problem (\ref{model30}) and
(\ref{initial31}) in the form
\begin{eqnarray}
(\varphi, \psi, \tilde{w})=(\Phi_{y}, \Psi_{y},
\tilde{W}_{y}),\label{relation30}
\end{eqnarray}
where
\begin{eqnarray*}
\tilde{w}=\frac{R}{\gamma-1}w+\frac{1}{2}\psi^2+\bar{u}\psi+\sigma-(\bar{v}+\varphi)e^{\bar{\phi}+\sigma}+\bar{v}e^{\bar{\phi}}
-\frac{\lambda^2}{2}\frac{(\bar{\phi}_y+\sigma_y)^2}{\bar{v}+\varphi}+\frac{\lambda^2}{2}\frac{\bar{\phi}^2_y}{\bar{v}}.
\end{eqnarray*}
Let us introduce the new variable
\begin{eqnarray}
W=\frac{\gamma-1}{R}(\tilde{W}-\bar{u}\Psi+e^{\phi_\pm}\Phi),\label{relation35}
\end{eqnarray}
and thus $w$ can be written in the form
\begin{eqnarray}
w&=&W_y+\frac{\gamma-1}{R}\big(\bar{u}_y\Psi-\frac{1}{2}\psi^2-\sigma-e^{\phi_\pm}\Phi_y
+(\bar{v}+\Phi_y)e^{\bar{\phi}+\sigma}-\bar{v}e^{\bar{\phi}}\notag\\
&&+\frac{\lambda^2}{2}\frac{(\bar{\phi}_y+\sigma_y)^2}{\bar{v}+\Phi_y}
-\frac{\lambda^2}{2}\frac{\bar{\phi}^2_y}{\bar{v}}\big).\label{relation36}
\end{eqnarray}
Substituting (\ref{relation36}) into (\ref{model30})$_1$,
(\ref{relation31}) and  (\ref{relation32}), and integrating once
with respect to $y$, we get the system for $(\Phi, \Psi, W, \sigma)$
\begin{eqnarray}
\left\{
\begin{array}{lcr}
\Phi_t-s\Phi_y-\Psi_y=0,\\
\Psi_{t}-s\Psi_y+(\frac{R(\bar\theta+w)+1}{\bar{v}+\Phi_y}-\frac{R\bar{\theta}+1}{\bar{v}})
=\frac{\mu}{\bar{v}+\Phi_y}\Psi_{yy}+\big(\frac{\mu}{\bar{v}+\Phi_y}-\frac{\mu}{\bar{v}}\big)\bar{u}_y\\
~~~~+\frac{\lambda^2}{2}\big((\frac{\bar{\phi}_y+\sigma_y}{\bar{v}+\Phi_y})^2-(\frac{\bar{\phi}_y}{\bar{v}})^2\big)
-\lambda^2\big(\frac{1}{\bar{v}+\Phi_y}(\frac{\bar\phi_y+\sigma_y}{\bar{v}+\Phi_y})_y-\frac{1}{\bar{v}}(\frac{\bar{\phi}_y}{\bar{v}})_y\big),\\
\frac{R}{\gamma-1}(W_t-sW_y)-s\bar{u}_y\Psi+(\frac{R(\bar\theta+w)+1}{\bar{v}+\Phi_y}-e^{\phi_\pm})\Psi_y
=\frac{\kappa}{\bar{v}+\Phi_y}w_y+\kappa(\frac{1}{\bar{v}+\Phi_y}-\frac{1}{\bar{v}})\bar{\theta}_y\\
~~~~~~~~~~+\frac{\mu}{\bar{v}+\Phi_y}(\bar{u}_y\Psi_y+\Psi_y\Psi_{yy})
-\lambda^2(\frac{\bar{\phi}_y+\sigma_y}{\bar{v}+\Phi_y}\sigma_t-\frac{s(\bar{\phi}_y+\sigma_y)^2}{\bar{v}+\Phi_y}
+\frac{s\bar{\phi}^2_y}{\bar{v}})\\
~~~~~~~~~~+\frac{\lambda^2}{2}\Psi_y\big(\frac{\bar{\phi}_y+\sigma_y}{\bar{v}+\Phi_y}\big)^2
-\lambda^2\frac{\Psi_y}{\bar{v}+\Phi_y}
\big(\frac{\bar{\phi}_y+\sigma_y}{\bar{v}+\Phi_y}\big)_y,\\
-\lambda^2\Big(\frac{\bar{\phi}_y+\sigma_y}{v}-\frac{\bar{\phi}_y}{\bar{v}}\Big)_y
=-\bar{v}e^{\bar{\phi}}\sigma+\bar{v}e^{\bar{\phi}}(1+\sigma-e^\sigma)-\Phi_y
e^{\bar{\phi}}+\Phi_y e^{\bar{\phi}}(1-e^\sigma),
\end{array}\right.\label{model31}
\end{eqnarray}
where we used the fact:
\begin{eqnarray*}
(\bar{u}+\Psi_y)e^{\bar{\phi}+\sigma}-\bar{u}e^{\bar{\phi}}=\frac{1}{\bar{v}+\Phi_y}\Psi_y
+(\frac{1}{\bar{v}+\Phi_y}-\frac{1}{\bar{v}})\bar{u}+\lambda^2\big(\frac{\bar{u}+\Psi_y}{\bar{v}+\Phi_y}
(\frac{\bar{\phi}_y+\sigma_y}{\bar{v}+\Phi_y})_y-\frac{\bar{u}}{\bar{v}}(\frac{\bar{\phi}_y}{\bar{v}})_y\big).
\end{eqnarray*}
The initial data for (\ref{model31}) is
\begin{eqnarray}
(\Phi,\Psi,W)(0,y)=(\Phi_0,\Psi_0,W_0)(y).\label{initial32}
\end{eqnarray}

Furthermore, we can rewrite the system (\ref{model31}) in the form
\begin{eqnarray}
\left\{
\begin{array}{lcr}
\Phi_t-s\Phi_y-\Psi_y=0, \\
\Psi_t-s\Psi_y-\frac{1}{\bar{v}}k_1(\bar{v})\Phi_y-\frac{k_2(\bar{v},
\bar{\phi}_y)}{\bar{v}}\Phi_y
+\frac{R}{\bar{v}}W_y=\frac{\mu}{\bar{v}}\Psi_{yy}-\frac{\gamma-1}{\bar{v}}\bar{u}_y\Psi+F_1, \\
\frac{R}{\gamma-1}(W_t-sW_y)-s\bar{u}_y\Psi+(k_1(\bar{v})-e^{\phi_\pm})\Psi_y+k_2(\bar{v},
\bar{\phi}_y)\Psi_y
-\frac{\kappa}{\bar{v}}(W_y+\frac{\gamma-1}{R}\bar{u}_y\Psi)_y \\
~~~~~~~~~~+\frac{\kappa}{\bar{v}^2}\bar{\theta}_y\Phi_y=F_2,\\
-\lambda^2\big(\frac{\bar{\phi}_y+\sigma_y}{v}-\frac{\bar{\phi}_y}{\bar{v}}\big)_y
=-\bar{v}e^{\bar{\phi}}\sigma-\Phi_y e^{\bar{\phi}}+F_3,
\end{array}\right.\label{model32}
\end{eqnarray}
where
\begin{eqnarray*}
F_1&=&\frac{\gamma-1}{2\bar{v}}\Psi^2_y-\frac{\Phi_y}{\bar{v}(\bar{v}+\Phi_y)}(\mu\Psi_{yy}+k_1(\bar{v})\Phi_y+k_2(\bar{v},
\bar{\phi}_y)\Phi_y-Rw)+\frac{\lambda^2}{2}\big((\frac{\bar{\phi}_y+\sigma_y}{\bar{v}+\Phi_y})^2-(\frac{\bar{\phi}_y}{\bar{v}})^2\big)\\
&&+\frac{\gamma-1}{\bar{v}}\big(\sigma-(\bar{v}+\Phi_y)e^{\bar{\phi}+\sigma}+\bar{v}e^{\bar{\phi}}
+e^{\phi_\pm}\Phi_y-\frac{\lambda^2}{2}(\frac{(\bar{\phi}_y+\sigma_y)^2}{\bar{v}+\Phi_y}-\frac{\bar{\phi}^2_y}{\bar{v}})\big) \\
&&
-\lambda^2\big(\frac{1}{\bar{v}+\Phi_y}(\frac{\bar{\phi}_y+\sigma_y}{\bar{v}+\Phi_y})_y-\frac{1}{\bar{v}}
(\frac{\bar{\phi}_y}{\bar{v}})_y\big) \\
&=:&F_{11}+F_{12}+F_{13},\\
F_2&=&-\frac{\kappa(\gamma-1)}{R\bar{v}}\Psi_y\Psi_{yy}+\frac{\Psi_y}{\bar{v}+\Phi_y}(k_1(\bar{v})\Phi_y+k_2(\bar{v},
\bar{\phi}_y)\Phi_y
+\mu\Psi_{yy}-Rw) \\
&&-\frac{\kappa\Phi_y}{\bar{v}(\bar{v}+\Phi_y)}(w_y-\frac{1}{\bar{v}}\bar{\theta}_y\Phi_y)
+\frac{\lambda^2}{2}\Psi_y\big(\frac{\bar{\phi}_y+\sigma_y}{\bar{v}+\Phi_y}\big)^2
-\lambda^2\frac{\Psi_y}{\bar{v}+\Phi_y}\big(\frac{\bar{\phi}_y+\sigma_y}{\bar{v}+\Phi_y}\big)_y\\
&&-\lambda^2\big(\frac{\bar{\phi}_y+\sigma_y}{\bar{v}+\Phi_y}\sigma_t-s\frac{(\bar{\phi}_y+\sigma_y)^2}{\bar{v}+\Phi_y}
+\frac{s\bar{\phi}^2_y}{\bar{v}^2}\big)-\frac{\kappa(\gamma-1)}{R\bar{v}}
\big(\sigma-(\bar{v}+\Phi_y)e^{\bar{\phi}+\sigma} \\
&&+\bar{v}e^{\bar{\phi}}+e^{\phi_\pm}\Phi_y
-\frac{\lambda^2}{2}(\frac{(\bar{\phi}_y+\sigma_y)^2}{\bar{v}+\Phi_y}-\frac{\bar{\phi}^2_y}{\bar{v}})\big)_y \\
&=:&F_{21}+F_{22},
\end{eqnarray*}
and
\begin{eqnarray*}
F_3=\bar{v}e^{\bar{\phi}}(1+\sigma-e^\sigma)+\Phi_y
e^{\bar{\phi}}(1-e^\sigma).
\end{eqnarray*}
In the above derivation, we have used the equality:
\begin{eqnarray*}
\frac{R\bar{\theta}+1-\mu\bar{u}_y}{\bar{v}}=k_1(\bar{v})+k_2(\bar{v},
\bar{\phi}_y)
\end{eqnarray*}
which is derived by (\ref{model13}) or (\ref{model14}), and
(\ref{initial15}), here
\begin{eqnarray}
k_1(\bar{v}):=\frac{R\theta_\pm+1}{v_\pm}-s^2(\bar{v}-v_\pm),\quad k_2(\bar{v},
\bar{\phi}_y):=\frac{\lambda^2}{2}(\frac{\bar{\phi}_y}{\bar{v}})^2-\frac{\lambda^2}{\bar{v}}
(\frac{\bar{\phi}_y}{\bar{v}})_y. \label{def-k}
\end{eqnarray}

Then we have the following Theorem.
\begin{thm}
Under the assumptions of Theorem 1.2, the problem (\ref{model31})
and (\ref{initial32}) has a unique global solution $(\Phi, \Psi,
W,\sigma)$ satisfying $\Phi, \Psi, W,\sigma\in C([0, +\infty);
H^2(\mathbb{R})), \Phi_y\in L^2([0, +\infty)$; $H^1(\mathbb{R})),
\Psi_y, W_y,\sigma\in L^2([0, +\infty); H^2(\mathbb{R}))$ and
\begin{eqnarray}
&&\|(\Phi, \Psi,W)(t)\|^2 +\|(\varphi, \psi,
w)(t)\|^2_1+\|\sigma\|_2^2\notag\\
&+&\int_{0}^{t}(\|(\bar{v}^{\frac{1}{2}}_y\Psi,
\bar{v}^{\frac{1}{2}}_yW)(\tau)\|^2 +\|\varphi(\tau)\|^2_1+\|(\psi,
w,\sigma)\|^2_2)\mathrm{d}\tau \leq C\label{inequality31}
\end{eqnarray}
for $t\in[0, \infty)$. %where $(\varphi, \psi, w)$ is defined in
%(\ref{relation30}) and (\ref{relation36}), and $(\varphi_0, \psi_0,
%w_0)$ is the initial data corresponding to $(\varphi, \psi, w)$ (see
%(\ref{initial31})).
\end{thm}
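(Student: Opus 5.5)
The proof follows the standard continuation argument. We combine local existence with a uniform a priori estimate, in the spirit of \cite{KM,MN,DLZ}.

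\textbf{Local existence.} For the integrated system (\ref{model31}) with (\ref{initial32}), local existence is standard. The function $\sigma$ is determined elliptically from $\Phi_y$ through the Poisson equation (\ref{model32})$_4$, whose linear part $-\lambda^2(\sigma_y/\bar v)_y+\bar v e^{\bar\phi}\sigma$ is coercive. This gives $\|\sigma\|_{k+2}\le C\|\Phi_y\|_k$ for small data.

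\textbf{A priori estimate.} Set
\[
N(T)=\sup_{0\le t\le T}\big(\|(\Phi,\Psi,W)(t)\|^2+\|(\varphi,\psi,w)(t)\|_1^2+\|\sigma(t)\|_2^2\big).
\]
Under $N(T)\le\varepsilon^2$ and $|v_+-v_-|\le\delta$, with both small, we show that (\ref{inequality31}) holds with $C$ depending only on the initial norm. The estimate is built in four layers.

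\emph{(i) Basic $L^2$ estimate.} Multiply (\ref{model32})$_1$ by $k_1(\bar v)\Phi$, (\ref{model32})$_2$ by $\bar v\Psi$ and (\ref{model32})$_3$ by $W/\bar\theta$-type weights, chosen as in \cite{KM} so that the $\Phi_y\Psi$, $\Psi_yW$ and $W_y\Psi$ cross terms cancel up to $y$-derivatives. The key sign comes from $k_1'(\bar v)=-s^2$ together with $\bar v_y>0$. Since $(k_1)_t-s(k_1)_y=s^3\bar v_y>0$, this produces the good terms $\int\bar v_y(\Phi^2+\Psi^2+W^2)$ exactly as for Navier--Stokes. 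Heat conduction and viscosity give $\int(\Psi_y^2+W_y^2)$. The $k_2$ terms are $O(\delta)\bar v_y$ by (\ref{inequality11})--(\ref{inequality12}), since $\bar\phi_y$ and $\bar\phi_{yy}$ are bounded by $C\delta\bar v_y$, so they are absorbed into the good terms. The inhomogeneous terms $-\frac{\gamma-1}{\bar v}\bar u_y\Psi$, $s\bar u_y\Psi$ and $\frac{\kappa}{\bar v^2}\bar\theta_y\Phi_y$ are likewise $O(\delta)$ times the weighted norms plus $\|\Phi_y\|^2$. The nonlinear parts $F_{11},F_{21},F_3$ are bounded by $C\varepsilon$ times the dissipation.

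\emph{(ii) Recovering $\|\Phi_y\|^2$.} Multiplying (\ref{model32})$_2$ by $-\Phi_y$ and using (\ref{model32})$_1$ gives $\int k_1\Phi_y^2$ dissipation, with the term $\mu\Psi_{yy}\Phi_y/\bar v$ handled by the usual rewriting $\Psi_{yy}=\Phi_{ty}-s\Phi_{yy}$.

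\emph{(iii) Electrostatic terms (the main obstacle).} The terms $F_{12}$, $F_{13}$ and $F_{22}$ are linear in $\sigma$ and $\sigma_y$, and $F_{22}$ also contains $\sigma_t$, so they are not small a priori. To handle them:
\begin{itemize}
\item Test the Poisson equation (\ref{model32})$_4$ against $\Psi$ after integrating, and against $\sigma$. Together with the elliptic estimate, this yields $\|\sigma\|_2\le C\|\Phi_y\|$ plus higher-order terms, and $\|\sigma_t\|_1\le C\|\Psi_{yy}\|+\dots$, the latter by differentiating the Poisson equation in $t$ and using $\Phi_{yt}=s\Phi_{yy}+\Psi_{yy}$.
\item In $F_{13}$, write $-\lambda^2(\sigma_y/\bar v)_y/\bar v$ via the Poisson equation as $(-\bar v e^{\bar\phi}\sigma-\Phi_y e^{\bar\phi})/\bar v$ plus error terms. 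Its product with $\bar v\Psi$ then becomes $e^{\bar\phi}\Phi_y\Psi+\bar v e^{\bar\phi}\sigma\Psi$. The first of these cancels, to leading order, the term $e^{\phi_\pm}\Phi_y$ built into $W$ through (\ref{relation35}). The second, after integrating by parts against the Poisson equation again, gives a nonnegative quadratic form in $(\sigma,\sigma_y)$ up to $O(\delta)$ terms.
\end{itemize}
Arranging this cancellation exactly, with the $e^{\bar\phi}$ versus $e^{\phi_\pm}$ mismatch being $O(\delta)\bar v_y$-small, is where I expect the real work to lie.

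\emph{(iv) Higher-order estimates.} Differentiate the equations once for $(\varphi,\psi,w)$ in $H^1$. Multiply (\ref{model30})$_2$ by $-\psi_{yy}$ and (\ref{equality30}) by $-w_{yy}$, and handle $\varphi_y$ by the standard combination of $\varphi_y$ with $\psi_y$. Elliptic regularity then upgrades $\sigma$ to $H^2$, completing the dissipation bound in (\ref{inequality31}).

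\textbf{Continuation and uniqueness.} Summing (i)--(iv) with suitable weights and choosing $\varepsilon$ and $\delta$ small closes the bound $N(T)\le C N(0)$. The continuation argument then gives the global solution, and uniqueness follows from the local theory.
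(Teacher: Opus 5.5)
Your overall architecture matches the paper's (local existence, the a priori estimate of Proposition 3.2, continuation). However, the basic $L^2$ estimate in step (i) fails as you set it up. With the multipliers $k_1(\bar v)\Phi$ and $\bar v\Psi$ one has
\[
k_1\Phi(\Phi_t-s\Phi_y)=\big(\tfrac12k_1\Phi^2\big)_t-\big(\tfrac s2k_1\Phi^2\big)_y-\tfrac{s^3}{2}\bar v_y\Phi^2 ,
\]
so $\frac{d}{dt}\int\frac12k_1\Phi^2=\frac{s^3}{2}\int\bar v_y\Phi^2+\int k_1\Phi\Psi_y$. The positivity of $(k_1)_t-s(k_1)_y=s^3\bar v_y$ therefore works \emph{against} you: it is a source, not dissipation, because $k_1$ decreases along the 3-shock. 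In addition, the cross terms leave $-k_1(\Phi\Psi)_y=-(k_1\Phi\Psi)_y-s^2\bar v_y\Phi\Psi$. The $\bar v_y$-weighted form in $(\Phi,\Psi)$ is then indefinite, and nothing in the scheme controls $\int_0^t\!\int\bar v_y\Phi^2$.

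The paper instead uses the multipliers $\Phi$, $\frac{\bar v}{k_1(\bar v)}\Psi$ and $\frac{R}{k_1(\bar v)(k_1(\bar v)-e^{\phi_\pm})}W$. With these, the $\Phi\Psi$ cross terms form the exact derivative $-(\Phi\Psi)_y$, and the $\Psi W$ cross terms give $(\frac{R}{k_1}\Psi W)_y-(\frac{R}{k_1})_y\Psi W$. Since $\bar v/k_1$ and $1/(k_1(k_1-e^{\phi_\pm}))$ increase with $\bar v$, the transport terms produce $\bar v_y\Psi^2$ and $\bar v_yW^2$. No $\bar v_y\Phi^2$ term arises; $\Phi$ is controlled only through $\int_0^t\|\Phi_y\|^2$.

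You also treat the $\bar u_y\Psi$ terms as $O(\delta)$ perturbations, which is wrong. Since $\bar u_y=-s\bar v_y$, the contributions of $-\frac{\gamma-1}{\bar v}\bar u_y\Psi$ and $s\bar u_y\Psi$ are of the same order as the good terms. They must stay inside the $\bar v_y$-weighted quadratic form, as $\frac{\gamma-1}{k_1}\bar u_y\Psi^2$ does in $H_1$. Proving that form coercive, \eqref{estimate411}, is the Kawashima--Matsumura computation redone with \eqref{relation41}--\eqref{relation42}.

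Step (iii) also misses the working mechanism. Substituting the Poisson equation into $F_{13}$ gives $-e^{\bar\phi}\sigma-\frac{e^{\bar\phi}}{\bar v}\Phi_y$. The second piece does not cancel the $e^{\phi_\pm}\Phi$ built into $W$: in the $\Psi$-equation that enters only through $F_{12}$, with factor $\frac{\gamma-1}{\bar v}$, where it is already balanced by $-e^{\bar\phi}\Phi_y$. The substitution merely strips the electron pressure out of $k_1$. What remains, $-\bar ve^{\bar\phi}\sigma\Psi$, is linear in the undifferentiated $\Psi$ with an $O(1)$ coefficient, so it is not a quadratic form in $(\sigma,\sigma_y)$. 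Re-substituting the Poisson equation just undoes the first substitution and returns $F_{13}$.

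The missing ingredient is the continuity equation. The paper integrates $\frac{\bar v}{k_1}F_{13}\Psi$ by parts to isolate $\lambda^2\sigma_y\Psi_y$ and writes $\Psi_y=\Phi_t-s\Phi_y$. The linearized Poisson equation then turns $\lambda^2\sigma_t\Phi_y$ into $-\partial_t\big(\frac{\lambda^2\bar v}{2}\sigma^2+\frac{\lambda^4}{2\bar v}e^{-\bar\phi}\sigma_y^2\big)$, and $-\lambda^2s\sigma_y\Phi_y$ into $-\frac{\lambda^2s}{2}\bar v_y\sigma^2$, up to divergences; see \eqref{equality413}--\eqref{equality415}.

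The same gap appears in step (ii). There $F_{13}$ contributes $\frac{\lambda^2}{\bar v}\Phi_y\sigma_{yy}$ with an $O(1)$ coefficient. A bound $\|\sigma\|_2\le C\|\Phi_y\|$ with an unquantified $C$ cannot absorb this into $\int k_1\Phi_y^2$. The paper adds the Poisson equation tested against $-\frac{\lambda^2}{\bar v}e^{-\bar\phi}\sigma_{yy}$, see \eqref{equality422}. It then uses that $(k_1+k_2)\Phi_y^2+\frac{\lambda^4}{\bar v}\sigma_{yy}^2-\frac{2\lambda^2}{\bar v}\Phi_y\sigma_{yy}$ is positive definite, which relies on $k_1(\bar v)\bar v>1$, i.e. on the ion pressure $R\theta_\pm/v_\pm>0$.
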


\vskip0.2cm \noindent{\bf The proof of Theorem 1.2.} Form the
relation  $(\varphi, \psi, w, \sigma)=(v, u, \theta,
\phi)(t,y)-(\bar{v}, \bar{u}, \bar{\theta}, \bar{\phi})(y)$ and
Theorem 3.1, we can obtain the global existence of the solution
$(v,u,\theta,\phi)$ of the Cauchy problem (\ref{model12}) and
(\ref{initial14}). In the following, we need to prove
(\ref{inequality13}). In fact, from (\ref{model30}),
(\ref{relation30}), (\ref{relation35}), (\ref{relation36}) and
(\ref{inequality31}), we can obtain
\begin{eqnarray*}
 \int_0^\infty  (\|\varphi_y\|^2 + \|\psi_y\|^2+
 \|w_y\|^2+\|\sigma_y\|^2)\mathrm{d}t <+\infty,
\end{eqnarray*}
and
\begin{eqnarray*}
 \int_0^\infty |\frac{\mathrm{d}}{\mathrm{d}t}(\|\varphi_y\|^2 + \|\psi_y\|^2+
 \|w_y\|^2+\|\sigma_y\|^2)|\mathrm{d}t <+\infty,
\end{eqnarray*}
which easily leads to
\begin{eqnarray}
 \lim_{t\to +\infty}\|(\varphi_y,\psi_y,w_y,\sigma_y)(t)\|=0.
 \label{inequality32}
\end{eqnarray}
Then using Sobolev inequality, (\ref{inequality32}) and $(\varphi,
\psi, w, \sigma)=(v, u, \theta, \phi)(t,y)-(\bar{v}, \bar{u},
\bar{\theta}, \bar{\phi})(y)$, we immediately obtain the large time
behavior of the solution, that is, (\ref{inequality13}). This
completes the proof of Theorem 1.2.

\

In the following, it is sufficient to prove Theorem 3.1. The local
existence of a small solution of (\ref{model31}) and
(\ref{initial32}) can be shown in the standard way (cf.
\cite{KN,TYZZ,ZLZ}, etc.). So, for the proof of Theorem 3.1, it is
suffice to show the following a priori estimates, because the
continuation argument used in the proof of Theorem 3.1 is also
applicable to this case.

\begin{prop}
(a priori estimate) Let $(\Phi, \Psi, W,\sigma)$ (for some $T>0$ )
be a solution of the problem (\ref{model31}) and (\ref{initial32}).
Then there exist positive constants $\delta_2(\leq\delta_1),
\varepsilon_2(\le\varepsilon_1)$ and $C$, which are independent of
$T$ and $(v_\pm, u_\pm, \theta_\pm, \phi_\pm)$ such that if
$|v_+-v_-|\le\delta_2$, and
\begin{eqnarray}
E(t)=\sup_{0\le\tau\le t}(\|(\Phi, \Psi, W)(\tau)\|+\|(\varphi,
\psi,
w)(\tau)\|^2_1+\|\sigma\|_2^2)\le\varepsilon_2\label{assumption31}
\end{eqnarray}
for any $t\in[0,T]$, then the following estimate holds for $t\in[0,
T]$,
\begin{eqnarray}
E(t)^2+\int_{0}^{t}(\|\bar{v}^{\frac{1}{2}}_y(\Psi,
W,\sigma)(\tau)\|^2 +\|\varphi(\tau)\|^2_1+\|(\psi,
w,\sigma)\|^2_2)d\tau\le CE^2_0.\label{inequality33}
\end{eqnarray}
\end{prop}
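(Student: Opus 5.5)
We follow the Kawashima--Matsumura energy method adapted to the integrated system (\ref{model32}), closing everything under the smallness assumption (\ref{assumption31}) and $|v_+-v_-|\le\delta_2$.

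\textbf{Step 1: basic $L^2$ estimate for $(\Phi,\Psi,W)$.} Multiply (\ref{model32})$_1$ by $k_1(\bar v)\Phi$, (\ref{model32})$_2$ by $\bar v\Psi$, and (\ref{model32})$_3$ by $\frac{\gamma-1}{R}\cdot\frac{R}{\bar\theta}\,W$ (or an equivalent weight chosen so that the $\Phi_y\Psi$, $\Psi_yW$ and $W_y\Psi$ cross terms cancel modulo exact derivatives), then add and integrate over $\mathbb R$. The role of $k_1$ is the same as $k(\bar v)$ in \cite{KM}. Since $k_1'(\bar v)=-s^2$ and $\bar v_y>0$, the term $-\frac12(sk_1)_y\Phi^2$ produced by the transport part has the good sign only after it is combined with the $\Psi$ terms. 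Following \cite{KM}, one therefore obtains a quadratic form in $(\Psi,W)$ weighted by $\bar v_y$. Using the R--H relations (\ref{relation12}) and the profile estimates (\ref{inequality11})--(\ref{inequality12}) of Theorem 1.1 (with $|\bar u_y|,|\bar\theta_y|,|\bar\phi_y|\sim\bar v_y$), we show this form is positive definite for small $\delta$. This yields
\[
\|(\Phi,\Psi,W)(t)\|^2+\int_0^t\big(\|\bar v_y^{1/2}(\Psi,W)\|^2+\|(\Psi_y,W_y)\|^2\big)d\tau\le C\|(\Phi_0,\Psi_0,W_0)\|^2+\text{(remainders)}.
\]
The $k_2(\bar v,\bar\phi_y)\Phi_y$ terms are $O(\delta^2)\bar v_y$-small because $k_2=O(|\bar\phi_y|^2+|\bar\phi_{yy}|)\le C\delta\,\bar v_y$. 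They are absorbed via $\int\bar v_y|\Phi_y||\Psi|\le\eta\|\Phi_y\|^2+C_\eta\delta\|\bar v_y^{1/2}\Psi\|^2$.

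\textbf{Step 2: dissipation for $\Phi_y=\varphi$ and the potential.} Multiply (\ref{model32})$_2$ by $-\Phi_y$ and use (\ref{model32})$_1$ in the form $\Psi_t\Phi_y=(\Psi\Phi_y)_t-(\Psi\Phi_t)_y+\ldots$, which replaces $\Psi_{yy}$ by $(\Phi_t-s\Phi_y)_y$. This gives $\frac{d}{dt}\int\frac{\mu}{2\bar v}\Phi_y^2$ together with the good term $\int\frac{k_1}{\bar v}\Phi_y^2$ (note $k_1>0$). The term $\frac{R}{\bar v}W_y\Phi_y$ is controlled by Step 1.

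For $\sigma$ we use (\ref{model32})$_4$. Multiplying by $\sigma$ and $-\sigma_{yy}$ shows, by elliptic estimates, that $\|\sigma\|_2\le C\|\Phi_y\|_1+$ higher-order terms, since the operator $-\lambda^2(\cdot/\bar v)_y+\bar ve^{\bar\phi}$ is coercive. It follows that $\int\|\sigma\|_2^2$ is controlled once $\int\|\varphi\|_1^2$ is. The electrostatic pieces $F_{12},F_{13},F_{22}$ contain $\sigma_t$ and $\sigma_{yyy}$-type terms. For these we differentiate the Poisson equation in $t$ to express $\sigma_t$ elliptically in terms of $\varphi_t=\psi_y+s\varphi_y$.

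\textbf{Step 3: higher-order estimates.} Differentiate the original perturbation system (\ref{model30})/(\ref{equality30}) and multiply by $\psi_y$- and $w_y$-type multipliers (equivalently, $-\psi_{yy}$ and $-w_{yy}$). Also differentiate the $\Phi_y$ estimate once more to get $\varphi_y$ through the standard $\mu(\varphi_y/\bar v)_t$ trick. Nonlinear terms are bounded by $C(\varepsilon_2+\delta)$ times the dissipation, using Sobolev's inequality $\|f\|_{L^\infty}\le\|f\|^{1/2}\|f_y\|^{1/2}$. Summing Steps 1--3 with suitable small constants and taking $\varepsilon_2,\delta_2$ small gives (\ref{inequality33}).

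\textbf{Main obstacle.} We expect the hardest step to be Step 1, namely the positivity of the $\bar v_y$-weighted quadratic form in the presence of the electrostatic correction $k_2$ and the extra coupling $e^{\phi_\pm}\Psi_y$ in (\ref{model32})$_3$. Closely related is controlling $F_{22}$, which contains $\sigma_t$ and $(\bar\phi_y+\sigma_y)^2$ differences that are only linear in the perturbation. Our first attempt is to rewrite $\lambda^2\frac{\bar\phi_y+\sigma_y}{v}\sigma_t$ via the Poisson equation as an exact time derivative of an energy $\int(\frac{\lambda^2}{2v}\sigma_y^2+\bar ve^{\bar\phi}\sigma^2/2)$, modulo terms carrying $\bar\phi_y=O(\delta)\bar v_y$. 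The leftover linear terms would then be absorbed using the factor $\delta$ from the profile decay (\ref{inequality12}).
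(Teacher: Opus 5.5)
\textbf{Gap 1: the Step 1 weights.} Your multipliers in Step 1 do not work, and no later combination repairs them. With $k_1(\bar v)\Phi$ as the multiplier for \eqref{model32}$_1$, the transport term leaves $\frac{s}{2}(k_1)_y\Phi^2=-\frac{s^3}{2}\bar v_y\Phi^2$ on the dissipative side. The cross terms $-k_1(\Phi\Psi_y+\Phi_y\Psi)$ leave $-s^2\bar v_y\Phi\Psi$ after integration by parts. Nothing dissipates $\Phi$ itself, so a $\bar v_y$-weighted form with a negative $\Phi^2$ entry cannot become positive ``after it is combined with the $\Psi$ terms''. Likewise, the weight $\frac{\gamma-1}{\bar\theta}W$ does not turn $RW_y\Psi$ and the $(k_1-e^{\phi_\pm})\Psi_yW$ term into an exact derivative, since the coefficients differ at $O(1)$. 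As in \cite{KM}, the paper uses $\Phi$, $\frac{\bar v}{k_1(\bar v)}\Psi$ and $\frac{R}{k_1(\bar v)(k_1(\bar v)-e^{\phi_\pm})}W$. The $\Phi$-weight must be constant so that $-(\Phi\Psi)_y$ is exact. Only $\bar v_y$-weighted $(\Psi,W)$ terms then remain, and the $\bar v$-dependence of the other two weights (through $k_1'=-s^2$) together with the R--H relations gives $H_1\ge c(\bar v_y\Psi^2+\bar v_yW^2+\Psi_y^2+W_y^2)$.

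\textbf{Gap 2: the electrostatic coupling.} This is the more serious gap. You treat the electrostatic terms perturbatively, but the leading part of $F_{13}$, namely $-\frac{\lambda^2}{\bar v}(\frac{\sigma_y}{\bar v})_y$, is linear in the perturbation with an $O(1)$ coefficient.

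In the basic estimate, Lemma 4.1-type elliptic bounds only give $|\int\frac{\bar v}{k_1}F_{13}\Psi\,dy|\le C\|\sigma_y\|\|\Psi_y\|\le\eta\|\Psi_y\|^2+C_\eta\|\varphi\|^2$ with $C_\eta$ not small. Your $\varphi$-estimate has $C\int_0^t\|(\Psi_y,W_y)\|^2d\tau$ with an $O(1)$ constant on its right-hand side, so the two estimates cannot be closed. The paper instead uses $\Psi_y=\Phi_t-s\Phi_y$ and the Poisson equation \eqref{model32}$_4$, see \eqref{equality413}--\eqref{equality415}. This turns the term into $-\frac{d}{dt}\int(\frac{\lambda^2\bar v}{2}\sigma^2+\frac{\lambda^4}{2\bar v}e^{-\bar\phi}\sigma_y^2)dy$, plus the good term $\frac{\lambda^2 s}{2}\bar v_y\sigma^2$ and small remainders. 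That is why $\|(\sigma,\sigma_y)(t)\|^2$ and $\|\bar v_y^{1/2}\sigma\|^2$ appear on the left of \eqref{estimate41}.

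Your ``Main obstacle'' paragraph has the idea of a $\sigma$-energy appearing as a time derivative, but attaches it to the wrong term. The linear part of $\lambda^2\frac{\bar\phi_y+\sigma_y}{v}\sigma_t$ in the energy equation carries $\bar\phi_y$ and is already $O(\delta)$. When you carry out the conversion, keep the coefficient $\frac{1}{k_1(\bar v)\bar v}$, which tends to $\frac{1}{R\theta_\pm+1}$ rather than $1$. The leftover $\lambda^2(\frac{1}{k_1\bar v}-1)\sigma_y\Psi_y$ is therefore not $O(\delta)$, contrary to what the paper's bound for $G_{41}$ suggests. It can, however, be run through the same conversion, with the extra term $\lambda^2(\frac{1}{k_1\bar v})_y\sigma_t\Phi$ handled via $\sigma_t\Phi=(\sigma\Phi)_t-\sigma(s\Phi_y+\Psi_y)$.

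The same issue recurs in your Step 2. There $-\Phi_yF_{13}$ contributes the $O(1)$ cross term $\frac{\lambda^2}{\bar v^2}\sigma_{yy}\Phi_y$, so ``$k_1>0$'' does not suffice. The paper adds the Poisson equation multiplied by $-\frac{\lambda^2}{\bar v}e^{-\bar\phi}\sigma_{yy}$. It then uses that $(k_1+k_2)\Phi_y^2+\frac{\lambda^4}{\bar v}\sigma_{yy}^2-\frac{2\lambda^2}{\bar v}\Phi_y\sigma_{yy}$ is positive definite, because $k_1\bar v\approx R\theta_\pm+1>1$. Equivalently, inserting the Poisson equation lowers the effective coefficient from $k_1$ to $k_1-e^{\bar\phi}\approx R\theta_\pm/v_\pm>0$. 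This is also where $\int_0^t\|(\sigma_y,\sigma_{yy})\|^2d\tau$ in \eqref{estimate42} comes from.
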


\section{A priori estimate}
In this section, we prove the {\it a priori} estimates for the unknown function $(\Phi,\Psi,W,\sigma)(t,x)$ in Proposition 3.2. 
Let  $(\Phi,\Psi,W,\sigma)(t,x)$ be the classical solution  to the problem (\ref{model31}) and (\ref{initial32}) on $[0,T]$ with the regularity stated below.
\begin{eqnarray*}
(\Phi, \Psi, W,\sigma)\in C(0, T; H^2); \Phi_y\in L^2(0, T; H^1);
\Psi_y, W_y,\sigma_y\in L^2(0, T; H^2).
\end{eqnarray*}
Then from Theorem 1.1, the Sobolev inequality and
(\ref{assumption31}), it yields easily that there exist two
positive constants $c$ and $C$ such that
\begin{eqnarray}
    0<c\leq \bar v,v,\bar\theta,\theta\leq C,~|\bar u|,|u|\leq C~{\rm for}~t\in [0,T],
    \label{priori41}
\end{eqnarray}
provided that $\delta_2$ and $\varepsilon_2$ are small enough.

Moreover, recalling that $v_\pm e^{\phi_\pm}=1$ and $s>0$, it holds that 
\begin{eqnarray}
k_1(\bar v)\ge\frac{R\theta_\pm+1}{v_\pm}>0,~ k_1(\bar
v)-e^{\phi_\pm}\ge\frac{R\theta_\pm}{v_\pm}>0.\label{prori42}
\end{eqnarray}

The $H^2$-estimates of $\sigma$ and the $H^1$-estimates of $\sigma_t$ can be obtained as in [11], yielding the following lemma.
\begin{lema}
Under the assumptions of Proposition 3.2, there is a positive
constant $C$ such that it holds that
\begin{eqnarray*}
&&\|\sigma\|^2_1\le C\|\varphi\|^2,\label{inequality41}\\
&&\|\sigma_{yy}\|^2\le C\|\varphi\|^2+C(\varepsilon_2+\delta_2)\|\varphi_y\|^2,\label{inequality42}\\
&&\|\sigma_t\|^2_1\le
C(\|\Phi_{yy}\|^2+\|\Psi_{yy}\|^2)+C(\varepsilon_2+\delta_2)\|\varphi\|^2\label{inequality43}
\end{eqnarray*}
for all $ t\in[0,T]$.
\end{lema}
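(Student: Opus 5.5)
The plan is to treat the Poisson equation (\ref{model32})$_4$ as a linear uniformly elliptic equation for $\sigma$. Its coefficients are bounded above and below by (\ref{priori41}), and it is driven by $\varphi=\Phi_y$. I would write its left-hand side as
\begin{eqnarray*}
-\lambda^2\Big(\frac{\sigma_y}{v}\Big)_y+\lambda^2\Big(\frac{\bar\phi_y\,\varphi}{v\bar v}\Big)_y,
\end{eqnarray*}
using $\frac1v-\frac1{\bar v}=-\frac{\varphi}{v\bar v}$. The equation then reads
\begin{eqnarray*}
-\lambda^2\Big(\frac{\sigma_y}{v}\Big)_y+\bar v e^{\bar\phi}\sigma=-e^{\bar\phi}\varphi-\lambda^2\Big(\frac{\bar\phi_y\varphi}{v\bar v}\Big)_y+F_3 .
\end{eqnarray*}
Throughout I will use three smallness facts. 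First, $|\bar\phi_y|+|\bar\phi_{yy}|+|\bar v_y|\le C\delta_2$ by Theorem 1.1. Second, $\|\sigma\|_{W^{1,\infty}}\le C\|\sigma\|_2\le C\sqrt{\varepsilon_2}$ by Sobolev and (\ref{assumption31}). Third, $|F_3|\le C(\sigma^2+|\varphi||\sigma|)\le C\sqrt{\varepsilon_2}(|\sigma|+|\varphi|)$.

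\emph{Step 1 ($H^1$ bound).} I would multiply the rewritten equation by $\sigma$ and integrate over $\mathbb{R}$, integrating by parts in the two divergence terms. The left side gives $\lambda^2\int\sigma_y^2/v+\int\bar ve^{\bar\phi}\sigma^2\ge c\|\sigma\|_1^2$. On the right, $|\int e^{\bar\phi}\varphi\sigma|$ is handled by Young's inequality. The $\bar\phi_y$-term becomes $\lambda^2\int\frac{\bar\phi_y\varphi}{v\bar v}\sigma_y$, which is at most $C\delta_2\|\varphi\|\|\sigma_y\|$. The $F_3$-term is at most $C\sqrt{\varepsilon_2}(\|\sigma\|^2+\|\varphi\|\|\sigma\|)$. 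Absorbing everything into the left side gives $\|\sigma\|_1^2\le C\|\varphi\|^2$.

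\emph{Step 2 ($\sigma_{yy}$).} I would solve the equation pointwise for $\sigma_{yy}$:
\begin{eqnarray*}
\frac{\lambda^2}{v}\sigma_{yy}=\frac{\lambda^2 v_y}{v^2}\sigma_y+\bar ve^{\bar\phi}\sigma+e^{\bar\phi}\varphi+\lambda^2\Big(\frac{\bar\phi_y\varphi}{v\bar v}\Big)_y-F_3 ,\qquad v_y=\bar v_y+\varphi_y .
\end{eqnarray*}
I then take $L^2$ norms term by term. The term $\bar v_y\sigma_y$ and the terms $\sigma,\varphi$ are bounded by $C\|\varphi\|$ using Step 1. The term $\varphi_y\sigma_y$ is at most $\|\sigma_y\|_{L^\infty}\|\varphi_y\|\le C\sqrt{\varepsilon_2}\|\varphi_y\|$. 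Expanding $(\bar\phi_y\varphi/(v\bar v))_y$ produces $\bar\phi_{yy}\varphi$, terms $\bar\phi_y\varphi(\bar v_y+\varphi_y)$, and $\bar\phi_y\varphi_y$, which are bounded by $C\delta_2(\|\varphi\|+\|\varphi_y\|)$. Finally $F_3$ is at most $C\|\varphi\|$. Together these give $\|\sigma_{yy}\|^2\le C\|\varphi\|^2+C(\varepsilon_2+\delta_2)\|\varphi_y\|^2$.

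\emph{Step 3 ($\sigma_t$).} I would differentiate the Poisson equation in $t$, using $v_t=\varphi_t=s\Phi_{yy}+\Psi_{yy}$ from (\ref{model32})$_1$. This yields the same elliptic operator acting on $\sigma_t$, with source terms
\begin{eqnarray*}
-e^{\bar\phi}\varphi_t,\quad -\lambda^2\Big(\frac{\sigma_y\varphi_t}{v^2}\Big)_y,\quad \lambda^2\Big(\bar\phi_y\big(\tfrac{\varphi}{v\bar v}\big)_t\Big)_y,\quad \partial_tF_3=-\bar ve^{\bar\phi}(e^\sigma-1)\sigma_t+\varphi_te^{\bar\phi}(1-e^\sigma)-\varphi e^{\bar\phi}e^\sigma\sigma_t .
\end{eqnarray*}
Testing with $\sigma_t$ and integrating the divergence terms by parts, the left side controls $c\|\sigma_t\|_1^2$. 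The sources are then bounded as follows. The term $e^{\bar\phi}\varphi_t$ gives $C\|\varphi_t\|^2\le C(\|\Phi_{yy}\|^2+\|\Psi_{yy}\|^2)$. The terms $\sigma_y\varphi_t$ and $\bar\phi_y(\varphi/(v\bar v))_t$ carry a factor $\sqrt{\varepsilon_2}$ or $\delta_2$. The parts of $\partial_tF_3$ multiplying $\sigma_t$ carry $|\sigma|_{L^\infty}+|\varphi|_{L^\infty}$ and are absorbed; the part $\varphi_te^{\bar\phi}(1-e^\sigma)$ is again bounded by $\|\varphi_t\|$. Terms from $\bar v_y$ and $\bar\phi_y$ multiplying $\varphi$ produce the remainder $C(\varepsilon_2+\delta_2)\|\varphi\|^2$. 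This gives the third inequality.

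The main obstacle I expect is Step 3. The quasilinear coefficient $1/v$ depends on the unknown $\varphi$, so the time derivative brings in $\varphi_t\sigma_y$ and related cross terms, and $\varphi_t$ contains second derivatives $\Phi_{yy},\Psi_{yy}$. I would first check that every such term carries either an $L^\infty$-small factor ($\sigma_y$, $\sigma$, $\varphi$) or a profile derivative of order $\delta_2$, so that it can be placed on the right-hand side without needing any bound on $\varphi_{yy}$.
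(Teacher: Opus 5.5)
Your proposal is correct and is essentially the argument the paper intends: the paper gives no details and only says these bounds follow from elliptic estimates for the Poisson equation as in reference [11], which amount to your three steps (testing with $\sigma$, solving pointwise for $\sigma_{yy}$, and testing the $t$-differentiated equation with $\sigma_t$ using $\varphi_t=s\Phi_{yy}+\Psi_{yy}$). One minor correction to Step 3: since $\big(\frac{\varphi}{v\bar v}\big)_t=\frac{\varphi_t}{v^2}$, every source term there contains $\varphi_t$ or $\sigma_t$, so the $\bar v_y$- or $\bar\phi_y$-weighted $\varphi$ terms you mention never appear, and you actually get $\|\sigma_t\|_1^2\le C(\|\Phi_{yy}\|^2+\|\Psi_{yy}\|^2)$, which implies the stated bound.
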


\begin{rmk}
With Lemma 4.1 at hands, using (\ref{initial16}) and
(\ref{assumption12}), we immediately have  $\tilde{W}_{20}(x)\in
L^2(\mathbb{R})$. Furthermore, it holds that
$\tilde{W}_{0}=\tilde{W}_{10}+\tilde{W}_{20}\in L^2(\mathbb{R})$.
\end{rmk}

Now let us derive the basic energy estimate. First, multiplying
(\ref{model32})$_1$, (\ref{model32})$_2$ and (\ref{model32})$_3$ by
$\Phi, \frac{\bar{v}}{k_1(\bar v)}\Psi$ and $\frac{R}{k_1(\bar
v)(k_1(\bar v)-e^{\phi_\pm})}W$, respectively, 
and adding the resulting identities gives
\begin{eqnarray}
&&(\frac{1}{2}\Phi^2+\frac{\bar{v}}{2k_1(\bar
v)}\Psi^2+\frac{R^2}{2(\gamma-1)k_1(\bar v)(k_1(\bar
v)-e^{\phi_\pm})}W^2)_t
+B_{1y}+H_1\notag\\
&=&G_1+G_2+\frac{\bar{v}}{k_1(\bar v)}F_1\Psi+\frac{R}{k_1(\bar
v)(k_1(\bar v)-e^{\phi_\pm})}F_2W,\label{equality411}
\end{eqnarray}
where
\begin{eqnarray*}
B_1&=&-\frac{s}{2}\Phi^2-\frac{s\bar{v}}{2k_1(\bar
v)}\Psi^2-\frac{R^2s}{2(\gamma-1)k_1(\bar v)(k_1(\bar
v)-e^{\phi_\pm})}W^2-\Phi\Psi+\frac{R}{k_1(\bar v)}W\Psi
-\frac{\mu}{k_1(\bar v)}\Psi\Psi_y\\
&&-\frac{R\kappa}{\bar{v}k_1(\bar v)(k_1(\bar v)-e^{\phi_\pm})}(W_y+\frac{\gamma-1}{R}\bar{u}_y\Psi)W,\\
H_1&=&\big(\frac{\gamma-1}{k_1(\bar
v)}\bar{u}_y+(\frac{s\bar{v}}{2k_1(\bar v)})_y\big)\Psi^2
+\frac{\mu}{k_1(\bar v)}\Psi^2_y+(\frac{\mu}{k_1(\bar v)})_y\Psi\Psi_y\\
&&+\big(\frac{R^2s}{2(\gamma-1)k_1(\bar v)(k_1(\bar
v)-e^{\phi_\pm})}\big)_yW^2 +\frac{R\kappa}{\bar{v}k_1(\bar
v)(k_1(\bar v)-e^{\phi_\pm})}W^2_y-(\frac{R}{k_1(\bar v)})_y\Psi W,
\end{eqnarray*}
\begin{eqnarray*}
G_1&=&-\frac{\kappa(\gamma-1)}{\bar{v}k_1(\bar v)(k_1(\bar
v)-e^{\phi_\pm})}\bar{u}_y\Psi W_y -(\frac{R\kappa}{\bar{v}k_1(\bar
v)(k_1(\bar
v)-e^{\phi_\pm})})_y(W_y+\frac{\gamma-1}{R}\bar{u}_y\Psi)W\\
&&-\frac{R\kappa}{\bar{v}^2k_1(\bar v)(k_1(\bar v)-e^{\phi_\pm})}\bar{\theta}_y\Phi_yW,\\
G_2&=&\frac{k_2(\bar{v}, \bar{\phi}_y)}{k_1(\bar
v)}\Phi_y\Psi+\frac{Rk_2(\bar{v}, \bar{\phi}_y)}{k_1(\bar
v)(k_1(\bar v)-e^{\phi_\pm})}\Psi_yW.
\end{eqnarray*}
Then we have
\begin{lema}
Under the assumptions of Proposition 3.2, there is a positive
constant $C$ such that it holds for any $t\in[0,T]$,
\begin{eqnarray}
&&\|(\Phi,\Psi,W,\sigma,\sigma_y)(t)\|^2+\int_{0}^{t}(\|v^{\frac{1}{2}}_y(\Psi,
W,\sigma)(\tau)\|^2 +\|(\Psi_y,W_y)(\tau)\|^2)\mathrm{d}\tau\notag\\
&\le&C\|(\Phi_0,\Psi_0,W_0)\|^2+
C(\delta_2+\varepsilon_2)\int_{0}^{t}
\|(\Phi_y,\Phi_{yy},\Psi_{yy},w_y)(\tau)\|^2\mathrm{d}\tau.\label{estimate41}
\end{eqnarray}
\end{lema}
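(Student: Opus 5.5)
Integrate the weighted identity (\ref{equality411}) over $[0,t]\times\mathbb{R}$. The boundary flux $B_{1y}$ integrates to zero. The time-derivative term gives a quantity equivalent to $\|(\Phi,\Psi,W)(t)\|^2$, since by (\ref{priori41}) and (\ref{prori42}) the weights $\bar v/k_1(\bar v)$ and $R^2/((\gamma-1)k_1(k_1-e^{\phi_\pm}))$ are bounded above and below by positive constants. The $\sigma,\sigma_y$ parts of the left side of (\ref{estimate41}) are then recovered at the end from Lemma 4.1, once $\|\varphi\|=\|\Phi_y\|$ is controlled. Actually $\|\sigma\|_1^2\le C\|\Phi_y\|^2$ is not bounded by $\|(\Phi,\Psi,W)\|^2$, so I would treat the $\sigma$ terms as belonging to the right side via $\|\Phi_y(t)\|^2\le E(t)$. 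Alternatively, I would handle them by multiplying the linearized Poisson equation (\ref{model32})$_4$ by $\sigma$ and using $\Phi$ with (\ref{model32})$_1$. I would check which option closes; my first choice is the latter, using the identity $-\lambda^2(\sigma_y/\bar v)_y+\bar v e^{\bar\phi}\sigma=-e^{\bar\phi}\Phi_y+\dots$.

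The core step is the positivity of $H_1$, following Kawashima--Matsumura. The terms $\frac{\mu}{k_1}\Psi_y^2$ and $\frac{R\kappa}{\bar v k_1(k_1-e^{\phi_\pm})}W_y^2$ give the diffusive part directly. For the zeroth-order terms, use that $k_1$ is linear in $\bar v$ with $k_1'=-s^2$, and $\bar u_y=-s\bar v_y$. The coefficient of $\Psi^2$ is then
\[
\Big(-\tfrac{(\gamma-1)s}{k_1}+\tfrac{s}{2k_1}+\tfrac{s^3\bar v}{2k_1^2}\Big)\bar v_y .
\]
Near the shock, $s^2\bar v\approx k_1$ (since $s\approx\lambda_3$, $s^2v_\pm^2\approx R\gamma\theta_\pm+1$), so this is about $(1-(\gamma-1)+\dots)s\bar v_y/k_1$ plus $O(\delta)$. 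Together with the $W^2$ coefficient $\frac{R^2s^3}{2(\gamma-1)}\frac{2k_1-e^{\phi_\pm}}{k_1^2(k_1-e^{\phi_\pm})^2}\bar v_y$ and the cross term $-\frac{s^2R}{k_1^2}\bar v_y\Psi W$, this gives a quadratic form times $\bar v_y>0$. I would verify that it is positive definite, with discriminant governed by the sonic relation $s^2v_-^2=R\gamma\theta_-+1+O(\delta)$. This is the main obstacle. In KM the analogous form is positive definite exactly because of the Lax condition, and here the extra $+1$ and $e^{\phi_\pm}$ shifts must be tracked. I would compute the determinant at $\delta=0$ and show it is a positive multiple of $R\theta_-/v_-$ or similar, using (\ref{prori42}). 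The term $(\mu/k_1)_y\Psi\Psi_y$ is $O(\delta)\bar v_y^{1/2}|\Psi|\,|\Psi_y|$ and is absorbed.

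The remaining terms are perturbative. In $G_1$, each term carries a factor $\bar u_y$, $\bar v_y$ or $\bar\theta_y$, which is bounded by $C\delta\bar v_y^{1/2}$ times a function. Cauchy--Schwarz then bounds these by $C\delta(\|\bar v_y^{1/2}(\Psi,W)\|^2+\|W_y\|^2+\|\Phi_y\|^2)$. In $G_2$, the coefficient $k_2=O(|\bar\phi_y|^2+|\bar\phi_{yy}|)=O(\delta^2)$ with $|k_2|\le C\delta\bar v_y$ by (\ref{inequality11}) and (\ref{inequality12}), so these are likewise absorbed. For $\frac{\bar v}{k_1}F_1\Psi$ and $\frac{R}{k_1(k_1-e^{\phi_\pm})}F_2W$, I would integrate by parts wherever $F_1$ or $F_2$ contains a $y$-derivative (for example the Poisson terms $F_{13}$ and the last bracket in $F_{22}$), moving the derivative onto $\Psi$ or $W$. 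Quadratic terms are bounded by $\|(\Phi,\Psi,W)\|_{L^\infty}\le C\varepsilon_2^{1/2}$ times $\|(\Phi_y,\Phi_{yy},\Psi_{yy},\Psi_y,W_y,w_y)\|^2$, and $\sigma,\sigma_y,\sigma_t$ are converted to $\varphi$ by Lemma 4.1. The delicate pieces are the linear-looking electrostatic parts $F_{12}$ and $F_{22}$: the combination $\sigma-(\bar v+\Phi_y)e^{\bar\phi+\sigma}+\bar ve^{\bar\phi}+e^{\phi_\pm}\Phi_y$, whose linear part is $(1-\bar ve^{\bar\phi})\sigma+(e^{\phi_\pm}-e^{\bar\phi})\Phi_y$, is $O(\delta)$ times $(|\sigma|+|\Phi_y|)$. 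This is exactly why $e^{\phi_\pm}\Phi$ was added in $W$. After using $\|\sigma\|_1\le C\|\varphi\|$, everything lands in $C(\delta_2+\varepsilon_2)\int\|(\Phi_y,\Phi_{yy},\Psi_{yy},w_y)\|^2$ plus absorbable dissipation, which yields (\ref{estimate41}).
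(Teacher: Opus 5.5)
Your skeleton matches the paper's. You integrate (\ref{equality411}), get coercivity of $H_1$ as in Kawashima--Matsumura, and bound $G_1$, $G_2$, $F_{11}$, $F_{12}$, $F_2$ perturbatively. The gap is at $F_{13}$, which you list among the terms to be integrated by parts and absorbed. $F_{13}$ is not small. Its linear part is $-\frac{\lambda^2}{\bar v^2}\sigma_{yy}$ plus terms carrying $\bar v_y$, $\bar\phi_y$ or $\bar\phi_{yy}$, so one integration by parts gives
\[
\int_{\mathbb{R}}\frac{\bar v}{k_1(\bar v)}F_{13}\Psi\,\mathrm{d}y=\int_{\mathbb{R}}\frac{\lambda^2}{k_1(\bar v)\bar v}\,\sigma_y\Psi_y\,\mathrm{d}y+\cdots,\qquad k_1(\bar v)\bar v\to R\theta_\pm+1 .
\]
This is a linear coupling with an order-one coefficient. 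Cauchy--Schwarz and Lemma 4.1 only turn it into $\eta\|\Psi_y\|^2+C_\eta\|\Phi_y\|^2$ with $C_\eta=O(1)$. That is not of the form $C(\delta_2+\varepsilon_2)\|\Phi_y\|^2$ that (\ref{estimate41}) asserts. Nor can it be repaired when the lemmas are combined. The bound (\ref{estimate42}) for $\int_0^t\|\Phi_y\|^2$ itself uses $C\int_0^t\|(\Psi_y,W_y)\|^2$ from the present lemma. You would be left with two mutually dependent inequalities with $O(1)$ constants, which do not close.

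The missing idea is to treat this coupling exactly, as a time derivative. The paper substitutes $\Psi_y=\Phi_t-s\Phi_y$ from (\ref{model32})$_1$:
\[
\lambda^2\sigma_y\Psi_y=\lambda^2(\sigma_y\Phi)_t-\lambda^2(\sigma_t\Phi)_y+\lambda^2\sigma_t\Phi_y-\lambda^2 s\sigma_y\Phi_y .
\]
It then eliminates $\Phi_y$ using the Poisson equation (\ref{model32})$_4$. This makes $\lambda^2\sigma_t\Phi_y$ into $-\partial_t\big(\frac{\lambda^2\bar v}{2}\sigma^2+\frac{\lambda^4}{2\bar v}e^{-\bar\phi}\sigma_y^2\big)$ and $-\lambda^2 s\sigma_y\Phi_y$ into $-\frac{\lambda^2 s}{2}\bar v_y\sigma^2$. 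Both hold up to fluxes, $F_3$-terms and terms with small factors; see (\ref{equality413})--(\ref{estimate4143}). In the paper, this conversion is what produces the $\sigma$-terms in (\ref{estimate41}).

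When you carry it out, keep the exact weight $\lambda^2/(k_1(\bar v)\bar v)\approx\lambda^2/(R\theta_\pm+1)$. You will then find that the electrostatic part of the resulting functional enters with the sign opposite to $\frac12\Phi^2$. It is dominated by $\frac12\Phi^2$ because $k_1(\bar v)\bar v\to R\theta_\pm+1>1$, which is essentially (\ref{prori42}).

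Your elliptic bound $\|\sigma(t)\|_1\le C\|\Phi(t)\|+\cdots$ is a sound way to recover $\|(\sigma,\sigma_y)(t)\|$ from $\|\Phi(t)\|$. You get it by testing (\ref{model32})$_4$ with $\sigma$ and integrating $e^{\bar\phi}\Phi_y\sigma$ by parts. It cannot, however, replace the conversion above.

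Two corrections to your $H_1$ computation:
\begin{itemize}
\item $s^2\bar v/k_1(\bar v)\to(R\gamma\theta_\pm+1)/(R\theta_\pm+1)$, not $1$.
\item The term $-s\bar u_y\Psi$ in (\ref{model32})$_3$, multiplied by $\frac{R}{k_1(k_1-e^{\phi_\pm})}W$, adds a cross term $\frac{Rs^2}{k_1(k_1-e^{\phi_\pm})}\bar v_y\Psi W$. The net cross term is therefore $\frac{Rs^2e^{\phi_\pm}}{k_1^2(k_1-e^{\phi_\pm})}\bar v_y\Psi W$; without $e^{\phi_\pm}$ the two would cancel.
\end{itemize}
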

\noindent {\bf Proof.} It follows from the R-H condition
(\ref{relation12}) that
\begin{eqnarray*}
R(\theta_+-\theta_-)=\frac{R\theta_-+1}{v_-}(v_+-v_-)-s^2v_+(v_+-v_-),
\end{eqnarray*}
and
\begin{eqnarray*}
-\frac{(R\theta_-+1)\gamma}{(\gamma-1)v_-}(v_+-v_-)+\frac{s^2v_+}{\gamma-1}(v_+-v_-)
-(\phi_+-\phi_-)+\frac{s^2}{2}(v_+-v_-)^2=0.
\end{eqnarray*}
Namely,
\begin{eqnarray}
s^2=\frac{(R\theta_-+1)\gamma}{v_+v_-}(1-\frac{d_+}{1+d_+})
-\frac{(\ln{v_+}-\ln{v_-})(\gamma-1)}{(v_+-v_-)v_+}\frac{1}{1+d_+},\label{relation41}
\end{eqnarray}
and
\begin{eqnarray}
\theta_+=\theta_-(1-\frac{v_++v_-}{v_-}\frac{d_+}{1+d_+})
-\frac{v_++v_-}{Rv_-}\frac{d_+}{1+d+}+\frac{\gamma-1}{1+d_+}(\ln{v_+}-\ln{v_-})\label{relation42}
\end{eqnarray}
with $d_+=\frac{(\gamma-1)(v_+-v_-)}{2v_+}$. Then from
(\ref{relation41}) and (\ref{relation42}), similar to the arguments
in \cite{KM}, we can show
\begin{eqnarray}
H_1\ge
c(\bar{v}_y\Psi^2+\bar{v}_yW^2+\Psi^2_y+W^2_y),\label{estimate411}
\end{eqnarray}
provided that $\delta_2$ is small enough. Then integrating
(\ref{equality411}) with respect to $y$ over $\mathbb{R}$ and using
(\ref{priori41}), (\ref{prori42}) and (\ref{estimate411}) yield
\begin{eqnarray}
&&\frac{\mathrm{d}}{\mathrm{d}t}\int_{\mathbb{R}}(\Phi^2+\Psi^2+W^2)\mathrm{d}y+\int_{\mathbb{R}}(\bar
v_y\Psi^2+\bar v_yW^2+\Psi_y^2+W_y^2)\mathrm{d}y  -2\int_{\mathbb{R}}\frac{\bar{v}}{k_1(\bar{v})}F_1\Psi\mathrm{d}y \notag\\
&\leq&C\Big|\int_{\mathbb{R}}(G_1+G_2 
+\frac{R}{k_1(\bar{v})(k_1(\bar{v})-e^{\phi_\pm})}F_{2}W)\mathrm{d}y  \Big|.\label{equality41}
\end{eqnarray}

Now let us control each term on the right-hand side of
(\ref{equality41}). First, utilizing (\ref{priori41}),
(\ref{prori42}), (\ref{inequality12}) and the Young inequality, one
gets
\begin{eqnarray}
|\int_{\mathbb{R}}G_1\mathrm{d}y|\le\frac{1}{8}\int_{\mathbb{R}}(\bar{v}_yW^2+W^2_y)\mathrm{d}y
+C\delta_2\int_{\mathbb{R}}(\bar{v}_y\Psi^2+\bar{v}_yW^2+\Phi^2_y)\mathrm{d}y.\label{estimate412}
\end{eqnarray}
Moreover, from the definition of $k_1(\bar{v})$ and $k_2(\bar{v},
\bar{\phi}_y)$, we note that
\begin{eqnarray*}
G_2=-\big(\frac{\lambda^2}{\bar{v}^2k_1(\bar{v})}
\bar{\phi}_y\Phi_y\Psi+\frac{\lambda^2}{\bar{v}^2k_1(\bar{v})(k_1(\bar
v)-e^{\phi_\pm})} \bar{\phi}_y\Psi_yW\big)_y+\tilde{G}_2
\end{eqnarray*}
with
\begin{eqnarray*}
\tilde{G}_2&=&\frac{\lambda^2}{2k_1(\bar{v})}(\frac{\bar{\phi}_y}{\bar{v}})^2\Phi_y\Psi
+(\frac{\lambda^2}{\bar{v}k_1(\bar{v})})_y\frac{\bar{\phi}_y}{\bar{v}}\Phi_y\Psi
+\frac{\lambda^2}{2k_1(\bar{v})(k_1(\bar
v)-e^{\phi_\pm})}(\frac{\bar{\phi}_y}{\bar{v}})^2\Psi_yW\notag\\
&& +(\frac{\lambda^2}{\bar{v}k_1(\bar{v})(k_1(\bar
v)-e^{\phi_\pm})})_y\frac{\bar{\phi}_y}{\bar{v}}\Psi_yW
+\frac{\lambda^2}{\bar{v}^2k_1(\bar{v})}\bar{\phi}_y(\Phi_{yy}\Psi+\Phi_y\Psi_y)\notag\\
&& +\frac{\lambda^2}{\bar{v}^2k_1(\bar{v})(k_1(\bar
v)-e^{\phi_\pm})}\bar{\phi}_y(\Psi_{yy}W+\Psi_yW_y).
\end{eqnarray*}
Then making use of (\ref{inequality11}), (\ref{inequality12}) and
the Cauchy inequality, we can obtain
\begin{eqnarray}
|\int_{\mathbb{R}}G_2\mathrm{d}y|&=&|\int_{\mathbb{R}}\tilde{G}_2\mathrm{d}y|
\leq C|\int_{\mathbb{R}}(\bar\phi_y^2+\bar
v_y\bar\phi_y)(\Phi_y\Psi+\Psi_yW)\mathrm{d}y|\notag\\
&&+C|\int_{\mathbb{R}}\bar\phi_y(\Phi_{yy}\Psi+\Psi_{yy}W+\Phi_y\Psi_y+\Psi_yW_y)\mathrm{d}y| \notag\\
&\le&C\delta_2\int_{\mathbb{R}}(\bar{v}_y\Psi^2+\bar{v}_yW^2)\mathrm{d}y+C\delta_2(\|\Phi_y\|_1^2
+\|\Psi_y\|_1^2+\|W_y\|^2).\label{estimate413}
\end{eqnarray}

Now let us focus on $\frac{\bar{v}}{k_1(\bar{v})}F_1\Psi$. First,
from (\ref{priori41}), (\ref{prori42}), (\ref{inequality12}),
(\ref{assumption31}) and the Sobolev inequality and the Young
inequality, one gets
\begin{eqnarray}
|\int_{\mathbb{R}}\frac{\bar{v}}{k_1(\bar{v})}F_{11}\Psi
\mathrm{d}y|&\leq&
C\|\Psi\|_{L^\infty}\int_{\mathbb{R}}(\Phi^2_y+\Psi_y^2+\Psi_{yy}^2+w^2+\sigma_y^2)\mathrm{d}y
+C|\int_{\mathbb{R}}\bar\phi_y(\Psi\sigma_y+\Phi_y\Psi)\mathrm{d}y|\notag\\
&&+C\|\Psi\|_{L^\infty}(\|\bar\phi_y\|_{L^\infty}^2
+\|\bar\phi_{yy}\|_{L^\infty}+\|\bar v_y\|_{L^\infty}\|\bar\phi_y\|_{L^\infty})\int_{\mathbb{R}}\Phi^2_y\mathrm{d}y\notag\\
&\leq&\frac{1}{8}\int_{\mathbb{R}}\bar{v}_y\Psi^2\mathrm{d}y+
C(\delta_2+\varepsilon_2)(\|\Phi_y\|^2+\|\Psi_y\|_1^2+\|w\|^2+\|\sigma_y\|^2).\label{estimate4141}
\end{eqnarray}
Next, using (\ref{model13})$_4$, it is easy to see
\begin{eqnarray*}
\frac{\bar{v}}{k_1(\bar{v})}F_{12}\Psi&=&\frac{\gamma-1}{k_1(\bar{v})}\big(\sigma(v_\pm
e^{\phi_\pm}-\bar{v}e^{\bar{\phi}})+\bar{v}e^{\bar{\phi}}(1+\sigma-e^\sigma)+\Phi_y(e^{\phi_\pm}-e^{\bar{\phi}})
+e^{\bar{\phi}}\Phi_y(e^{\phi_\pm}-e^\sigma) \notag\\
&&-\frac{\lambda^2}{2}\big(\frac{\sigma^2_y+2\bar{\phi}_y\sigma_y}{\bar{v}+\Phi_y}-
\frac{\bar{\phi}^2_y\Phi_y}{\bar{v}(\bar{v}+\Phi_y)}\big)\big)\Psi \notag\\
&=&-\big(\frac{\lambda^2(\gamma-1)}{k_1(\bar{v})\bar
v}\bar\phi_y\Psi\sigma\big)_{y}+G_3,\label{equality412}
\end{eqnarray*}
here
\begin{eqnarray*}
G_3&=&(\frac{\lambda^2(\gamma-1)}{k_1(\bar{v})})_y\frac{\bar\phi_y}{\bar
v}\Psi\sigma+\frac{\lambda^2(\gamma-1)}{k_1(\bar{v})\bar
v}\bar\phi_y(\Psi_y\sigma+\Psi\sigma_y)+\frac{\gamma-1}{k_1(\bar{v})}\big(\bar{v}e^{\bar{\phi}}(1+\sigma-e^\sigma) \notag\\
&&+\Phi_y(e^{\phi_\pm}-e^{\bar{\phi}})
+e^{\bar{\phi}}\Phi_y(1-e^\sigma)-\frac{\lambda^2}{2}(\frac{\sigma^2_y+2\bar{\phi}_y\sigma_y}{\bar{v}+\Phi_y}-
\frac{\bar{\phi}^2_y\Phi_y}{\bar{v}(\bar{v}+\Phi_y)})\big)\Psi.
\end{eqnarray*}
Then same as (\ref{estimate4141}), one gets
\begin{align}
|\int_{\mathbb{R}}\frac{\bar{v}}{k_1(\bar{v})}F_{12}\Psi
\mathrm{d}y|&=|\int_{\mathbb{R}}G_3\mathrm{d}y| \leq
C|\int_{\mathbb{R}}(\bar
v_y\bar\phi_y\Psi\sigma+\bar\phi_y\Psi_y\sigma+\bar\phi_y\Psi\sigma_y+\sigma^2\Psi+\bar\phi_yy\Phi_y\Psi
\notag\\
&\qquad\qquad\qquad\quad +\Phi_y\Psi\sigma+\sigma_y^2\Psi+\bar\phi_y\sigma_y\Psi+\bar\phi_y^2\Phi_y\Psi)\mathrm{d}y| \notag\\
&\le\frac{1}{8}\int_{\mathbb{R}}\bar{v}_y\Psi^2\mathrm{d}y
+C(\delta_2+\varepsilon_2)(\|\sigma\|_1^2+\|\Phi_y\|^2+\|\Psi_y\|^2).\label{estimate4142}
\end{align}
Finally, let us carefully deal with the term
$-\frac{\bar{v}}{k_1(\bar{v})}F_{13}\Psi$, which provides the energy for $\sigma$. First, with the help of
(\ref{model32})$_1$, we can rewrite it as
\begin{eqnarray}
\frac{\bar{v}}{k_1(\bar{v})}F_{13}\Psi&=&-\frac{\lambda^2\bar{v}}{k_1(\bar{v})}\big(\frac{1}{\bar{v}
+\Phi_y}(\frac{\bar{\phi}_y+\sigma_y}{\bar{v}+\Phi_y})_y
-\frac{1}{\bar{v}}(\frac{\bar{\phi}_y}{\bar{v}})_y\big)\Psi \notag\\
&=&-\frac{\lambda^2\bar{v}}{k_1(\bar{v})}\big(\big(\frac{\bar{\phi}_y+\sigma_y}{(\bar{v}+\Phi_y)^2}-\frac{\bar{\phi}_y}{\bar{v}^2}\big)_y
-\big(\frac{1}{\bar{v}+\Phi_y}\big)_y\frac{\bar{\phi}_y+\sigma_y}{\bar{v}+\Phi_y}+\big(\frac{1}{\bar{v}}\Big)_y\frac{\bar{\phi}_y}{\bar{v}}\big)\Psi \notag\\
&=&-\big(\frac{\lambda^2\bar{v}}{k_1(\bar{v})}\Psi\big(\frac{\bar{\phi}_y+\sigma_y}{(\bar{v}+\Phi_y)^2}-\frac{\bar{\phi}_y}{\bar{v}^2}\big)\big)_y
+\frac{\lambda^2\bar{v}}{k_1(\bar{v})}\big(\frac{\bar{\phi}_y+\sigma_y}{(\bar{v}+\Phi_y)^2}-\frac{\bar{\phi}_y}{\bar{v}^2}\big)\Psi_y \notag\\
&&+(\frac{\lambda^2\bar{v}}{k_1(\bar{v})})_y\Psi\big(\frac{\bar{\phi}_y+\sigma_y}{(\bar{v}+\Phi_y)^2}-\frac{\bar{\phi}_y}{\bar{v}^2}\big)
+\frac{\lambda^2\bar{v}}{k_1(\bar{v})}\Psi\big((\frac{1}{\bar{v}+\Phi_y})_y\frac{\bar{\phi}_y+\sigma_y}{\bar{v}+\Phi_y}
-(\frac{1}{\bar{v}})_y\frac{\bar{\phi}_y}{\bar{v}}\big) \notag\\
&=&-\lambda^2\big(\frac{\bar{v}}{k_1(\bar{v})}\Psi(\frac{\bar{\phi}_y+\sigma_y}{(\bar{v}+\Phi_y)^2}-\frac{\bar{\phi}_y}{\bar{v}^2})\big)_y
+\lambda^2\sigma_y\Psi_y+\lambda^2(\frac{1}{k_1(\bar{v})\bar{v}}-1)\sigma_y\Psi_y\notag\\
&&+\frac{\lambda^2\bar{v}}{k_1(\bar{v})}\sigma_y\Psi_y\big(\frac{1}{(\bar{v}+\Phi_y)^2}-\frac{1}{\bar{v}^2}\big)
+\frac{\lambda^2\bar{v}}{k_1(\bar{v})}\bar{\phi}_y\big(\frac{1}{(\bar{v}+\Phi_y)^2}-\frac{1}{\bar{v}^2}\big)\Psi_y\notag\\
&&
+(\frac{\lambda^2\bar{v}}{k_1(\bar{v})})_y\Psi\big(\frac{\sigma_y}{(\bar{v}+\Phi_y)^2}
+\bar{\phi}_y\big(\frac{1}{(\bar{v}+\Phi_y)^2}-\frac{1}{\bar{v}^2}\big)\big) \notag\\
&&-\frac{\lambda^2\bar{v}}{k_1(\bar{v})}\big(\frac{\bar{\phi}_y\Phi_{yy}+\bar{v}_y\sigma_y+\sigma_y\Phi_{yy}}{(\bar{v}+\Phi_y)^3}
+\bar{v}_y\bar{\phi}_y\big(\frac{1}{(\bar{v}+\Phi_y)^3}
-\frac{1}{\bar{v}^3}\big)\big)\Psi\notag\\
&=&B_{21y}+\lambda^2(\sigma_y\Phi)_t +\lambda^2\sigma_t\Phi_y
-\lambda^2s\sigma_y\Phi_y+G_{41},\label{equality413}
\end{eqnarray}
here
\begin{eqnarray*}
B_{21}&=&-\frac{\lambda^2\bar{v}}{k_1(\bar{v})}\Psi\big(\frac{\bar{\phi}_y+\sigma_y}{(\bar{v}+\Phi_y)^2}-\frac{\bar{\phi}_y}{\bar{v}^2}\big)
- \lambda^2\sigma_t\Phi,\\
G_{41}&=&\lambda^2\big(\frac{1}{k_1(\bar{v})\bar{v}}-1\big)\sigma_y\Psi_y
+\frac{\lambda^2\bar{v}}{k_1(\bar{v})}\sigma_y\Psi_y\big(\frac{1}{(\bar{v}+\Phi_y)^2}-\frac{1}{\bar{v}^2}\big)
+\frac{\lambda^2\bar{v}}{k_1(\bar{v})}\bar{\phi}_y\big(\frac{1}{(\bar{v}+\Phi_y)^2}-\frac{1}{\bar{v}^2}\big)\Psi_y \notag\\
&&+(\frac{\lambda^2\bar{v}}{k_1(\bar{v})})_y\Psi\big(\frac{\sigma_y}{(\bar{v}+\Phi_y)^2}
+\bar{\phi}_y\big(\frac{1}{(\bar{v}+\Phi_y)^2}-\frac{1}{\bar{v}^2}\big)\big) \notag\\
&&-\frac{\lambda^2\bar{v}}{k_1(\bar{v})}\big(\frac{\bar{\phi}_y\Phi_{yy}+\bar{v}_y\sigma_y+\sigma_y\Phi_{yy}}{(\bar{v}+\Phi_y)^3}
+\bar{v}_y\bar{\phi}_y\big(\frac{1}{(\bar{v}+\Phi_y)^3}
-\frac{1}{\bar{v}^3}\big)\big)\Psi\notag\\
&\sim&O(|\bar
v-v_\pm|\sigma_y\Psi_y+\sigma_y\Phi_y\Psi_y+\bar\phi_y\Phi_y\Psi_y+\bar
v_y\Psi\sigma_y+\bar
v_y\bar\phi_y\Psi\Phi_y+\bar\phi_y\Psi\Phi_{yy}+\bar
v_y\Psi\sigma_y+\Psi\sigma_y\Phi_{yy}).
\end{eqnarray*}

Moreover, using (\ref{model32})$_4$, we find that
\begin{eqnarray}
\lambda^2\sigma_t\Phi_y&=&\lambda^4e^{-\bar{\phi}}
\big(\frac{\bar{\phi}_y+\sigma_y}{\bar{v}+\Phi_y}-\frac{\bar{\phi}_y}{\bar{v}}\big)_y\sigma_t
-\lambda^2\bar v\sigma\sigma_t+\lambda^2e^{-\bar{\phi}}F_3\sigma_t \notag\\
&=&\big(\lambda^4e^{-\bar{\phi}}\big(\frac{\bar{\phi}_y+\sigma_y}{\bar{v}+\Phi_y}-\frac{\bar{\phi}_y}{\bar{v}}\big)
\sigma_t\big)_y-(\lambda^4e^{-\bar{\phi}})_y\big(\frac{\bar{\phi}_y+\sigma_y}{\bar{v}+\Phi_y}-\frac{\bar{\phi}_y}{\bar{v}}\big)
\sigma_t \notag\\
&&-\lambda^4e^{-\bar{\phi}}\big(\frac{\bar{\phi}_y+\sigma_y}{\bar{v}+\Phi_y}-\frac{\bar{\phi}_y}{\bar{v}}\big)
\sigma_{ty}-\lambda^2\bar v\sigma\sigma_t+\lambda^2e^{-\bar{\phi}}F_3\sigma_t \notag\\
&=&-\big(\frac{\lambda^2\bar
v}{2}\sigma^2+\frac{\lambda^4}{2\bar{v}}e^{-\bar{\phi}}\sigma^2_y\big)_t+B_{22y}+\lambda^2e^{-\bar{\phi}}F_3\sigma_t+G_{42},\label{equality414}
\end{eqnarray}
and
\begin{eqnarray}
-\lambda^2s\sigma_y\Phi_y&=&-\lambda^2s\sigma_y\big(\lambda^2e^{-\bar{\phi}}
\big(\frac{\bar{\phi}_y+\sigma_y}{\bar{v}+\Phi_y}
-\frac{\bar{\phi}_y}{\bar{v}}\big)_y-\bar{v}\sigma+e^{-\bar{\phi}}F_3\big)\notag\\
&=&-\big(\lambda^4s\sigma_ye^{-\bar{\phi}}\big(\frac{\bar{\phi}_y+\sigma_y}{\bar{v}+\Phi_y}
-\frac{\bar{\phi}_y}{\bar{v}}\big)\big)_y
+(\lambda^4se^{-\bar{\phi}})_y\big(\frac{\bar{\phi}_y+\sigma_y}{\bar{v}+\Phi_y}
-\frac{\bar{\phi}_y}{\bar{v}}\big)\sigma_y\notag\\
&&+\lambda^2s\bar
v\sigma\sigma_y-\lambda^2se^{-\bar{\phi}}\sigma_yF_3
+\lambda^4se^{-\bar{\phi}}\big(\frac{\bar{\phi}_y+\sigma_y}{\bar{v}+\Phi_y}
-\frac{\bar{\phi}_y}{\bar{v}}\big)\sigma_{yy} \notag\\
&=&-\frac{\lambda^2s}{2}\bar
v_y\sigma^2+B_{23y}-\lambda^2se^{-\bar{\phi}}\sigma_yF_3+G_{43},\label{equality415}
\end{eqnarray}
here
\begin{eqnarray*}
B_{22}&=&\lambda^4e^{-\bar{\phi}}\big(\frac{\bar{\phi}_y+\sigma_y}{\bar{v}+\Phi_y}-\frac{\bar{\phi}_y}{\bar{v}}\big)
\sigma_t,\\
G_{42}&=&-(\lambda^4e^{-\bar{\phi}})_y\big(\frac{\bar{\phi}_y+\sigma_y}{\bar{v}+\Phi_y}-\frac{\bar{\phi}_y}{\bar{v}}\big)
\sigma_t
-\lambda^4e^{-\bar{\phi}}\sigma_y\sigma_{ty}\big(\frac{1}{\bar{v}+\Phi_y}-\frac{1}{\bar{v}}\big)
-\lambda^4e^{-\bar{\phi}}\bar{\phi}_y\sigma_{ty}\big(\frac{1}{\bar{v}+\Phi_y}-\frac{1}{\bar{v}}\big)\\
&\sim&O(\bar\phi_y\sigma_y\sigma_t+\bar\phi_y^2\Phi_y\sigma_t+\sigma_y\Phi_y\sigma_{ty}+\bar\phi_y\Phi_y\sigma_{ty}),\\
B_{23}&=&
-\lambda^4s e^{-\bar{\phi}}\big(\frac{\bar{\phi}_y+\sigma_y}{\bar{v}+\Phi_y}-\frac{\bar{\phi}_y}{\bar{v}}\big)\sigma_y
+\frac{\lambda^2s\bar v}{2}\sigma^2+\frac{\lambda^4s}{2\bar{v}}e^{-\bar{\phi}}\sigma^2_y, \notag\\
G_{43}&=&(\lambda^4se^{-\bar{\phi}})_y\big(\frac{\bar{\phi}_y+\sigma_y}{\bar{v}+\Phi_y}-\frac{\bar{\phi}_y}{\bar{v}}\big)\sigma_y
+\lambda^4se^{-\bar{\phi}}\bar{\phi}_y\sigma_{yy}\big(\frac{1}{\bar{v}+\Phi_y}-\frac{1}{\bar{v}}\big)
+\lambda^4se^{-\bar{\phi}}\sigma_y\big(\frac{1}{\bar{v}+\Phi_y}-\frac{1}{\bar{v}}\big)\sigma_{yy} \notag\\
&&-(\frac{\lambda^4s}{2\bar{v}}e^{-\bar{\phi}})_y\sigma^2_y\sim
O(\bar\phi_y\sigma_y^2+\bar\phi_y^2\Phi_y\sigma_y+\sigma_y\Phi_y\sigma_{yy}+\bar\phi_y\Phi_y\sigma_{yy}).
\end{eqnarray*}
Further, same as (\ref{estimate413}) and (\ref{estimate4141}), one
gets
\begin{eqnarray}
&&\Big|\int_{\mathbb{R}}G_{41}\mathrm{d}y\Big|\le\frac{1}{8}\int_{\mathbb{R}}\bar{v}_y\Psi^2\mathrm{d}y
+C(\delta_2+\varepsilon_2)(\|\sigma_y\|^2+\|\Phi_y\|_1^2+\|\Psi_y\|^2),\label{estimate4143-1}\\
&&\Big|\int_{\mathbb{R}}(G_{42}+G_{43})\mathrm{d}y\Big|
\le
C(\delta_2+\varepsilon_2)(\|\sigma_y\|^2+\|\sigma_t\|^2+\|\Phi_y\|^2+\|\sigma_{ty}\|^2),\label{estimate4143-2}
\end{eqnarray}
and
\begin{eqnarray}
\Big|\lambda^2\int_{\mathbb{R}}e^{-\bar{\phi}}(F_3\sigma_t+s\sigma_yF_3)\mathrm{d}y\Big|\le
C\varepsilon_2(\|\Phi_y\|^2+\|\sigma\|_1^2+\|\sigma_t^2).\label{estimate4143-3}
\end{eqnarray}
Hence, we obtain
\begin{eqnarray}
-\int_{\mathbb{R}}\frac{\bar{v}}{k_1(\bar{v})}F_{13}\Psi
\mathrm{d}y
&\leq& \frac{\mathrm{d}}{\mathrm{d}t}\int_{\mathbb{R}}\big(\frac{\lambda^2\bar
v}{2}\sigma^2+\frac{\lambda^4}{2\bar{v}}e^{-\bar{\phi}}\sigma^2_y\big)\mathrm{d}y + \frac{\lambda^2s}{2}\int_{\mathbb{R}}\bar
v_y\sigma^2\mathrm{d}y+\frac{1}{8}\int_{\mathbb{R}}\bar{v}_y\Psi^2\mathrm{d}y\notag\\
&&+C(\delta_2+\varepsilon_2)(\|\sigma\|_2^2+\|\Phi_y\|_1^2
+\|\Psi_y\|^2+\|\sigma_t\|_1^2).\label{estimate4143}
\end{eqnarray}

Finally, let us deal with the terms
$\frac{R}{k_1(\bar{v})(k_1(\bar{v})-e^{\phi_\pm})}F_{21}W$ and
$\frac{R}{k_1(\bar{v})(k_1(\bar{v})-e^{\phi_\pm})}F_{22}W$. First,
using (\ref{priori41}) and (\ref{prori42}), it is easy to see
\begin{eqnarray*}
F_{21}&\sim&
O(1)(\Psi_y\Psi_{yy}+\Phi_y\Psi_y+(\bar\phi_{yy}+\bar\phi_{y}^2+\bar\phi_{y}\bar\phi_{y})\Phi_y\Psi_y+\Psi_yw+\Phi_yw_y+\bar
\theta_y\Phi_y^2+(\bar\phi_y+\sigma_y)\sigma_t\\
&&+\bar \phi_y\sigma_y+\sigma_y^2+\bar \phi_y^2(\Phi_y+\Psi_y)+(\bar
\phi_y+\sigma_y)\Psi_y\sigma_y+(\bar\phi_{yy}+\sigma_{yy})\Psi_y+(\bar\phi_{y}+\sigma_{y})\bar
v_{y}\Psi_y\\
&&+(\bar\phi_{y}+\sigma_{y})\Phi_{yy}\Psi_y),
\end{eqnarray*}
then same as (\ref{estimate413}) and (\ref{estimate4141}), one gets
\begin{eqnarray}
\Big|\int_{\mathbb{R}}\frac{R}{k_1(\bar{v})(k_1(\bar{v})-e^{\phi_\pm})}F_{21}W\mathrm{d}y \Big|
&\le&\frac{1}{8}\int_{\mathbb{R}}\bar{v}_yW^2\mathrm{d}y+
C(\delta_2+\varepsilon_2)(\|\Phi_y\|_1^2+\|\Psi_y\|_1^2+\|w\|^2+\|w_y\|^2\notag\\
&&+\|\sigma_y\|_1^2
+\|W_{y}\|^2+\|\sigma_t\|^2).\label{estimate4144}
\end{eqnarray}
Next,  we can rewrite the term
$\frac{R}{k_1(\bar{v})(k_1(\bar{v})-e^{\phi_\pm})}F_{22}W$ as
\begin{eqnarray*}
&&\frac{R}{k_1(\bar{v})(k_1(\bar{v})-e^{\phi_\pm})}F_{22}W\notag\\
&=&-\Big(\frac{\kappa(\gamma-1)}{\bar
vk_1(\bar{v})(k_1(\bar{v})-e^{\phi_\pm})}
\big(\sigma-(\bar{v}+\Phi_y)e^{\bar{\phi}+\sigma}+\bar{v}e^{\bar{\phi}}+e^{\phi_\pm}\Phi_y
-\frac{\lambda^2}{2}(\frac{(\bar{\phi}_y+\sigma_y)^2}{\bar{v}+\Phi_y}-\frac{\bar{\phi}^2_y}{\bar{v}}\big)\Big)W\big)_y+G_5,
\end{eqnarray*}
here
\begin{eqnarray*}
G_5&=&\big(\frac{\kappa(\gamma-1)}{\bar
vk_1(\bar{v})(k_1(\bar{v})-e^{\phi_\pm})}\big)_y
\big(\sigma-(\bar{v}+\Phi_y)e^{\bar{\phi}+\sigma}+\bar{v}e^{\bar{\phi}}+e^{\phi_\pm}\Phi_y
-\frac{\lambda^2}{2}(\frac{(\bar{\phi}_y+\sigma_y)^2}{\bar{v}+\Phi_y}-\frac{\bar{\phi}^2_y}{\bar{v}})\big)W\notag\\
&&+\frac{\kappa(\gamma-1)}{\bar
vk_1(\bar{v})(k_1(\bar{v})-e^{\phi_\pm})}
\big(\sigma-(\bar{v}+\Phi_y)e^{\bar{\phi}+\sigma}+\bar{v}e^{\bar{\phi}}+e^{\phi_\pm}\Phi_y
-\frac{\lambda^2}{2}(\frac{(\bar{\phi}_y+\sigma_y)^2}{\bar{v}+\Phi_y}-\frac{\bar{\phi}^2_y}{\bar{v}})\big)W_y\\
&\sim& O(1)(\bar v_y\bar\phi_{y}\sigma
W+\bar\phi_yW_y\sigma+\bar\phi_yW\sigma_y+\sigma^2W+\bar\phi_yy\Phi_yW+\Phi_y\sigma
W+\sigma^2_yW+\bar\phi_y\Phi_yW+\sigma^2W_y\\
&&+\bar\phi_y^2\Phi_yW+\bar\phi_{yy}\sigma W_y+\bar
v_y\bar\phi_{y}\sigma W_y+\bar\phi_{y}y\Phi_yW_y+\Phi_y\sigma
W_y+\sigma_y^2W_y+\bar\phi_{y}\sigma_yW_y+\bar\phi_{y}\Phi_yW_y) .
\end{eqnarray*}
Then similar to (\ref{estimate4142}), we can obtain
\begin{eqnarray*}
\Big|\int_{\mathbb{R}}G_{5}\mathrm{d}y \Big|\le\frac{1}{8}\int_{\mathbb{R}}(\bar{v}_yW^2+W_y^2)\mathrm{d}y
+C(\delta_2+\varepsilon_2)(\|\sigma\|_1^2+\|\Phi_y\|^2+\|W_{y}\|^2).\label{estimate4152}
\end{eqnarray*}
That is,
\begin{eqnarray}
\Big|\int_{\mathbb{R}}\frac{R}{k_1(\bar{v})(k_1(\bar{v})-e^{\phi_\pm})}F_{22}W\mathrm{d}y\Big|
&\le&\frac{1}{8}\int_{\mathbb{R}}(\bar{v}_yW^2+W_y^2)\mathrm{d}y\notag\\
&&+C(\delta_2+\varepsilon_2)(\|\sigma\|_1^2+\|\Phi_y\|^2+\|W_{y}\|^2).\label{estimate4145}
\end{eqnarray}
Moreover, recalling (\ref{relation36}), it is trivial to get
\begin{eqnarray}
\|w\|^2\leq C(\|\bar
v_y^{\frac12}\Psi\|^2+\|\sigma\|^2+\|\sigma_y\|^2+\|\Phi_y\|^2+\|W_y\|^2).\label{estimate4146}
\end{eqnarray}

Therefore, putting (\ref{estimate412}), (\ref{estimate413}),
(\ref{estimate4141}), (\ref{estimate4142}),
(\ref{estimate4143})-(\ref{estimate4145}) and (\ref{estimate4146})
into (\ref{equality41}), then integrating the resultant inequality
with respect to $t$ and using Lemma 4.1, we immediately have
(\ref{estimate41}). This completes the proof.

\

Next we estimate the $L^2([0,T];L^2)$-norm of $\Phi_y(=\varphi)$.
\begin{lema}
Under the assumptions of Proposition 3.2, there is a positive
constant $C$ such that it holds for any $t\in[0,T]$,
\begin{eqnarray}
\|\Phi_y(t)\|^2+\int_{0}^{t}\|(\Phi_y,\sigma_y,\sigma_{yy})(\tau)\|^2\mathrm{d}\tau&\le&
C(\|\Phi_0\|_1^2+\|\Psi_0\|^2+\|W_0\|^2)\notag\\
&&+C(\delta_2+\varepsilon_2)\int_{0}^{t}
\|(\varphi_y,\psi_y,w_y)(\tau)\|^2\mathrm{d}\tau.\label{estimate42}
\end{eqnarray}
\end{lema}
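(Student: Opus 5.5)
The plan is the classical Kawashima--Matsumura device for recovering dissipation of the hyperbolic variable. Since $\Psi_t=\psi$ up to transport, differentiating (\ref{model32})$_1$ in $y$ gives $\Phi_{yt}-s\Phi_{yy}=\Psi_{yy}$. We then eliminate $\Psi_{yy}$ from the momentum equation (\ref{model32})$_2$, written in the original nonlinear form (\ref{model31})$_2$ so that the viscous term appears as $\mu(\Psi_y)_y/(\bar v+\Phi_y)$, i.e. essentially $\mu(\ln(\bar v+\Phi_y))_t$ minus a transport term. Concretely, we multiply (\ref{model31})$_2$ by $-\Phi_y$ (equivalently, write $\frac{\mu}{\bar v}\Psi_{yy}\Phi_y=\frac{\mu}{\bar v}(\Phi_{yt}-s\Phi_{yy})\Phi_y$) and integrate over $\mathbb{R}$. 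This produces $\frac{d}{dt}\int\frac{\mu}{2\bar v}\Phi_y^2\,dy$ plus a transport boundary term and a harmless $O(\bar v_y)\Phi_y^2$ term. The term $\Psi_t\Phi_y$ is handled by writing $-\int\Psi_t\Phi_y=-\frac{d}{dt}\int\Psi\Phi_y+\int\Psi_y\Phi_{yt}$, and then using $\Phi_{yt}=s\Phi_{yy}+\Psi_{yy}$, so that it is reduced to $-\|\Psi_y\|^2$ after integration by parts. The mixed term $\frac{d}{dt}\int\Psi\Phi_y$ is controlled by $\|\Psi\|^2$, which is already bounded by Lemma 4.2.

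The key good sign comes from the pressure plus electrostatic part. In the linearized equation, $-\frac{1}{\bar v}k_1(\bar v)\Phi_y\cdot(-\Phi_y)$ yields $\int\frac{k_1(\bar v)}{\bar v}\Phi_y^2\,dy\ge c\|\Phi_y\|^2$ by (\ref{prori42}). The $k_2$ term is $O(\delta_2)\|\Phi_y\|^2$ by (\ref{inequality12}). The temperature coupling $\frac{R}{\bar v}W_y\Phi_y$ is bounded by $\frac{c}{4}\|\Phi_y\|^2+C\|W_y\|^2$, and the time integral of $\|W_y\|^2$ is controlled by Lemma 4.2. Similarly $\bar u_y\Psi\Phi_y$ is bounded by $C\delta_2(\|\bar v_y^{1/2}\Psi\|^2+\|\Phi_y\|^2)$. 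For $F_{11}$ and $F_{12}$ we use smallness of $\varepsilon_2+\delta_2$, Sobolev's inequality and Lemma 4.1 ($\|\sigma\|_1\le C\|\varphi\|$), together with (\ref{estimate4146}) for $w$.

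The main obstacle is $F_{13}\Phi_y$, which contains $\sigma_{yy}$. Its linear part is $-\frac{\lambda^2}{\bar v^2}\sigma_{yy}\Phi_y$, and I plan to use the linearized Poisson equation (\ref{model32})$_4$, $-\frac{\lambda^2}{\bar v}\sigma_{yy}=-\bar ve^{\bar\phi}\sigma-e^{\bar\phi}\Phi_y+(\text{lower order}+F_3)$, to substitute for $\Phi_y$. This gives $\int\Phi_y\cdot\frac{-\lambda^2\sigma_{yy}}{\bar v^2}\,dy$ expressed through $\int(\bar v e^{\bar\phi}\sigma+\frac{\lambda^2}{\bar v}\sigma_{yy})\frac{\lambda^2}{\bar v^2}e^{-\bar\phi}\sigma_{yy}$, which after integration by parts in the $\sigma\sigma_{yy}$ piece is of definite sign: it produces $c(\|\sigma_y\|^2+\|\sigma_{yy}\|^2)$ on the dissipative side, up to errors $C\delta_2\|\sigma\|_1^2$ coming from derivatives of $\bar v,\bar\phi$. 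This explains why $\sigma_y$ and $\sigma_{yy}$ appear on the left of (\ref{estimate42}). The nonlinear remainders involving $\Phi_{yy}\sigma_y$ and $F_3$ are cubic and are absorbed using $\varepsilon_2$ and Lemma 4.1.

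Finally, we integrate in time and add a suitable multiple of (\ref{estimate41}) to absorb $\int_0^t\|(\Psi_y,W_y)\|^2$ and $\|\Psi\|^2$. Lemma 4.1 then bounds $\|\sigma_{yy}\|^2$ in terms of $\|\varphi\|^2$ if needed. The $\delta_2+\varepsilon_2$ multiples of $\|(\Phi_y,\Phi_{yy},\Psi_{yy},w_y)\|^2$ coming from (\ref{estimate41}) are either absorbed into the left side ($\Phi_y$) or kept as $\|(\varphi_y,\psi_y,w_y)\|^2$ on the right, which yields (\ref{estimate42}).
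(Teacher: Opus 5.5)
Your overall scheme matches the paper's. You test the momentum equation with $-\Phi_y$, turn $\frac{\mu}{\bar v}\Psi_{yy}\Phi_y$ into $\frac{d}{dt}\int\frac{\mu}{2\bar v}\Phi_y^2$ via $\Phi_{yt}-s\Phi_{yy}=\Psi_{yy}$, and pay for $\|\Psi\|^2$ and $\int_0^t\|(\Psi_y,W_y)\|^2$ with the basic estimate \eqref{estimate41}. However, the step you call the main obstacle goes the wrong way, and this electrostatic coupling is the one genuinely new point of the lemma.

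On the dissipative side the term is $-\frac{\lambda^2}{\bar v^2}\sigma_{yy}\Phi_y$. The linearized Poisson equation \eqref{model32}$_4$ gives $\Phi_y=\frac{\lambda^2}{\bar v}e^{-\bar\phi}\sigma_{yy}-\bar v\sigma+\cdots$. Your bracket has the wrong relative sign, and even as written it is not of one sign once $\sigma\sigma_{yy}$ is integrated by parts. Substituting correctly gives
\[
-\int_{\mathbb{R}}\frac{\lambda^2}{\bar v^2}\sigma_{yy}\Phi_y\,\mathrm{d}y=-\int_{\mathbb{R}}\Big(\frac{\lambda^2}{\bar v}\sigma_y^2+\frac{\lambda^4e^{-\bar\phi}}{\bar v^3}\sigma_{yy}^2\Big)\mathrm{d}y+\cdots,
\]
which is \emph{anti}-dissipative. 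The paper's identity \eqref{equality422} records the same fact: $\int\frac{\lambda^2}{\bar v}\Phi_y\sigma_{yy}\approx\int(\frac{\lambda^4}{\bar v}\sigma_{yy}^2+\lambda^2\sigma_y^2)\ge 0$.

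With frozen coefficients $\bar v=1$, $\bar\phi=0$, the full quadratic part is exactly $\int\big(R\theta+\frac{1}{1+\lambda^2\xi^2}\big)|\hat\varphi|^2\,\mathrm{d}\xi$. So the electrostatic ``$+1$'' inside $k_1$ is cancelled at high frequencies. The coupling consumes part of $\frac{k_1}{\bar v}\|\Phi_y\|^2$ rather than adding $c(\|\sigma_y\|^2+\|\sigma_{yy}\|^2)$. Hence the lower bound $k_1\ge\frac{R\theta_\pm+1}{v_\pm}>0$ you cite does not suffice. Lemma 4.1 cannot repair this either: absorbing the negative term into $\frac{k_1}{\bar v}\|\Phi_y\|^2$ needs a sharp constant, not a generic $C$.

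The missing idea is that coercivity must come from the thermal part of the pressure. There are two ways to get it.

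\begin{itemize}
\item Substitute for $\sigma_{yy}$ rather than for $\Phi_y$. Then $-\frac{\lambda^2}{\bar v^2}\sigma_{yy}\Phi_y=-\frac{e^{\bar\phi}}{\bar v}\Phi_y^2-e^{\bar\phi}\sigma\Phi_y+\cdots$. The coefficient of $\Phi_y^2$ becomes $\frac{k_1-e^{\bar\phi}}{\bar v}\ge c$, by the second inequality in \eqref{prori42} together with $|e^{\bar\phi}-e^{\phi_\pm}|\le C\delta_2$. The term $-e^{\bar\phi}\sigma\Phi_y$, after inserting Poisson once more and integrating by parts, yields the good terms $\frac{\lambda^2}{\bar v}\sigma_y^2+\bar v e^{\bar\phi}\sigma^2$.
\item As the paper does, add the Poisson equation tested with $-\frac{\lambda^2}{\bar v}e^{-\bar\phi}\sigma_{yy}$. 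Then use that $k_1\Phi_y^2-\frac{2\lambda^2}{\bar v}\Phi_y\sigma_{yy}+\frac{\lambda^4}{\bar v}\sigma_{yy}^2$ is positive definite. This holds precisely because $k_1\bar v=R\theta_\pm+1+O(\delta_2)>1$.
\end{itemize}

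With this correction, the rest of your plan goes through as in the paper: the bounds on $F_{11}$ and $F_{12}$, the cubic remainders, and the final combination with \eqref{estimate41}.
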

\noindent {\bf Proof.} 
Multiplying the equation $\Phi$ and $\Psi$ in
(\ref{model32}) by $\bar{v}\Psi_y-\bar{v}_y\Psi$ and
$-\bar{v}\Phi_y$, respectively. 
Differentiating $(3.13)_1$ with respect to $y$ and multiplying the
resultant by $\mu\Phi_y$, then adding the three identities, gives 
\begin{eqnarray}
&&(\frac{\mu}{2}\Phi^2_y-\bar{v}\Phi_y\Psi)_t+(k_1+k_2)\Phi_y^2+(\bar{v}\Psi\Phi_t-\frac{\mu
s}{2}\Phi^2_y)_y - \bar{v}_y(s\Phi_y+\Psi_y)\Psi-\bar{v}\Psi^2_y \notag\\
&=&\Phi_y(RW_y+(\gamma-1)\bar{u}_y\Psi)-\bar{v}\Phi_y(F_{11}+F_{12})+\frac{\lambda^2}{\bar{v}}\Phi_y\sigma_{yy}+G_6,\label{equality421}
\end{eqnarray}
here we used the fact
\begin{eqnarray*}
-\bar{v}\Phi_yF_{13}&=&\lambda^2\bar{v}\Phi_y\big(\frac{1}{\bar{v}+\Phi_y}(\frac{\bar{\phi}_y+\sigma_y}{\bar{v}+\Phi_y})_y
-\frac{1}{\bar{v}}(\frac{\bar{\phi}_y}{\bar{v}})_y\big) \notag\\
&=&\lambda^2\bar{v}\Phi_y\big(\frac{1}{\bar{v}+\Phi_y}(\frac{\bar{\phi}_y+\sigma_y}{\bar{v}+\Phi_y}-\frac{\bar{\phi}_y}{\bar{v}})_y
+(\frac{\bar{\phi}_y}{\bar{v}})_y(\frac{1}{\bar{v}+\Phi_y}-\frac{1}{\bar{v}})\big) \notag\\
&=&\frac{\lambda^2\bar{v}\Phi_y}{(\bar{v}+\Phi_y)^2}\sigma_{yy}+\frac{\lambda^2\bar{v}\Phi_y}{\bar{v}+\Phi_y}\bar{\phi}_{yy}
\big(\frac{1}{\bar{v}+\Phi_y}-\frac{1}{\bar{v}}\big)+\lambda^2\bar{v}\Phi_y(\frac{\bar{\phi}_y}{\bar{v}})_y
\big(\frac{1}{\bar{v}+\Phi_y}-\frac{1}{\bar{v}}\big) \notag\\
&&-\frac{\lambda^2\bar{v}\Phi_y}{\bar{v}+\Phi_y}\big(\frac{\bar{\phi}_y\Phi_{yy}+\bar{v}_y\sigma_y+\Phi_{yy}\sigma_y}{(\bar{v}+\Phi_y)^2}
+\bar{\phi}_y\bar{v}_y(\frac{1}{(\bar{v}+\Phi_y)^2}-\frac{1}{\bar{v}^2})\big) \notag\\
&=&\frac{\lambda^2}{\bar{v}}\sigma_{yy}\Phi_{y}+G_6
\end{eqnarray*}
with
\begin{eqnarray*}
G_6&=&\lambda^2\bar{v}\Phi_y\sigma_{yy}\big(\frac{1}{(\bar{v}+\Phi_y)^2}-\frac{1}{\bar{v}^2}\big)
+\frac{\lambda^2\bar{v}\Phi_y}{\bar{v}+\Phi_y}
\bar{\phi}_{yy}\big(\frac{1}{\bar{v}+\Phi_y}-\frac{1}{\bar{v}}\big)+\lambda^2\bar{v}\Phi_y(\frac{\bar{\phi}_y}{\bar{v}})_y
\big(\frac{1}{\bar{v}+\Phi_y}-\frac{1}{\bar{v}}\big) \notag\\
&&-\frac{\lambda^2\bar{v}\Phi_y}{(\bar{v}+\Phi_y)^3}(\bar{\phi}_y\Phi_{yy}+\bar{v}_y\sigma_y+\Phi_{yy}\sigma_y)
-\frac{\lambda^2\bar{v}}{\bar{v}+\Phi_y}\bar{\phi}_y\bar{v}_y\Phi_y\big(\frac{1}{(\bar{v}+\Phi_y)^2}-\frac{1}{\bar{v}^2}\big)\\
&\sim&O(\Phi_y^2\sigma_{yy}+\bar\phi_{yy}\Phi_y^2+\bar
v_y\bar\phi_y\Phi_y^2+\bar\phi_y\Phi_y\Phi_{yy}+\bar
v_y\Phi_y\sigma_y+ \bar
v_y\bar\phi_y\Phi_y^2+\sigma_y\Phi_y\Phi_{yy}).
\end{eqnarray*}
Moreover, multiplying (\ref{model32})$_4$ by
$-\frac{\lambda^2}{\bar{v}}e^{-\bar\phi}\sigma_{yy}$, one has
\begin{eqnarray}
\frac{\lambda^4}{\bar{v}}\sigma^2_{yy}+\lambda^2\sigma^2_y-\frac{\lambda^2}{\bar{v}}\Phi_y\sigma_{yy}-(\lambda^2\sigma\sigma_y)_y
=G_7,\label{equality422}
\end{eqnarray}
here
\begin{eqnarray*}
G_7&=&-\frac{\lambda^4}{\bar{v}}e^{-\bar{\phi}}\bar{\phi}_{yy}\sigma_{yy}\big(\frac{1}{\bar{v}+\Phi_y}-\frac{1}{\bar{v}}\big)
+\frac{\lambda^4}{\bar{v}}e^{-\bar{\phi}}\big(\frac{(\bar{\phi}_y+\sigma_y)(\bar{v}_y+\Phi_{yy})}{(\bar{v}+\Phi_y)^2}
-\frac{\bar{\phi}_y\bar{v}_y}{\bar{v}^2}\big)\sigma_{yy}-\frac{\lambda^2}{\bar{v}}\sigma_{yy}F_3\notag\\
&&-\frac{\lambda^4}{\bar{v}}e^{-\bar\phi}\big(\frac{1}{\bar{v}+\Phi_y}-\frac{1}{\bar{v}}\big)\sigma_{yy}^2
-\frac{\lambda^4}{\bar{v}}e^{-\bar\phi}(\frac{1}{\bar{v}}-e^{\bar{\phi}})\sigma_{yy}^2\\
&\sim&O(1)(\bar\phi_{yy}\Phi_y\sigma_{yy}+\bar
v_y\bar\phi_y\Phi_y\sigma_{yy}+\bar\phi_y\Phi_{yy}\sigma_{yy}+\bar
v_y\sigma_{y}\sigma_{yy}+\sigma_y\Phi_{yy}\sigma_{yy}+\Phi_y\sigma_{yy}^2+\bar\phi_{yy}\sigma_{yy}^2\\
&&+\bar
v_y\bar\phi_y\sigma_{yy}^2+\sigma\sigma_{yy}^2+\sigma\Phi_y\sigma_{yy}^2).
\end{eqnarray*}
Then integrating the summation of (\ref{equality421}) and
(\ref{equality422}) over $[0,t]\times\mathbb{R}$ and using Cauchy
inequality and the profile bounds $|\bar{v}_y|+|k_2|\leq C\delta_2$, it yields that
\begin{eqnarray*}
(k_1+k_2)\Phi_y^2+\frac{\lambda^4}{\bar{v}}\sigma^2_{yy}-\frac{2\lambda^2}{\bar{v}}\Phi_y\sigma_{yy}\geq
c(\Phi_y^2+\sigma^2_{yy}),
\end{eqnarray*}
and hence, we arrive at
\begin{eqnarray}
&&\|\Phi_y\|^2+\int_0^t(\|\Phi_y\|^2+\|\sigma_y\|^2+\|\sigma_{yy}\|^2)\mathrm{d}\tau\notag\\
&\leq &C(\|\Phi_{0y}\|^2+\|\Psi_0\|^2+\|\Psi\|^2)
+C\delta_2\int_0^t\|\bar v^{\frac12}\Psi\|^2\mathrm{d}\tau
+C\int_0^t(\|\Psi_y\|^2+\|W_y\|^2)\mathrm{d}\tau\notag\\
&&+C|\int_0^t\int_{\mathbb{R}}(\Phi_y(F_{11}+F_{12})+G_6+G_7)\mathrm{d}y\mathrm{d}\tau|.\label{equality42}
\end{eqnarray}
First, from (\ref{inequality11}), (\ref{inequality12}),
(\ref{assumption31}), the Sobolev inequality and the Young
inequality, one gets
\begin{eqnarray}
|\int_0^t\int_{\mathbb{R}}\Phi_yF_{11}\mathrm{d}y\mathrm{d}\tau|
&\leq&C|\int_0^t\int_{\mathbb{R}}\Phi_y(\Psi^2_y+\Phi_y\Psi_{yy}+\Phi^2_y+\Phi_yw)\mathrm{d}y\mathrm{d}\tau
|\notag\\
&&+C|\int_0^t\int_{\mathbb{R}}\Phi_y(\sigma^2_y+\bar\phi_y\sigma_{y}+\bar\phi_y\Phi_y)\mathrm{d}y\mathrm{d}\tau |\notag\\
&\leq&C\|\Phi_y\|_{L^\infty}\int_0^t\int_{\mathbb{R}}(\Psi^2_y+\Psi_{yy}^2+\Phi^2_y+w^2)\mathrm{d}y\mathrm{d}\tau\notag\\
&&+C\|\sigma_y\|_{L^\infty}\int_0^t\int_{\mathbb{R}}(\Phi^2_y+\sigma^2_y)\mathrm{d}y\mathrm{d}\tau
+C\|\bar\phi_y\|_{L^\infty}\int_0^t\int_{\mathbb{R}}(\Phi^2_y+\sigma^2_y)\mathrm{d}y\mathrm{d}\tau\notag\\
&\le&C(\delta_2+\varepsilon_2)\int_0^t(\|\Phi_y\|^2+\|\sigma_y\|^2+\|\Psi_y\|_1^2+\|w\|^2)\mathrm{d}\tau.\label{estimate421}
\end{eqnarray}
Recalling $F_{12}\sim O(\bar
v_y\bar\phi_y\sigma+\bar\phi_{yy}\sigma+\sigma^2+\bar
v_y\Phi_y+\Phi_y\sigma)$, then similar to (\ref{estimate421}), we
obtain
\begin{eqnarray}
|\int_0^t\int_{\mathbb{R}}\Phi_yF_{12}\mathrm{d}y\mathrm{d}\tau|\le
C(\delta_2+\varepsilon_2)\int_0^t(\|\sigma\|^2+\|\Phi_y\|^2)\mathrm{d}\tau.\label{estimate422}
\end{eqnarray}
Similarly, we can show that
\begin{eqnarray}
|\int_0^t\int_{\mathbb{R}}G_6\mathrm{d}y\mathrm{d}\tau|\le
C(\delta_2+\varepsilon_2)\int_0^t(\|\Phi_y\|_1^2+\|\sigma_y\|_1^2)\mathrm{d}\tau,\label{estimate423}
\end{eqnarray}
and
\begin{eqnarray}
|\int_0^t\int_{\mathbb{R}}G_7\mathrm{d}y\mathrm{d}\tau|\le
C(\delta_2+\varepsilon_2)\int_0^t(\|\Phi_y\|_1^2+\|\sigma_y\|_1^2)\mathrm{d}\tau.\label{estimate424}
\end{eqnarray}

Hence, insertion (\ref{estimate421})-(\ref{estimate424}) into
(\ref{equality42}) and using (\ref{estimate4146}) and Lemma 4.3 lead
to (\ref{estimate42}). This completes the proof.

\

In the following, we proceed to estimate the $H^1$-norm of the
quantities $(\varphi,\psi,w)$. We first rewrite (\ref{model30}) in
the form:
\begin{eqnarray}
\left\{
\begin{array}{lcr}
\varphi_t-s\varphi_y-\psi_y=0, \\
\psi_t-s\psi_y-\frac{1}{\bar{v}}k_3(\bar
v)\varphi_y+\frac{R}{\bar{v}}w_y=\big(\frac{\mu}{\bar{v}}\psi_y\big)_y
+\big(\frac{1}{\bar{v}^2}(R\bar{\theta}-\mu\bar{u}_y)\big)_y\varphi+\frac{1}{\bar{v}}k_2(\bar{v},
\bar{\phi})\varphi_y-(\frac{R}{\bar{v}})_yw \\
~~~~~~~~~~~~~~~~~~~~~~~~~~~~~~~~~~~~~~~~-\frac{\sigma_y}{\bar{v}+\varphi}
-\bar{\phi}_y\big(\frac{1}{\bar{v}+\varphi}-\frac{1}{\bar{v}}\big)+f_1, \\
\frac{R}{\gamma-1}(w_t-sw_y)+k_3(\bar{v})\psi_y=\big(\frac{\kappa}{\bar{v}}w_y\big)_y-\big(\frac{\kappa}{\bar{v}^2}\bar{\theta}_y\varphi\big)_y
+\frac{1}{\bar{v}}\big(\frac{R\bar{\theta}-\mu\bar{u}_y}{\bar{v}}\varphi-Rw+\mu\psi_y\big)\bar{u}_y\\
~~~~~~~~~~~~~~~~~~~~~~~~~~~~~~~~~~~~-k_2(\bar{v},
\bar{\phi})\psi_y+f_2, \\
-\lambda^2\big(\frac{\bar{\phi}_y+\sigma_y}{\bar{v}+\varphi}-\frac{\bar{\phi}_y}{\bar{v}}\big)_y
=-\bar{v}e^{\bar{\phi}}\sigma-\varphi
e^{\bar{\phi}}+f_3,\label{model41}
\end{array}\right.
\end{eqnarray}
here
\begin{eqnarray*}
k_3(\bar
v)&=&\frac{R\theta_\pm}{v_\pm}-s^2(\bar{v}-v_\pm)+\frac{1}{v_\pm\bar{v}}(\bar{v}-v_\pm)>0,\\%~k_4(\bar{v},\bar{\phi})=\frac{\lambda^2}{2}
%(\frac{\bar{\phi}_y}{\bar{v}})^2-\frac{\lambda^2}{\bar{v}}(\frac{\bar{\phi}_y}{\bar{v}})_y,\\
f_1&=&-\big(\frac{\varphi}{\bar{v}(\bar{v}+\varphi)}((k_3(\bar{v})+k_2(\bar{v},\bar{\phi}))\varphi-Rw+\mu\psi_y)\big)_y, \\
f_2&=&\frac{1}{\bar{v}+\varphi}((k_3(\bar{v})+k_2(\bar{v},\bar{\phi}))\varphi-Rw+\mu\psi_y)\big(\psi_y-\frac{1}{\bar{v}}\bar{u}_y\varphi\big)
-\big(\frac{\kappa\varphi}{\bar{v}(\bar{v}+\varphi)}(w_y-\frac{1}{\bar{v}}\bar{\theta}_y\varphi)\big)_y,\\
f_3&=&\bar{v}e^{\bar{\phi}}(1+\sigma-e^\sigma)+\varphi
e^{\bar{\phi}}(1-e^\sigma).
\end{eqnarray*}
\begin{lema}
Under the assumptions of Proposition 3.2, there is a positive
constant $C$ such that it holds for any $t\in[0,T]$,
\begin{eqnarray}
\|(\varphi,\psi,w)(t)\|^2+\int_{0}^{t}\|(\psi_y,w_y)(\tau)\|^2\mathrm{d}\tau&\le&
C\|(\varphi_0,\psi_0,w_0)\|^2+C\int_0^t\|(\psi,\sigma_y)(\tau)\|^2\mathrm{d}\tau\notag\\
&&+C(\delta_2+\varepsilon_2)\int_0^t\|(\varphi,w,\varphi_y)(\tau)\|^2\mathrm{d}\tau.\label{estimate43}
\end{eqnarray}
\end{lema}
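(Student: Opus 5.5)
We would prove (\ref{estimate43}) by a weighted $L^2$ energy estimate on the non-integrated system (\ref{model41}), mirroring the basic estimate of Lemma 4.3 one level of derivatives higher. The weights are chosen so that the principal hyperbolic couplings cancel. We multiply (\ref{model41})$_1$ by $k_3(\bar v)\varphi$, (\ref{model41})$_2$ by $\bar v\psi$, and (\ref{model41})$_3$ by $\frac{R}{k_3(\bar v)}w$ (equivalently, rescale $w$ so that its coupling matches), and add the three identities. The terms $-k_3\varphi\psi_y-k_3\varphi_y\psi$ then combine into $-(k_3\varphi\psi)_y+k_{3y}\varphi\psi$. Likewise $R\psi w_y+R\psi_y w=(R\psi w)_y$. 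Since $k_3(\bar v)>0$ is bounded above and below by (\ref{priori41}), this yields
\[
\frac{d}{dt}\int_{\mathbb R}\Big(\frac{k_3}{2}\varphi^2+\frac{\bar v}{2}\psi^2+\frac{R^2}{2(\gamma-1)k_3}w^2\Big)dy+\int_{\mathbb R}\Big(\mu\psi_y^2+\frac{\kappa R}{\bar v k_3}w_y^2\Big)dy\le \text{(remainders)}.
\]
Here the viscous and heat-conductive dissipation terms come from integrating $(\frac{\mu}{\bar v}\psi_y)_y\bar v\psi$ and $(\frac{\kappa}{\bar v}w_y)_y\frac{R}{k_3}w$ by parts. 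Integrating in $t$ then gives the left-hand side of (\ref{estimate43}).

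Next we sort the remainders. Terms with a profile-derivative coefficient ($k_{3y}\varphi\psi$, $s(k_3)_y\varphi^2$, $(\bar v)_y\psi\psi_y$, $(\frac{R\bar\theta-\mu\bar u_y}{\bar v^2})_y\varphi\psi$, $(\frac R{\bar v})_y w\psi$, the $\bar\theta_y\varphi$ and $\bar u_y$ terms in the third equation) are bounded by $C\delta_2\int(\varphi^2+\psi^2+w^2+\psi_y^2+w_y^2)$ using (\ref{inequality12}). The terms containing $k_2(\bar v,\bar\phi)=O(\bar\phi_y^2+\bar\phi_{yy}+\bar v_y\bar\phi_y)$ are treated the same way. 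The coefficient of $\varphi_y\psi$ is $O(\delta_2)$, so we integrate by parts to move the derivative onto $\psi$, or simply bound $\delta_2\|\varphi_y\|\|\psi\|$. This accounts for $\|\varphi_y\|^2$ on the right-hand side. The electric force term $-\frac{\sigma_y}{\bar v+\varphi}\bar v\psi$ is not small, and we bound it crudely by $C(\|\psi\|^2+\|\sigma_y\|^2)$. This is exactly why $\int_0^t\|(\psi,\sigma_y)\|^2$ appears with an $O(1)$ constant in (\ref{estimate43}); it will later be controlled by Lemmas 4.3 and 4.4, since $\psi=\Psi_y$. The term $\bar\phi_y(\frac1{\bar v+\varphi}-\frac1{\bar v})\psi=O(\bar\phi_y\varphi\psi)$ is of order $\delta_2$.

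The nonlinear terms $f_1,f_2$ are in divergence form where it matters. For $\int f_1\bar v\psi$ we integrate by parts to get $\int\frac{\varphi}{\bar v(\bar v+\varphi)}(\cdots)(\bar v\psi)_y$. This is bounded by $C\|\varphi\|_{L^\infty}(\|\varphi\|^2+\|w\|^2+\|\psi_y\|^2)+C\delta_2\|\psi\|^2$, with $\|\varphi\|_{L^\infty}\le C\varepsilon_2^{1/2}$ by Sobolev and (\ref{assumption31}). The term $f_2 w$ is handled similarly, the divergence part being integrated by parts onto $w_y$. The quadratic parts give $C\varepsilon_2$ times the dissipative norms, the $\varphi_y$ contributions being absorbed into $C(\delta_2+\varepsilon_2)\|\varphi_y\|^2$. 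Terms with a small coefficient times $\psi_y^2$ or $w_y^2$ are absorbed into the left-hand side once $\delta_2,\varepsilon_2$ are small.

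The main obstacle is the coupling in the first two equations. The coefficient of $\varphi_y$ in (\ref{model41})$_2$ differs from the one balanced by the multiplier $k_3\varphi$ only through $k_2$ and $O(\delta_2)$ terms. We must check that after the cancellation no $O(1)$ term in $\varphi\psi$ or $\varphi^2$ survives without either a $\bar v_y$ factor or a favourable sign. Following \cite{KM}, we expect the term $-\frac s2(k_3)_y\varphi^2=\frac{s^3}{2}\bar v_y\varphi^2\ge0$ to have a good sign, since $s>0$ and $\bar v_y>0$. The remaining profile terms carry $\bar v_y$ weights; where necessary we bound them by $\delta_2\|\varphi\|^2$, which is allowed on the right-hand side.
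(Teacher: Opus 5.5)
Your proposal is correct and is essentially the paper's proof. The paper multiplies (\ref{model41})$_1$, (\ref{model41})$_2$, (\ref{model41})$_3$ by $\varphi$, $\frac{\bar v}{k_3(\bar v)}\psi$, $\frac{R}{k_3(\bar v)^2}w$, which are exactly your multipliers divided by $k_3(\bar v)$, and it sorts the remainders the same way (profile-derivative and $k_2$ terms are $O(\delta_2)$, the cubic terms from $f_1,f_2$ are small by Sobolev, and $\sigma_y\psi$ is bounded crudely by $C(\|\psi\|^2+\|\sigma_y\|^2)$).

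One small correction: $k_3'(\bar v)=\bar v^{-2}-s^2<0$, so the term $\frac{s}{2}(k_3)_y\varphi^2$ that your weight produces on the left-hand side is nonpositive rather than favourable. As you also note, it is $O(\delta_2)\|\varphi\|^2$ and therefore harmless, and the paper's unit weight on $\varphi$ avoids it altogether.
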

\noindent{\bf Proof.} Multiplying (\ref{model41})$_1$,
(\ref{model41})$_2$ and (\ref{model41})$_3$ by $\varphi,
\frac{\bar{v}}{k_3(\bar v)}\psi$ and $\frac{R}{k_3(\bar v)^2}w$,
respectively, the summation of three resultant equalities yields
\begin{eqnarray}
\big(\frac{1}{2}\varphi^2+\frac{\bar{v}}{2k_3(\bar{v})}\psi^2+\frac{R^2}{2(\gamma-1)k_3(\bar{v})^2}w^2\big)_t
-B_{6y}+\frac{\mu}{k_3(\bar{v})}\psi^2_y
+\frac{R\kappa}{\bar{v}k_3(\bar{v})^2}w^2_y=\sum_{i=1}^4g_i
,\label{equality431}
\end{eqnarray}
here
\begin{eqnarray*}
B_6&=&\frac{s}{2}\varphi^2+\frac{s\bar{v}}{2k_3(\bar{v})}\psi^2
+\frac{R^2s}{2(\gamma-1)k_3(\bar{v})^2}w^2+\frac{\mu}{k_3(\bar{v})}\psi\psi_y
-\frac{\varphi}{\bar{v}(\bar{v}+\varphi)}((k_3(\bar{v})+k_2(\bar{v},
\bar{\phi}))\varphi\notag\\
&&-Rw+\mu\psi_y)\frac{\bar{v}}{k_3(\bar{v})}\psi
+\frac{R\kappa}{\bar{v}k_3(\bar{v})^2}ww_y
-\frac{R\kappa}{\bar{v}k_3(\bar{v})^2}\bar{\theta}_y\varphi
w+\varphi\psi-\frac{R}{k_3(\bar{v})}w\psi\notag\\
&&+\frac{R\kappa}{\bar{v}^2k_3(\bar{v})^2}\bar{\theta}_y\varphi
w-\frac{R\kappa}{\bar{v}(\bar{v}+\varphi)k_3(\bar{v})^2}(w_y-\frac{1}{\bar{v}}\bar{\theta}_y\varphi)w,\\
g_1&=&-\frac{\mu}{\bar{v}}(\frac{\bar{v}}{k_3(\bar{v})})_y\psi\psi_y
+(\frac{R}{k_3(\bar{v})})_yw\psi-(\frac{R}{\bar{v}})_y\frac{\bar{v}}{k_3(\bar{v})}w\psi+\frac{k_2(\bar{v},
\bar{\phi})}{k_3(\bar v)}\varphi_y\psi+\frac{\bar{\phi}_y}{k_3(\bar{v})(\bar{v}+\varphi)}\varphi\psi\\
&&-(\frac{R^2s}{2(\gamma-1)k_3(\bar{v})^2})_yw^2-(\frac{s\bar{v}}{2k_3(\bar{v})})_y\psi^2
-\frac{\kappa}{\bar{v}}(\frac{R}{k_3(\bar{v})^2})_yww_y
+\frac{R\kappa}{\bar{v}^2}\bar{\theta}_y\varphi(\frac{1}{k_3(\bar{v})^2})_yw\\
&&-\frac{Rk_2(\bar{v},
\bar{\phi})}{k_3(\bar{v})^2}\psi_yw+\frac{1}{\bar{v}}\big(\frac{R\bar\theta-\mu\bar
u_y}{\bar
v}\varphi-Rw+\mu\psi_y\big)\bar{u}_y\frac{R}{k_3(\bar{v})^2}w+\big(\frac1{\bar
v^2}(R\bar\theta-\mu\bar u_y)\big)_y\frac{\bar
v}{k_3(\bar{v})}\varphi\psi\\
&&+\frac{R\kappa}{\bar{v}(\bar{v}+\varphi)}\big(w_y-\frac{1}{\bar{v}}\bar{\theta}_y\varphi\big)(\frac{1}{k_3(\bar{v})^2})_yw
+\frac{R\kappa}{\bar{v}^2k_3(\bar{v})^2}\bar{\theta}_y\varphi
w_y\\
&&-\frac{R}{\bar{v}k_3(\bar{v})^2(\bar{v}+\varphi)}((k_3(\bar{v})+k_2(\bar{v},
\bar{\phi}))\varphi-Rw+\mu\psi_y)\bar{u}_y\varphi w,\\
g_2&=&-\frac{\bar{v}}{k_3(\bar{v})(\bar{v}+\varphi)}\sigma_y\psi, \notag\\
g_3&=&-\frac{\varphi}{(\bar{v}+\varphi)k_3(\bar{v})}((k_3(\bar{v})+k_2(\bar{v},
\bar{\phi}))\varphi-Rw+\mu\psi_y)\psi_y+\frac{R\kappa}{\bar{v}(\bar{v}+\varphi)k_3(\bar{v})^2}\varphi\big(w_y
-\frac{1}{\bar{v}}\bar{\theta}_y\varphi\big)w_y\\
&&+\frac{R}{k_3(\bar{v})^2(\bar{v}+\varphi)}((k_3(\bar{v})+k_2(\bar{v},
\bar{\phi}))\varphi-Rw+\mu\psi_y)\psi_yw,\notag\\
g_4&=&-\frac{\varphi}{\bar{v}(\bar{v}+\varphi)}((k_3(\bar{v})+k_2(\bar{v},
\bar{\phi}))\varphi-Rw+\mu\psi_y)(\frac{\bar{v}}{k_3(\bar{v})})_y\psi.
\end{eqnarray*}
Then integration (\ref{equality431}) over $[0,t]\times\mathbb{R}$
yields
\begin{eqnarray}
\|(\varphi,\psi,w)\|^2+\int_0^t(\|\psi_y\|^2+\|w_y\|^2)\mathrm{d}\tau
\leq C\|(\varphi_0,\psi_0,w_0)\|^2
+C|\int_0^t\int_{\mathbb{R}}\sum_{i=1}^4g_i\mathrm{d}y\mathrm{d}\tau|.\label{equality43}
\end{eqnarray}
First, since
\begin{eqnarray*}
g_1&\sim&O(1)(\bar v_y\psi\psi_y+\bar
v_yw\psi+\bar\phi_y^2\varphi_y\psi+\bar\phi_{yy}\varphi_y\psi+\bar
v_y\bar\phi_y\varphi_y\psi+\bar\phi_y\varphi\psi+\bar v_yw^2+\bar
v_y\psi^2+\bar v_yww_y\\
&&+\bar v_y\bar\theta_y\varphi
w+\bar\phi_y^2\psi_yw+\bar\phi_{yy}\psi_yw+\bar
v_y\bar\phi_y\psi_yw+\bar u_y\varphi w+\bar u_y\psi_y w+\bar
\theta_y\varphi w_y\\
&&+(\bar\theta_y+\bar u_{yy}+\bar v_y\bar u_y)\varphi\psi),
\end{eqnarray*}
utilizing (\ref{inequality12}) and the Cauchy inequality  to get
\begin{eqnarray}
|\int_0^t\int_{\mathbb{R}}g_1\mathrm{d}y\mathrm{d}\tau|\leq
C\delta_2\int_0^t(\|\varphi\|_1^2+\|\psi\|_1^2+\|w\|_1^2)\mathrm{d}\tau.\label{estimate431}
\end{eqnarray}
Moreover, it is easy to see
\begin{eqnarray}
|\int_0^t\int_{\mathbb{R}}g_2\mathrm{d}y\mathrm{d}\tau|\leq
C\int_0^t(\|\psi\|^2+\|\sigma_y\|^2)\mathrm{d}\tau.\label{estimate432}
\end{eqnarray}
Next, from (\ref{inequality12}), (\ref{assumption31}), the Sobolev
inequality and the Cauchy inequality, one has
\begin{eqnarray}
|\int_0^t\int_{\mathbb{R}}g_3\mathrm{d}y\mathrm{d}\tau|&\leq&C|\int_0^t\int_{\mathbb{R}}(\varphi^2\psi_y+\varphi
w\psi_y+\varphi\psi_y^2+\varphi
w_y^2+\bar\theta_y\varphi^2w_y+w^2\psi_y+w\psi_y^2)\mathrm{d}y\mathrm{d}\tau|\notag\\
&\leq&C(\delta_2+\varepsilon_2)\int_0^t(\|\varphi\|^2+\|w\|_1^2+\|\psi_y\|^2)\mathrm{d}\tau,\label{estimate433}
\end{eqnarray}
and
\begin{eqnarray}
|\int_0^t\int_{\mathbb{R}}g_4\mathrm{d}y\mathrm{d}\tau|&\leq&C|\int_0^t\int_{\mathbb{R}}(\bar
v_y\varphi^2\psi+\bar v_y\varphi w\psi+\bar v_y\varphi \psi\psi_y)\mathrm{d}y\mathrm{d}\tau|\notag\\
&\leq&
C(\delta_2+\varepsilon_2)\int_0^t(\|\varphi\|^2+\|\psi\|_1^2+\|w\|^2)\mathrm{d}\tau.\label{estimate434}
\end{eqnarray}
Therefore, insertion (\ref{estimate431})-(\ref{estimate434}) into
(\ref{equality43}) leads to (\ref{estimate43}). This completes the
proof.

\begin{lema}
Under the assumptions of Proposition 3.2, there is a positive
constant $C$ such that it holds for any $t\in[0,T]$,
\begin{eqnarray}
\|\varphi_y(t)\|^2+\int_{0}^{t}\|\varphi_y(\tau)\|^2\mathrm{d}\tau&\le&
C(\|(\varphi_{0y},\psi_0)\|^2+\|\psi\|^2)+C\int_{0}^{t}
\|(\psi_y,w_y,\sigma_y)(\tau)\|^2\mathrm{d}\tau\notag\\
&&+C(\delta_2+\varepsilon_2)\int_0^t(\|\varphi\|^2+\|\psi\|^2+\|w\|^2+\|\psi_{yy}\|^2)\mathrm{d}\tau.\label{estimate44}
\end{eqnarray}
\end{lema}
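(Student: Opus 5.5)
This estimate is the standard Kawashima–Matsumura dissipation estimate for the specific volume. I will work with (\ref{model41}) and exploit the structure ``$\mu\varphi_{yt}$ + pressure'' in the momentum equation. Since $\varphi_t-s\varphi_y=\psi_y$, we have $\mu(\psi_y/\bar v)_y=\frac{\mu}{\bar v}(\varphi_{yt}-s\varphi_{yy})-\frac{\mu\bar v_y}{\bar v^2}\psi_y$.

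Substituting this into (\ref{model41})$_2$ gives an equation of the form
\[
\frac{\mu}{\bar v}(\varphi_y)_t-\frac{\mu s}{\bar v}\varphi_{yy}+\frac{k_3(\bar v)}{\bar v}\varphi_y=\psi_t-s\psi_y+\frac{R}{\bar v}w_y+\frac{\sigma_y}{\bar v+\varphi}+(\text{lower order})-f_1 .
\]
Here I have moved the $\varphi_y$-term containing $k_2$ to the right; it is $O(\delta_2)\varphi_y$.

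I multiply this by $\varphi_y$ and integrate over $[0,t]\times\mathbb{R}$. The left side produces $\frac{d}{dt}\int\frac{\mu}{2\bar v}\varphi_y^2$ and the good term $\int\frac{k_3}{\bar v}\varphi_y^2$, which has a positive lower bound by (\ref{priori41}) and $k_3>0$. The convective piece $-\frac{\mu s}{\bar v}\varphi_{yy}\varphi_y$ becomes a total derivative plus $O(\bar v_y)\varphi_y^2$, which is absorbed using $|\bar v_y|\le C\delta_2$.

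On the right, the term $\psi_t\varphi_y$ is handled by writing $\psi_t\varphi_y=(\psi\varphi_y)_t-\psi\varphi_{yt}$. Then I use $\varphi_{yt}=s\varphi_{yy}+\psi_{yy}$ and integrate by parts, so $-\psi\psi_{yy}\to\psi_y^2$ and $-s\psi\varphi_{yy}\to s\psi_y\varphi_y$. Combined with the term $-s\psi_y\varphi_y$, this leaves only $\psi_y^2$ together with boundary-in-time terms $\int\psi\varphi_y$ at times $0$ and $t$.

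By Young's inequality, the time-$t$ term $\int\psi\varphi_y$ is bounded by $\frac14\int\frac{\mu}{\bar v}\varphi_y^2+C\|\psi\|^2$. This is exactly why $\|\psi\|^2$ and $\|(\varphi_{0y},\psi_0)\|^2$ appear on the right of (\ref{estimate44}). The terms $\frac{R}{\bar v}w_y\varphi_y$ and $\frac{\sigma_y}{\bar v+\varphi}\varphi_y$ are bounded by $\eta\|\varphi_y\|^2+C_\eta\|(w_y,\sigma_y)\|^2$. The linear profile terms on the right of (\ref{model41})$_2$ all carry factors $\bar v_y,\bar\theta_y,\bar u_{yy},\bar\phi_y$, or $k_2=O(\delta_2)$. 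Hence they give $C\delta_2\int(\|\varphi\|^2+\|w\|^2+\|\varphi_y\|^2)$; the term $\bar\phi_y(\frac{1}{\bar v+\varphi}-\frac1{\bar v})\varphi_y$ is of this type.

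The main obstacle is the nonlinear term $f_1\varphi_y$. Expanding $f_1$, the dangerous piece is $\big(\frac{\mu\varphi\psi_y}{\bar v(\bar v+\varphi)}\big)_y\varphi_y$, which contains $\varphi\psi_{yy}\varphi_y$ and $\varphi_y^2\psi_y$. To treat it, I would first move the $\mu\varphi_y\varphi_t$-type part into the left-hand side, i.e.\ use the exact form $\mu(\psi_y/(\bar v+\varphi))_y=\mu(\ln(\bar v+\varphi))_{yt}-\dots$ as in \cite{KM}. Failing that, I would bound the pieces directly by $\|\varphi\|_{L^\infty}\|\psi_{yy}\|\|\varphi_y\|$ and $\|\psi_y\|_{L^\infty}\|\varphi_y\|^2$. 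Using (\ref{assumption31}) and Sobolev, this gives $C\varepsilon_2(\|\varphi_y\|^2+\|\psi_{yy}\|^2)$, and the $\|\psi_{yy}\|^2$ contribution is what appears with the small factor in (\ref{estimate44}). The remaining parts of $f_1$, namely $(\varphi^2)_y$ and $(\varphi w)_y$ times $\varphi_y$, are cubic and bounded by $C\varepsilon_2\|(\varphi,w,\varphi_y,w_y)\|^2$.

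Finally, I choose $\eta$ small and $\delta_2,\varepsilon_2$ small so that all $\varphi_y^2$ contributions are absorbed into the left side, which yields (\ref{estimate44}).
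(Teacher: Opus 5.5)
Your proposal is correct and is essentially the paper's proof. Like you, the paper multiplies the momentum equation of (\ref{model41}) by $-\bar v\varphi_y$ (your weight $1$ after dividing by $\bar v$ only removes the harmless $\bar v_y\psi(s\varphi_y+\psi_y)$ terms in $g_5$), uses $\varphi_t-s\varphi_y=\psi_y$ to produce $(\frac{\mu}{2}\varphi_y^2)_t$, converts $\psi_t\varphi_y$ into $(\psi\varphi_y)_t$ plus $\psi_y^2$, and bounds $f_1\varphi_y$ directly, so the $\ln(\bar v+\varphi)$ device is not needed. One small precision: $\|\psi_y\|_{L^\infty}$ is not controlled by (\ref{assumption31}) alone, so estimate $\int|\psi_y|\varphi_y^2\,\mathrm{d}y\le C\|\psi_y\|_1\|\varphi_y\|^2\le C\varepsilon_2^{1/2}(\|\varphi_y\|^2+\|\psi_y\|_1^2)$, which is still of the form allowed in (\ref{estimate44}).
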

\noindent{\bf Proof.} Multiplying (\ref{model41})$_1$ and
(\ref{model41})$_2$ by $\bar{v}\psi_y-\bar{v}_y\psi$ and
$-\bar{v}\varphi_y$, respectively, then the summation of two
resultant equalities leads to
\begin{eqnarray}
\big(\frac{\mu}{2}\varphi^2_y-\bar{v}\psi\varphi_y\big)_t
-\big(\frac{\mu}{2}\varphi_y^2-\bar{v}\varphi_t\psi\big)_y+k_3(\bar
v)\varphi_y^2
=\bar{v}\psi^2_y+\big(Rw_y+\frac{\bar{v}}{\bar{v}+\varphi}\sigma_y-f_1\bar{v}\big)\varphi_y+g_5,\label{equality441}
\end{eqnarray}
here
\begin{eqnarray*}
g_5=-\bar{v}_y\psi(s\varphi_y+\psi_y)-\frac{k_2(\bar
v,\bar\phi_y)}{k_3(\bar v)}\varphi_y^2
-\big(\frac{\mu}{\bar{v}}\big)_y\bar{v}\psi_y\varphi_y
-\big(\frac{k_3(\bar
v)}{\bar{v}}\big)_y\bar{v}\varphi\varphi_y+\big(\frac{R}{\bar{v}}\big)_y\bar{v}w\varphi_y
-\frac{\bar{\phi}_y}{\bar{v}+\varphi}\varphi\varphi_y.
\end{eqnarray*}
Further, integrating  (\ref{equality441}) over
$[0,t]\times\mathbb{R}$ and using the Cauchy inequality, one has
\begin{eqnarray}
\|\varphi_y(t)\|^2+\int_{0}^{t}\|\varphi_y(\tau)\|^2\mathrm{d}\tau&\le&
C(\|(\psi_{0y},\psi_0)\|^2+\|\psi\|^2)+C\int_{0}^{t}
\|(\psi_y,w_y,\sigma_y)(\tau)\|^2)\mathrm{d}\tau\notag\\
&&+C|\int_0^t\int_{\mathbb{R}}(g_5+f_1\varphi_y)\mathrm{d}y\mathrm{d}\tau|.\label{equality44}
\end{eqnarray}
Due to
\begin{eqnarray*}
f_1&\sim&O(1)(\varphi\varphi_y+\varphi\varphi_y(\bar \phi_{yy}+\bar
\phi_y^2+\bar v_y\bar\phi_y)+\varphi_y(w+\psi_y)+\bar
v_y(w+\psi_y)+\varphi^2\varphi_y+\varphi( w_y+\psi_{yy})\\
&&+(\bar\phi_{yy}+\bar \phi_y^2+\bar
v_y\bar\phi_y)\varphi^2\varphi_y+(\bar \phi_y\bar\phi_{yy}+\bar
v_y\bar\phi_{y}+\bar\phi_{yyy}+\bar v_y\bar\phi_{yy}+\bar \phi_y\bar
v_{yy}+\bar v_{y})\varphi^2) ,
\end{eqnarray*}
and
\begin{eqnarray*}
g_5&\sim&O(1)(\bar v_y\psi\varphi_y+\bar
v_y\psi\psi_y+(\bar\phi_y^2+\bar\phi_{yy}+\bar
v_y\bar\phi_y)\varphi_y^2+\bar v_y\varphi_y\psi_y+\bar
v_y\varphi\varphi_y+\bar v_yw\varphi_y+\bar\phi_y\varphi\varphi_y),
\end{eqnarray*}
then same as (\ref{estimate431}) and (\ref{estimate433}), we can
obtain
\begin{eqnarray}
|\int_0^t\int_{\mathbb{R}}g_5\mathrm{d}y\mathrm{d}\tau|\leq
C(\delta_2+\varepsilon_2)\int_0^t(\|\varphi\|_1^2+\|\psi\|_1^2+\|w\|^2)\mathrm{d}\tau,\label{estimate441}
\end{eqnarray}
and
\begin{eqnarray}
|\int_0^t\int_{\mathbb{R}}f_1\varphi_y\mathrm{d}y\mathrm{d}\tau|\leq
C(\delta_2+\varepsilon_2)\int_0^t(\|\varphi_y\|^2+\|\psi_y\|_1^2+\|w_y\|^2)\mathrm{d}\tau.\label{estimate442}
\end{eqnarray}
Therefore, from (\ref{equality44}), (\ref{estimate441}) and
(\ref{estimate442}), we immediately show (\ref{estimate44}). This
completes the proof.

\begin{lema}
Under the assumptions of Proposition 3.2, there is a positive
constant $C$ such that it holds for any $t\in[0,T]$,
\begin{eqnarray}
\|(\psi_y,w_y)(t)\|^2+\int_{0}^{t}\|(\psi_{yy},w_{yy})(\tau)\|^2\mathrm{d}\tau&\le&
C\|(\psi_{0y},w_{0y})\|^2+C\int_{0}^{t}
\|(\varphi_y,\psi_y,w_y)(\tau)\|^2\mathrm{d}\tau\notag\\
&&+C(\delta_2+\varepsilon_2)\int_0^t\|(\varphi,w,\varphi_y)(\tau)\|^2d\tau.\label{estimate45}
\end{eqnarray}
\end{lema}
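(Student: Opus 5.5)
We obtain the higher-order dissipation by testing the parabolic equations of (\ref{model41}) against second derivatives, following the standard Kawashima--Matsumura argument \cite{KM}. We multiply (\ref{model41})$_2$ by $-\psi_{yy}$ and (\ref{model41})$_3$ by $-\frac{\gamma-1}{R}w_{yy}$, add the two identities and integrate over $[0,t]\times\mathbb{R}$. The time-derivative terms produce $\frac12\frac{\mathrm{d}}{\mathrm{d}t}\|(\psi_y,w_y)\|^2$. The transport terms $-s\psi_y(-\psi_{yy})$ and $-sw_y(-w_{yy})$ are exact $y$-derivatives and drop out after integration. The principal viscous and heat-conductive parts give
\[
\frac{\mu}{\bar v}\psi_{yy}^2+\frac{\kappa(\gamma-1)}{R\bar v}w_{yy}^2\ge c(\psi_{yy}^2+w_{yy}^2),
\]
which uses (\ref{priori41}). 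The leftovers $(\mu/\bar v)_y\psi_y\psi_{yy}$ and $(\kappa/\bar v)_y w_y w_{yy}$ are bounded by $C\delta_2$ times the dissipation.

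The linear coupling terms are treated as follows:
\begin{itemize}
\item $\frac{1}{\bar v}k_3\varphi_y\psi_{yy}$, $\frac{R}{\bar v}w_y\psi_{yy}$ and $k_3\psi_y w_{yy}$ are estimated by Young's inequality as $\frac18\|(\psi_{yy},w_{yy})\|^2+C\|(\varphi_y,\psi_y,w_y)\|^2$. This is exactly the unweighted term $C\int_0^t\|(\varphi_y,\psi_y,w_y)\|^2$ allowed on the right-hand side of (\ref{estimate45}).
\item The profile-weighted terms $(\frac{R\bar\theta-\mu\bar u_y}{\bar v^2})_y\varphi\psi_{yy}$, $(R/\bar v)_yw\psi_{yy}$, $k_2\varphi_y\psi_{yy}$, $\bar\phi_y(\frac1{\bar v+\varphi}-\frac1{\bar v})\psi_{yy}$, the $\bar u_y$ terms in the $w$ equation, $k_2\psi_yw_{yy}$ and $(\frac{\kappa}{\bar v^2}\bar\theta_y\varphi)_yw_{yy}$ all carry a factor $|\bar v_y|,|\bar\phi_y|,\dots\le C\delta_2$ by (\ref{inequality11})--(\ref{inequality12}). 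They are bounded by $\frac18\|(\psi_{yy},w_{yy})\|^2+C\delta_2\|(\varphi,w,\varphi_y,\psi_y)\|^2$.
\item The electric force $-\frac{\sigma_y}{\bar v+\varphi}\psi_{yy}$ is simply bounded by $\frac18\|\psi_{yy}\|^2+C\|\sigma_y\|^2$. Then Lemma 4.1 gives $\|\sigma_y\|^2\le C\|\varphi\|^2$. Alternatively this term can be absorbed into the right-hand sides via Lemma 4.4, since only the combination matters when the lemmas are summed.
\end{itemize}

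The nonlinear terms are $f_1\psi_{yy}$ and $f_2w_{yy}$, together with $f_3$ where it enters through $\sigma$. Expanding $f_1$ gives products such as $\varphi\varphi_y$, $\varphi_y w$, $\varphi\psi_{yy}$, $\varphi w_y$ and $\varphi_y\psi_y$, each with a bounded coefficient. Using (\ref{assumption31}), the Sobolev inequality $\|\cdot\|_{L^\infty}\le C\|\cdot\|^{1/2}\|\cdot_y\|^{1/2}$ and the smallness of $\|(\varphi,\psi,w)\|_1$, each product is bounded by $C\varepsilon_2(\|(\varphi_y,\psi_y,w_y)\|^2+\|(\psi_{yy},w_{yy})\|^2)$. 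The same treatment applies to $f_2w_{yy}$.

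The main obstacle is the term $-(\frac{\kappa\varphi}{\bar v(\bar v+\varphi)}w_y)_y w_{yy}$ in $f_2$, and the analogous $\varphi\psi_{yy}$-type terms in $f_1$. Differentiating produces $\varphi w_{yy}^2$, which is fine since it is at most $C\varepsilon_2\|w_{yy}\|^2$. It also produces $\varphi_y w_y w_{yy}$, and here only $\varphi_y\in L^2$ is available. We handle it by bounding $\|w_y\|_{L^\infty}\le C\|w_y\|^{1/2}\|w_{yy}\|^{1/2}$, which gives
\[
\Big|\int\varphi_y w_yw_{yy}\,\mathrm{d}y\Big|\le C\|\varphi_y\|\,\|w_y\|^{1/2}\|w_{yy}\|^{3/2}\le C\varepsilon_2\|w_{yy}\|^2+C\varepsilon_2\|w_y\|^2,
\]
after using $\|\varphi_y\|\le\varepsilon_2^{1/2}$. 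The same device handles $\varphi_y\psi_y\psi_{yy}$.

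Collecting all these bounds and choosing $\delta_2,\varepsilon_2$ small enough, the $\|(\psi_{yy},w_{yy})\|^2$ contributions on the right are absorbed into the left. Integrating in time then yields (\ref{estimate45}).
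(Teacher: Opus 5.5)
Your proposal follows the paper's proof. It uses the same multipliers $-\psi_{yy}$ and $-w_{yy}$ (your extra factor $\frac{\gamma-1}{R}$ is cosmetic), Young's inequality for the linear couplings, smallness of the profile derivatives for $g_6$, and Sobolev-type bounds for $f_1\psi_{yy}$ and $f_2w_{yy}$. Your Gagliardo--Nirenberg treatment of $\varphi_y w_y w_{yy}$ is more explicit than the paper's. Two small corrections there: it gives $C\varepsilon_2^{1/2}$ rather than $C\varepsilon_2$, since $\|\varphi_y\|\le\varepsilon_2^{1/2}$, which is harmless; and $f_3$ plays no role, because it only enters the Poisson equation.

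The one step that does not yield (\ref{estimate45}) as stated is the electric force term $\frac{\sigma_y}{\bar v+\varphi}\psi_{yy}$. You bound it by $\frac18\|\psi_{yy}\|^2+C\|\sigma_y\|^2$ and then invoke Lemma 4.1. This leaves $C\int_0^t\|\varphi\|^2\,\mathrm{d}\tau$ with an $O(1)$ constant. The right-hand side of (\ref{estimate45}) admits $\|\varphi\|^2$ only with the small factor $\delta_2+\varepsilon_2$.

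Your fallback is to keep $C\int_0^t\|\sigma_y\|^2$ and close with (\ref{estimate42}) when the lemmas are summed. That is enough for Proposition 3.2, but it proves a weaker lemma. The paper has the same term in (\ref{equality451}) and drops it without comment in (\ref{equality45}), so this gap is shared.

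To get the stated form, test (\ref{model41})$_4$ with $-\sigma_{yy}$:
\begin{itemize}
\item On the right, $\int\bar ve^{\bar\phi}\sigma\sigma_{yy}\,\mathrm{d}y$ equals $-\int\bar ve^{\bar\phi}\sigma_y^2\,\mathrm{d}y$ up to an $O(\delta_2)$ error. This produces a dissipative $\|\sigma_y\|^2$ on the left.
\item Integrate the source term by parts rather than pairing it with $\sigma_{yy}$ directly:
\[
\int\varphi e^{\bar\phi}\sigma_{yy}\,\mathrm{d}y=-\int e^{\bar\phi}(\varphi_y+\bar\phi_y\varphi)\sigma_y\,\mathrm{d}y .
\]
\item Use $\|\sigma\|\le C\|\varphi\|$ from Lemma 4.1, and bound the nonlinear terms (such as $\varphi_y\sigma_y\sigma_{yy}$ and $f_3\sigma_{yy}$) by smallness.
\end{itemize}
This gives
\[
\|\sigma_y\|^2+\|\sigma_{yy}\|^2\le C\|\varphi_y\|^2+C(\delta_2+\varepsilon_2^{1/2})\|\varphi\|^2,
\]
which fits the right-hand side of (\ref{estimate45}).
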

\noindent{\bf Proof.} Multiplying (\ref{model41})$_2$ and
(\ref{model41})$_3$ by $-\psi_{yy}$ and $-w_{yy}$, respectively, the
summation of two resultant equalities yields
\begin{eqnarray}
&&\big(\frac{1}{2}\psi^2_y+\frac{R}{2(\gamma-1)}w^2_y\big)_t-\big(\psi_t\psi_y-\frac{s}{2}\psi^2_y
-\frac{Rs}{2(\gamma-1)}w^2_y+\frac{R}{\gamma-1}w_tw_y\big)_y+\frac{\mu}{\bar{v}}\psi_{yy} +\frac{\kappa}{\bar{v}}w^2_{yy}\notag\\
&=&-\frac{k_3(\bar
v)}{\bar{v}}\varphi_y\psi_{yy}-\frac{R}{\bar{v}}w_y\psi_{yy}+k_3(\bar
v)\psi_yw_{yy}+\frac{\sigma_y}{\bar{v}+\varphi}\psi_{yy}
+g_6-f_1\psi_{yy}-f_2w_{yy},\label{equality451}
\end{eqnarray}
and
\begin{eqnarray*}
g_6&=&-(\frac{\mu}{\bar{v}})_y\psi_y\psi_{yy}-(\frac{k_3(\bar
v)}{\bar{v}})_y\varphi\psi_{yy}
+(\frac{R}{\bar{v}})_yw\psi_{yy}+\bar{\phi}_y\big(\frac{1}{\bar{v}+\varphi}-\frac{1}{\bar{v}}\big)\psi_{yy}
-(\frac{\kappa}{\bar{v}})_yw_yw_{yy}\\
&&+(\frac{\kappa}{\bar{v}^2}\bar{\theta}_y\varphi)_yw_{yy}
-\frac{1}{\bar{v}}\big(\frac{R\bar\theta-\mu\bar u_y}{\bar
v}\varphi-Rw+\mu\psi_y\big)\bar{u}_yw_{yy}-\frac{k_2(\bar
v,\bar\phi_y)}{\bar{v}}\varphi_y\psi_{yy}+k_2(\bar
v,\bar\phi_y)\psi_yw_{yy}.
\end{eqnarray*}
Then integrating  (\ref{equality451}) over $[0,t]\times\mathbb{R}$
and using the Cauchy inequality, we have
\begin{eqnarray}
\|(\psi_y,w_y)(t)\|^2+\int_{0}^{t}\|(\psi_{yy},w_{yy})(\tau)\|^2\mathrm{d}\tau&\le&
C\|(\psi_{0y},w_{0y})\|^2+C\int_{0}^{t}
\|(\varphi_y,\psi_y,w_y)(\tau)\|^2\mathrm{d}\tau\notag\\
&&+C|\int_0^t\int_{\mathbb{R}}(g_6+f_1\psi_{yy}+f_2w_{yy})\mathrm{d}y\mathrm{d}\tau|.\label{equality45}
\end{eqnarray}
Moreover, we also note that
\begin{eqnarray*}
f_2&\sim&O(1)(\varphi\psi_y(\bar \phi_{yy}+\bar \phi_y^2+\bar
v_y\bar\phi_y)+\psi_y(\varphi+w+\psi_y)+\varphi^2(\bar u_y+\bar
\phi_y^2\bar u_y+\bar u_y\bar\phi_{yy}+\bar v_y\bar
u_y\bar\phi_{y}+\bar\theta_{yy}\\
&&+\bar v_y\bar\theta_y)+\bar u_y\varphi (w+\psi_y)+(\varphi_y+\bar
v_y)w_y+(\bar\theta_y+w_y)\varphi\varphi_y+\bar
v_y\bar\theta_y\varphi+\bar\theta_y\varphi^2\varphi_y+\varphi
w_{yy}) ,
\end{eqnarray*}
and
\begin{eqnarray*}
g_6&\sim&O(1)(\bar v_y\psi_y\psi_{yy}+(\bar
v_y+\bar\phi_y)\varphi\psi_{yy}+\bar v_yw\psi_{yy}+\bar
v_yw_yw_{yy}+\bar\theta_y\varphi_yw_{yy}+(\bar\theta_{yy}+\bar
v_y\bar\theta_y+\bar u_y\\
&&+\bar u_y^2)\varphi w_{yy}+\bar
u_y(w+\psi_y)w_{yy}+(\bar\phi_y^2+\bar\phi_{yy}+\bar
v_y\bar\phi_y)\varphi_y\psi_{yy}+(\bar\phi_y^2+\bar\phi_{yy}+\bar
v_y\bar\phi_y)\psi_yw_{yy}).
\end{eqnarray*}
Furthermore, similar to (\ref{estimate431}), (\ref{estimate433}) and
(\ref{estimate434}), one gets
\begin{eqnarray}
|\int_0^t\int_{\mathbb{R}}g_6\mathrm{d}y\mathrm{d}\tau|\leq
C\delta_2\int_0^t(\|\varphi\|_1^2+\|w\|_2^2+\|\varphi_y\|^2+\|\psi_y\|_1^2)\mathrm{d}\tau,\label{estimate451}
\end{eqnarray}
\begin{eqnarray}
|\int_0^t\int_{\mathbb{R}}f_1\psi_{yy}\mathrm{d}y\mathrm{d}\tau|\leq
C\varepsilon_2\int_0^t(\|\varphi_y\|^2+\|\psi_y\|_1^2+\|w_y\|^2)\mathrm{d}\tau,\label{estimate452}
\end{eqnarray}
and
\begin{eqnarray}
|\int_0^t\int_{\mathbb{R}}f_2w_{yy}\mathrm{d}y\mathrm{d}\tau|\leq
C(\delta_2+\varepsilon_2)\int_0^t(\|\varphi\|_1^2+\|\psi_y\|_1^2+\|w_y\|_1^2)\mathrm{d}\tau.\label{estimate453}
\end{eqnarray}
Hence, putting (\ref{estimate451})-(\ref{estimate453}) into
(\ref{equality45}), we immediately have (\ref{estimate45}). This
completes the proof.

\

\noindent {\bf Proof of Proposition~3.2.} Summing up the estimates
(\ref{estimate41}), (\ref{estimate42}), (\ref{estimate43}),
(\ref{estimate44}) and (\ref{estimate45}), and using Lemma 4.1 and
(\ref{estimate4146}), we immediately have (\ref{inequality33}). This
completes the proof.

\

\noindent {\bf Acknowledgements:}  The research is supported in part
by the National Natural Science Foundation of China (Grant Nos. 12471210, 12331007 and 12671260).

\end{document}